\theoremstyle{plain}
\newtheorem{theorem}{Theorem}
\newtheorem{prop}[theorem]{Proposition}
\newtheorem{lemma}[theorem]{Lemma}
\newtheorem{defi}[theorem]{Definition}
\newtheorem{conj}[theorem]{Conjecture}
\theoremstyle{definition}
\newtheorem{remark}[theorem]{Remark}
\newtheorem{example}[theorem]{Example}
\numberwithin{equation}{section}
\numberwithin{theorem}{section}
\renewcommand{\b}{\beta}
\newcommand{\eps}{\varepsilon}
\newcommand{\sign}{\mathrm{sign}\text{ }}
\newcommand{\dd}[0]{\mathrm{d}}
\newcommand{\ud}[0]{\,\mathrm{d}}
\newcommand{\black}{\color{black}}
\newcommand{\vertiii}[1]{{\left\vert\kern-0.25ex\left\vert\kern-0.25ex\left\vert #1 
    \right\vert\kern-0.25ex\right\vert\kern-0.25ex\right\vert}}
\begin{document}

\title[UMD spaces and weak differential subordination]
{Fourier multipliers and weak differential subordination 
 of martingales  in UMD Banach spaces}

\author{Ivan Yaroslavtsev}
\address{Delft Institute of Applied Mathematics\\
Delft University of Technology \\ P.O. Box 5031\\ 2600 GA Delft\\The
Netherlands}
\email{I.S.Yaroslavtsev@tudelft.nl}

\begin{abstract}
In this paper we introduce the notion of weak differential subordination for 
martingales and  show that a Banach space $X$ is a UMD Banach space if and 
only if for all $p\in 
(1,\infty)$ and all purely discontinuous $X$-valued martingales $M$ and $N$ 
such that $N$ is weakly differentially subordinated to $M$, one has the estimate 
$\mathbb E \|N_{\infty}\|^p \leq C_p\mathbb E \|M_{\infty}\|^p$. As a corollary 
we derive the sharp estimate for the norms of a broad class of even Fourier 
multipliers, which includes e.g.\ the second order Riesz transforms. 
\end{abstract}

\keywords{Fourier multipliers, differential subordination, weak differential 
subordination, UMD Banach spaces, Burkholder function, sharp estimates, Hilbert 
transform, stochastic integration, L\'evy process, purely discontinuous 
martingales}

\subjclass[2010]{42B15, 60G46 Secondary: 60B11, 60G42, 60G44, 60G51}

\maketitle

\section{Introduction}

Applying stochastic techniques to Fourier multiplier theory has a long history 
(see e.g.\ \cite{BBB,BB,BanWang95,Bour83,Burk83,Gar85,GM-SS,Mcc84}). It 
 turns out that 
the boundedness of certain Fourier multipliers  with values in a Banach space $X$
is equivalent to this Banach space being in a special class, namely 
in the class of UMD Banach spaces. 
Burkholder in \cite{Burk83} and Bourgain in \cite{Bour83} showed that the 
Hilbert transform is bounded on $L^p(\mathbb R; X)$ for $p\in (1,\infty)$ if and 
only if $X$ is UMD. The same type of assertion can be proven for the 
Beurling-Ahlfors transform, see the paper \cite{GM-SS} by Geiss, Montgomery-Smith and 
Saksman. Examples of UMD spaces include the reflexive range of $L^q$-, Sobolev 
and Besov spaces. 

 A more general class of Fourier multiplier has been considered in recent 
works of Ba\~{n}uelos and Bogdan \cite{BB} and Ba\~{n}uelos, Bielaszewski and Bogdan \cite{BBB}.
 They derive sharp estimates for the norm of a Fourier multiplier with symbol
\begin{equation}\label{eq:introsymbol}
 m(\xi) = \frac{\int_{\mathbb R^d} (1-\cos \xi \cdot z)\phi(z)V(\dd z) + 
\frac 12\int_{S^{d-1}}(\xi \cdot \theta)^{2}\psi(\theta)\mu(\dd\theta)}{\int_{\mathbb 
R^d} (1-\cos \xi \cdot z)V(\dd z)+ \frac 12\int_{S^{d-1}}(\xi \cdot 
\theta)^{2}\mu(\dd\theta)},\;\; \xi \in \mathbb R^d,
\end{equation}
on $L^p(\mathbb R^d)$.
Here we will extend their result to $L^p(\mathbb R^d; X)$ for UMD spaces $X$. 
 More precisely, we will show that a Fourier multiplier $T_m$ with a symbol of the form
\eqref{eq:introsymbol} is bounded on $L^p(\mathbb R^d; X)$ if $V$ is a L\'evy 
measure, $\mu$ is a Borel positive measure, $|\phi|, |\psi|\leq 1$, and that 
then  the norm of $T_m$
does not exceed the UMD$_p$ constant of~$X$. In Subsection 
\ref{sec:examples}, several examples of symbols $m$ of the form \eqref{eq:introsymbol}
are given, and we will see that for 
some particular symbols $m$ the norm of $T_m$ equals the UMD constant.

\medskip

To prove the generalization of the results in \cite{BBB,BB} we will need 
additional geometric properties of a UMD Banach space. In the 
fundamental paper \cite{Burk86}, Burkholder showed that a Banach space $X$ is UMD 
if and only if for some $\beta>0$ there exists a zigzag-concave function 
$U:X\times X \to \mathbb R$ (i.e.,\ a function $U$ such that $U(x+z, y+\eps z)$ 
is concave in $z$ for any sign $\eps$ and for any $x, y\in X$) such that $U(x, 
y) \geq \|y\|^p - \beta^p\|x\|^p$ for all $x,y \in X$. Such a function $U$ is 
called a~{\it Burkholder function}. 
 In this situation, we can in fact take $\beta$  equal to the 
UMD$_p$  constant of $X$
(see Subsection~\ref{subsec:UMD} and Theorem \ref{thm:Burkholder}).
By exploiting appropriate Burkholder functions $U$ 
one can prove a wide variety of interesting results
(see \cite{BanJan08,BanOs13,BanWang95,BanWang96,Burk84,Burk85a,Wang} and the works 
\cite{Os11a,Os12a,Os12b,Os12c,Os13a,Os08a,Os08b,Os15a,Os12,Os07a} by Os{\c{e}}kowski). 
For our purposes the 
following result due to Burkholder \cite{Burk84} (for the scalar case) and Wang \cite{Wang} (for the 
Hilbert space case) is of special importance: 
\begin{theorem}\label{thm:introBurWan}
 Let $H$ be a Hilbert space, $(d_n)_{n\geq 0}$, $(e_n)_{n\geq 0}$ be two 
$H$-valued martingale difference sequences such that $\|e_n\|\leq \|d_n\|$ a.s.\ 
for all $n\geq 0$. Then for each $p\in (1,\infty)$,
 \[
  \mathbb E \Bigl\|\sum_{n\geq 0} e_n\Bigr\|^p\leq (p^*-1)^p \mathbb 
E\Bigl\|\sum_{n\geq 0} d_n\Bigr\|^p.
 \]
\end{theorem}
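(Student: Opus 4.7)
The plan is to adapt Burkholder's Bellman function method, following Wang's extension to Hilbert-valued martingales. The strategy is to construct an explicit function $U : H \times H \to \R$ with two key features: a pointwise lower bound forcing the desired $L^p$ inequality, and a ``zigzag'' super-martingale-like property along differentially subordinated pairs. Once $U$ is in hand, the theorem drops out of a single telescoping argument.

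First, I would introduce the Burkholder--Wang function
\[
U(x,y) = \alpha_p \bigl(\|y\| - (p^*-1)\|x\|\bigr)\bigl(\|x\|+\|y\|\bigr)^{p-1}, \qquad x,y\in H,
\]
for the correct constant $\alpha_p = p(1-1/p^*)^{p-1}$, and verify the majorization
\[
U(x,y) \geq \|y\|^p - (p^*-1)^p\|x\|^p, \qquad x,y\in H.
\]
This reduces to the two-variable scalar inequality in $(\|x\|,\|y\|)$, which Burkholder established in \cite{Burk84}; only the norms enter, so the Hilbert-valued version is immediate.

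The crux is the following local estimate: for all $x,y,h,k\in H$ with $\|k\|\leq \|h\|$,
\[
U(x+h, y+k) \leq U(x,y) + \langle A(x,y), h\rangle + \langle B(x,y), k\rangle,
\]
where $A(x,y),B(x,y)\in H$ are chosen so that the right-hand side is the ``tangent plane'' to $U$ at $(x,y)$ along the directions allowed by the subordination condition. Concretely, $A$ and $B$ are expressible in terms of the unit vectors $x/\|x\|$, $y/\|y\|$ and polynomial factors in $\|x\|,\|y\|$. In the scalar case this is Burkholder's classical ``zigzag concavity''; in the Hilbert case one obtains it by writing $\|x+th\|^2 = \|x\|^2 + 2t\langle x,h\rangle + t^2\|h\|^2$, computing the Hessian of $U$ as a function of the two-dimensional slice, and using $\|k\|\leq\|h\|$ to dominate the quadratic remainder. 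Singularities at $x=0$ or $y=0$ can be handled either by a limiting argument or by patching $U$ with its values on these axes.

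Granting the local estimate, the proof finishes cleanly. Set $M_n := \sum_{k=0}^n d_k$ and $N_n := \sum_{k=0}^n e_k$. Applying the inequality with $(x,y)=(M_n,N_n)$, $(h,k)=(d_{n+1},e_{n+1})$ (which satisfy $\|k\|\leq\|h\|$ a.s.) and taking conditional expectation with respect to $\F_n$, the linear terms vanish since $d_{n+1},e_{n+1}$ are $\F_n$-martingale differences, yielding
\[
\E\bigl[ U(M_{n+1}, N_{n+1}) \mid \F_n\bigr] \leq U(M_n, N_n).
\]
Iterating gives $\E U(M_n,N_n) \leq U(0,0) = 0$, and combining with the majorization proves $\E\|N_n\|^p \leq (p^*-1)^p \E\|M_n\|^p$; Fatou's lemma and $L^p$-boundedness let us pass to the limit.

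The main obstacle is the local ``zigzag'' estimate in the Hilbert setting: outside a two-dimensional subspace the function $U$ depends on $(x,y)$ only through their norms and inner product, so the geometry is genuinely richer than the scalar case, and the choice of supporting functionals $A(x,y),B(x,y)$ requires care when $x$ and $y$ are nearly orthogonal or nearly aligned. Once this is established, all remaining steps are routine martingale manipulations.
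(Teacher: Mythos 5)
Your proposal is correct and is precisely the Burkholder--Wang Bellman-function argument that this statement rests on in the paper: the paper does not reprove Theorem \ref{thm:introBurWan} but quotes it from \cite{Burk84} and \cite{Wang}, and the explicit function $U(x,y)=p(1-1/p^*)^{p-1}(\|y\|-(p^*-1)\|x\|)(\|x\|+\|y\|)^{p-1}$ together with the Hessian inequality you need for the ``zigzag'' step is exactly what is recorded in Remark \ref{rem:C_p}. The only small imprecision is the base case: since $M_0=d_0$ and $N_0=e_0$ are not zero, the iteration terminates at $\mathbb E\,U(M_0,N_0)$ rather than $U(0,0)$, but this is still $\le 0$ because $\|e_0\|\le\|d_0\|$ and $p^*-1\ge 1$ make the factor $\|y\|-(p^*-1)\|x\|$ nonpositive there.
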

Here and in the sequel $p^* = \max(p, p')$, where $\frac 1p + 
\frac 1{p'}=1$. This result cannot be generalized beyond the 
Hilbertian setting; see \cite[Theorem~3.24(i)]{Os12} and \cite[Example 
4.5.17]{HNVW1}. 
In the present 
paper we will show the following UMD variant of Theorem 
\ref{thm:introBurWan}:
\begin{theorem}\label{thm:introweakdiffsub}
 Let $X$ be a UMD space, $(d_n)_{n\geq 0}$, $(e_n)_{n\geq 0}$ be two $X$-valued 
martingale difference sequences, $(a_n)_{n\geq 0}$ be a scalar-valued adapted 
sequence such that $|a_n|\leq 1$ and $e_n = a_n d_n$ for all $n\geq 0$. Then for 
each $p\in (1,\infty)$
 \[
  \mathbb E \Bigl\|\sum_{n\geq 0} e_n\Bigr\|^p\leq \beta^p_{p,X} \mathbb 
E\Bigl\|\sum_{n\geq 0} d_n\Bigr\|^p,
 \]
\end{theorem}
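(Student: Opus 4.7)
The plan is to reduce the statement to the classical UMD inequality for $\{-1,+1\}$-valued predictable transforms via a two-step procedure: a randomization replacing the adapted scalars $a_n$ by $\{-1,+1\}$-valued variables, followed by a filtration enlargement making these variables predictable. To start, I would enlarge the probability space by an independent sequence $(U_n)_{n\ge 0}$ of $[0,1]$-uniform random variables and define
$$\xi_n := \one_{\{U_n \le (1+a_n)/2\}} - \one_{\{U_n > (1+a_n)/2\}},$$
which takes values in $\{-1,+1\}$ (well-defined since $|a_n|\le 1$), is adapted to $\F'_n := \F_n \vee \sigma(U_1,\ldots,U_n)$, and satisfies $\E[\xi_n\mid\F_\infty] = a_n$. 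Since $(d_n)_{n\ge 0}$ is $\F_\infty$-measurable,
$$\sum_{n\ge 0} e_n \;=\; \sum_{n\ge 0} a_n d_n \;=\; \E\Bigl[\sum_{n\ge 0}\xi_n d_n \,\Big|\, \F_\infty\Bigr],$$
so conditional Jensen reduces matters to the estimate $\E\|\sum_n \xi_n d_n\|^p \le \beta_{p,X}^p\,\E\|\sum_n d_n\|^p$.

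Next I would consider the further enlarged filtration $\G_n := \F'_n \vee \sigma(\xi_{n+1})$, for which $(\xi_n)$ is predictable and $\{-1,+1\}$-valued by construction, and verify that $(d_n)$ remains a $\G$-martingale difference sequence. The independence of $(U_n)$ from $\F_\infty$ gives $\E[d_n\mid\F'_{n-1}] = 0$, and the martingale difference hypothesis on $(e_n)$ gives $\E[a_n d_n\mid\F'_{n-1}] = \E[a_n d_n\mid\F_{n-1}] = 0$. For any bounded $g:\{-1,+1\}\to\R$, the construction of $\xi_n$ yields
$$\E[g(\xi_n)\mid\F_n,\,U_1,\ldots,U_{n-1}] \;=\; \tfrac{g(1)+g(-1)}{2} + \tfrac{g(1)-g(-1)}{2}\,a_n,$$
an affine function of $a_n$; combining this with the tower property and the two vanishing conditional expectations gives $\E[g(\xi_n)\,d_n\mid\F'_{n-1}] = 0$. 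Applying this with $g$ the indicator of $\{+1\}$ and then of $\{-1\}$ shows $\E[d_n\mid\G_{n-1}] = \E[d_n\mid\F'_{n-1}\vee\sigma(\xi_n)] = 0$, as required.

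Finally, the UMD$_p$ property of $X$ applied in the filtration $(\G_n)$ to the $\G$-predictable $\{-1,+1\}$-valued multiplier $(\xi_n)$ and the $\G$-martingale $\sum d_n$ yields $\E\|\sum_n \xi_n d_n\|^p \le \beta_{p,X}^p\,\E\|\sum_n d_n\|^p$, and combining with the Jensen estimate from the first step completes the proof. The main conceptual obstacle is that $(a_n)$ is only adapted rather than predictable, so the standard predictable-transform form of UMD is not directly applicable; this is resolved by the two complementary roles of the randomization and of the $\{-1,+1\}$-valuedness of $\xi_n$: the former replaces $a_n$ by a variable whose conditional expectation reproduces $a_n$, while the latter makes $a\mapsto \E[g(\xi_n)\mid a_n = a]$ affine, which is what allows the filtration enlargement to preserve the martingale property of $(d_n)$.
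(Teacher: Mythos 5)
Your argument is correct for real-valued $(a_n)$ and takes a genuinely different route from the paper's. The paper obtains Theorem \ref{thm:introweakdiffsub} as a special case of Theorem \ref{thm:forseq}, whose proof constructs a Burkholder function $U$ that is zigzag-concave in the strengthened sense ($|\eps|\leq 1$ rather than $|\eps|=1$, Theorem \ref{thm:Burkholder}), passes to the biconcave $V$, smooths the martingales so that Alexandrov a.e.\ differentiability applies, and runs a one-step induction. You instead reduce to the classical predictable martingale transform inequality by randomization: the conditional law of your $\xi_n$ depends on $a_n$ only through the affine quantity $(1\pm a_n)/2$, so verifying that $(d_n)$ remains a martingale difference sequence for the enlarged filtration uses exactly the two hypotheses $\mathbb E[d_n\mid\mathcal F_{n-1}]=0$ and $\mathbb E[a_n d_n\mid\mathcal F_{n-1}]=0$ and nothing more; this is the crux and it checks out. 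Your proof is much shorter and avoids the Burkholder-function machinery entirely; the paper's route, on the other hand, is what carries over to the purely discontinuous continuous-time statement (Theorem \ref{thm:weakdiffsubproc}) and produces the function $U$ that the rest of the paper needs. One small point to make explicit: your last step applies the UMD inequality to a \emph{predictable random} $\{-1,+1\}$-valued multiplier, which is not literally the paper's definition (deterministic signs) but is the standard martingale transform theorem \cite[Theorem 4.2.25]{HNVW1} with the same constant $\beta_{p,X}$; the paper itself invokes this result elsewhere, so you should simply cite it.

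The one genuine limitation is the scalar field. The paper allows $\mathbb K=\mathbb C$ (see Section 3 and Proposition \ref{thm:discweakdiffsub}, where $a_n$ is $\mathbb K$-valued with $|a_n|\leq 1$), and your randomization does not extend to complex $a_n$ in the closed unit disk. The natural modification (write $a_n$ as a convex combination of two unimodular points and randomize between them) destroys the key affineness: the conditional probability $\mathbb P(\xi_n\in S\mid\mathcal F_\infty)$ is real-valued, and a real-valued affine function $\alpha+\beta a$ of a complex parameter ranging over the disk forces $\beta=0$; any genuine dependence must therefore involve $\bar a_n$ or $|a_n|$, and then $\mathbb E[g(\xi_n)d_n\mid\mathcal F_{n-1}']=0$ would require centering conditions such as $\mathbb E[\bar a_n d_n\mid\mathcal F_{n-1}]=0$ that are not among the hypotheses. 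So for complex scalars the Burkholder-function argument of Theorem \ref{thm:forseq} is still needed.
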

\noindent where $\beta_{p,X}$ is the UMD$_p$-constant of $X$ (notice that Burkholder proved the identity $\beta_{p,H} = p^*-1$ for a Hilbert space $H$, see \cite{Burk84}). 
Theorem \ref{thm:introweakdiffsub} generalizes 
a famous Burkholder's result \cite[Theorem 2.2]{Burk81} on martingale 
transforms, where $(a_n)_{n\geq 0}$ was supposed to be predictable.
The main tool for proving Theorem \ref{thm:introweakdiffsub} is a Burkholder 
function with a stricter zigzag-concavity: now we also require $U(x+z, y+\eps 
z)$ to be concave in $z$ for any $\eps$ such that $|\eps|\leq 1$. In the 
finite dimensional case one gets it for free thanks to the existence of an explicit formula of $U$ (see Remark \ref{rem:C_p} and \cite{Wang}). Here we show the existence of such 
a Burkholder function in infinite dimension.

\smallskip

For the applications of our abstract results to the theory of Fourier multipliers 
we extend Theorem 
\ref{thm:introweakdiffsub} to the continuous time setting. Namely, we show an 
analogue of Theorem \ref{thm:introweakdiffsub} for purely discontinuous martingales (i.e.\ martingales which quadratic variations are pure jump processes, see Subsection \ref{subsec:contcase}). 

\smallskip

An extension of Theorem \ref{thm:introweakdiffsub} to general continuous-time 
martingales is shown in the 
paper \cite{Y17MartDec}. 
Nevertheless, the sharp estimate in this extension for the case of continuous martingales remains an open problem. This problem 
is in fact of interest in Harmonic Analysis. If true, this sharp estimate 
can be used to 
study a larger class of multipliers, including the Hilbert transform~$\mathcal 
H_X$. Garling in \cite{Gar85} proved that $$ \|\mathcal H_X\|_{\mathcal 
L(L^p(\mathbb R; X))}\leq \beta^2_{p,X},$$
 and it is a long-standing open problem (see \cite[pp.496--497]{HNVW1})
to prove a linear estimate of the form 
$$\|\mathcal H_{X}\|_{\mathcal 
L(L^p(\mathbb R; X))} \leq C\beta_{p,X}$$ for some constant~$C$.
Here we will show that the latter estimate would indeed follow if
one can show the existence of a Burkholder function with certain additional properties. 
At present, the existence of such Burkholder functions is known
only in the Hilbert space case (see Remark \ref{rem:C_p}).

\medskip

\emph{Acknowledgment} -- The author would like to thank Mark Veraar for 
inspiring discussions and helpful comments, and Jan van Neerven and Emiel Lorist 
for careful reading of parts of this paper and useful suggestions. The author thanks the anonymous referee for his/her valuable comments.

\section{Preliminaries}
\subsection{UMD Banach spaces}\label{subsec:UMD}\nopagebreak
A Banach space $X$ is called a {\it UMD space} if for some (or equivalently, for 
all)
$p \in (1,\infty)$ there exists a constant $\beta>0$ such that
for every $n \geq 1$, every martingale
difference sequence $(d_j)^n_{j=1}$ in $L^p(\Omega; X)$, and every scalar-valued 
sequence
$(\varepsilon_j)^n_{j=1}$ such that $|\varepsilon_j|=1$ for each $j=1,\ldots,n$
we have
\[
\Bigl(\mathbb E \Bigl\| \sum^n_{j=1} \varepsilon_j d_j\Bigr\|^p\Bigr )^{\frac 
1p}
\leq \beta \Bigl(\mathbb E \Bigl \| \sum^n_{j=1}d_j\Bigr\|^p\Bigr )^{\frac 1p}.
\]
The least admissible constant $\beta$ is denoted by $\beta_{p,X}$ and is called 
the {\it UMD$_p$~constant} or, if the value of $p$ is understood, the {\em UMD constant}, of $X$.
It is well-known that UMD spaces obtain a large number 
of good properties, such as being reflexive. Examples of UMD 
spaces include all finite dimensional spaces and the reflexive range of 
$L^q$-spaces, Besov spaces, Sobolev spaces and Schatten class spaces. Example of 
spaces without the UMD property include all nonreflexive Banach spaces, e.g.\ 
$L^1(0,1)$, $L^{\infty}(0,1)$ and $C([0,1])$. We refer the reader to 
\cite{Burk01,HNVW1,Rubio86,Pis16} for details.

\subsection{Martingales}
Let $(\Omega,\mathcal F, \mathbb P)$ be a probability space with a filtration 
$\mathbb F = (\mathcal F_t)_{t\geq 0}$ which satisfies the usual conditions (see \cite[Definition 1.2.25]{KS} and \cite{Prot}). 
Then $\mathbb F$ 
is right-continuous and the following proposition holds:
\begin{prop}\label{prop:cadlagvers}
 Let $X$ be a Banach space. Then any martingale $M:\mathbb R_+ \times \Omega \to 
X$ admits a {\it c\`adl\`ag} version, namely there exists a version of $M$ 
which is right-continuous and has left limits.
\end{prop}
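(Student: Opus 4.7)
The plan is to reduce to the classical scalar c\`adl\`ag regularization theorem via duality and then use Bochner $L^1$-continuity of conditional expectations to strengthen the resulting weak regularity into strong regularity.

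First, since each $M_t\in L^1(\O;X)$, Pettis' measurability theorem shows that $M_t$ is essentially valued in a separable subspace of $X$. Passing to the closed linear span of countably many $M_q$ with $q\in\Q\cap\R_+$, I may assume $X$ is separable, and I fix a countable norming family $(x_n^*)_{n\geq 1}$ in the unit ball of $X^*$.

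Second, I establish weak regularity along rationals. For each $n$, the real-valued martingale $t\mapsto\lb M_t,x_n^*\rb$ has a c\`adl\`ag version by Doob's classical theorem, so outside a null set $\O_n$ the map $q\mapsto\lb M_q(\om),x_n^*\rb$ has finite left and right limits along rationals at every $t\geq 0$. Since $\|M_t\|$ is a nonnegative submartingale, Doob's maximal inequality gives a null set $\O_0$ outside of which $q\mapsto\|M_q(\om)\|$ is locally bounded on $\Q\cap\R_+$. On $(\O_0\cup\bigcup_n\O_n)^c$ the countable norming family metrizes the weak topology on bounded subsets of the separable space $X$, so $q\mapsto M_q(\om)$ has weak left and right limits along rationals at every $t$. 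Define $\wt M_t(\om)$ as this weak right limit (and $\wt M_t:=0$ on the exceptional set); testing against each $x_n^*$ and invoking the scalar c\`adl\`ag property together with right-continuity $\F_{t+}=\F_t$ shows $\wt M_t=M_t$ a.s.\ for every fixed $t$.

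Third, I upgrade these weak limits to norm limits. Fix $t\geq 0$ and a rational $s>t$. The identity $M_q=\E(M_s\mid\F_q)$ for rationals $q\in(t,s)$ exhibits $(M_q)$ as a reverse Bochner martingale closed by $M_s\in L^1(\O;X)$, and the Bochner reverse martingale convergence theorem---valid unconditionally in any Banach space for $L^1$-closed reverse martingales, via simple-function approximation of the closure reducing to the scalar reverse convergence---yields $M_q\to\E(M_s\mid\F_{t+})=M_t$ a.s.\ in norm as $q\da t$ along rationals. A symmetric application of the forward Bochner L\'evy theorem to $M_q=\E(M_t\mid\F_q)$ for rationals $q\uparrow t$ gives strong left limits. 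Combining these strong limits with the weak limits from the previous step (which hold for every sequence of rationals and a.e.\ $\om$) identifies the weak limits as strong limits along the full rational family, so $\wt M$ is c\`adl\`ag and is a version of $M$.

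The main obstacle is precisely the weak-to-strong upgrade in the third step: scalar Doob regularization by itself produces only weak c\`adl\`ag paths, and in a general Banach space weak c\`adl\`ag does not imply strong c\`adl\`ag. What saves us is the $L^1$-closedness built into the martingale structure, which makes the unconditional Bochner reverse/forward martingale convergence theorems applicable with no geometric assumption on $X$ required beyond the separability secured in the first step.
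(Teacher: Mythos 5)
There is a genuine gap in your third step, and it sits exactly at the point you identify as the crux. The forward and reverse Bochner martingale convergence theorems you invoke are correct, but they are statements about a \emph{fixed} time $t$: for each $t$ they produce a null set $N_t$ outside of which $M_q\to M_t$ in norm as $q\downarrow t$ along rationals. A c\`adl\`ag version requires a \emph{single} null set outside of which the strong one-sided limits exist at \emph{every} $t\geq 0$ simultaneously, and the union $\bigcup_{t\geq 0}N_t$ over uncountably many $t$ is not controlled. Your second step does produce a single null set, but only for the scalar limits $\lim_{q\downarrow t}\langle M_q,x_n^*\rangle$; for a fixed $\omega$ outside that null set these limits define, at each $t$, a bounded linear functional on $\overline{\mathrm{span}}(x_n^*)=:Y$, i.e.\ an element of $Y^*$, which need not lie in $X\hookrightarrow Y^*$ (bounded sets of a separable nonreflexive $X$ are not relatively weakly compact, and on bounded sets the countable norming family metrizes $\sigma(X,\mathrm{span}(x_n^*))$, not the weak topology), and nothing forces $q\mapsto M_q(\omega)$ to be norm-Cauchy as $q\downarrow t$. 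Intersecting the two steps therefore only yields: for each fixed $t$, almost surely the weak limit is a strong limit and equals $M_t$ --- a statement about versions at each time, not about path regularity. The same objection applies to the left limits.

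The paper closes this gap by a different device that controls the whole path at once: reduce to $M_t=M_\infty$ for some $t$, approximate $M_t$ in $L^1(\Omega;X)$ by simple functions $\xi^n$, so that each $M^n_s=\mathbb{E}(\xi^n\,|\,\mathcal F_s)$ takes values in a finite-dimensional subspace and has a c\`adl\`ag version by the classical theory; Doob's maximal inequality applied to $M^n-M^k$ bounds $\sup_{s\leq t}\|M^n_s-M^k_s\|$ in probability by $\mathbb{E}\|\xi^n-\xi^k\|$, giving a subsequence converging a.s.\ \emph{uniformly} on $[0,t]$; and a uniform limit of c\`adl\`ag paths is c\`adl\`ag by completeness of $(D([0,t];X),\|\cdot\|_\infty)$. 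If you wish to keep your route, you would need a genuinely pathwise substitute for the upcrossing lemma in the vector-valued setting (an oscillation estimate valid simultaneously for all $t$); the maximal-inequality/uniform-approximation argument is precisely what supplies that uniformity.
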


Let $t>0$. For a Banach space $X$ we define the {\it Skorohod space} $D([0, t]; 
X)$ of all right-continuous functions $f:\mathbb R_+ \to X$ with left limits. 
The following lemma follows from \cite[Problem V.6.1]{Pollard} (see also 
\cite{Skor56}).

\begin{lemma}\label{lem:cadlagcomplete}
 Let $X$ be a Banach space, $t>0$. Then $(D([0, t]; X), \|\cdot\|_{\infty})$ is 
a Banach space.
\end{lemma}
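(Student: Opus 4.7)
The plan is to verify the three things needed: that $\|\cdot\|_\infty$ is finite on $D([0,t];X)$, that it is a genuine norm, and that the space is complete. The norm axioms are immediate once finiteness is established, so the two substantive steps are boundedness of an individual c\`adl\`ag function and completeness with respect to uniform convergence.

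For boundedness, I would first observe that any $f \in D([0,t];X)$ is locally bounded near every point: for $s \in [0,t)$, right-continuity supplies $\delta_s^+ > 0$ with $\|f(u) - f(s)\| < 1$ on $[s, s+\delta_s^+)$, and for $s \in (0,t]$, the existence of the left limit $f(s-)$ supplies $\delta_s^- > 0$ with $\|f(u) - f(s-)\| < 1$ on $(s-\delta_s^-, s)$. These neighbourhoods form an open cover of the compact interval $[0,t]$, so finitely many suffice and $\|f\|_\infty < \infty$.

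For completeness, let $(f_n) \subset D([0,t];X)$ be Cauchy in sup norm. Pointwise, $(f_n(s))$ is Cauchy in the Banach space $X$, so converges to some $f(s)$, and a standard $3\varepsilon$-argument promotes this to uniform convergence $f_n \to f$. It remains to show $f \in D([0,t];X)$. To check right-continuity at $s \in [0,t)$, fix $\varepsilon > 0$, choose $n$ with $\|f-f_n\|_\infty < \varepsilon/3$, then use right-continuity of $f_n$ at $s$ to find $\delta > 0$ with $\|f_n(u)-f_n(s)\| < \varepsilon/3$ for $u \in [s,s+\delta)$; the triangle inequality gives $\|f(u)-f(s)\| < \varepsilon$. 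To check existence of the left limit at $s \in (0,t]$, for $\varepsilon > 0$ pick $n$ with $\|f-f_n\|_\infty < \varepsilon/4$ and then $\delta > 0$ so that $\|f_n(u)-f_n(u')\| < \varepsilon/2$ whenever $u,u' \in (s-\delta,s)$ (which exists since $f_n(s-)$ exists). The triangle inequality yields $\|f(u)-f(u')\| < \varepsilon$ on the same interval, so the net $(f(u))_{u \uparrow s}$ is Cauchy and converges by completeness of $X$.

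I don't expect a genuine obstacle here: the argument is the usual transfer of one-sided regularity through uniform limits, and the only place where the hypothesis on $X$ enters is in extracting pointwise limits and in producing the left limit of $f$ from a Cauchy net. The mildly delicate point is the left-limit step, since one has to phrase the approximation in a way that does not presuppose the limit one is trying to construct, and the formulation via Cauchy nets $(f(u))_{u \uparrow s}$ handles this cleanly.
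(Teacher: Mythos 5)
Your proof is correct. The paper does not actually prove this lemma --- it simply cites \cite[Problem V.6.1]{Pollard} --- and your argument is the standard direct verification one would find there: local boundedness via a compactness argument, uniform convergence of a Cauchy sequence, and transfer of right-continuity and of left limits (the latter via the Cauchy-net formulation, which you handle correctly) to the uniform limit.
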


\begin{proof}[Proof of Proposition \ref{prop:cadlagvers}]
 One can find the proof in \cite[Proposition 2.2.2]{VerPhD}, but we will repeat 
it here for the convenience of the reader. Without loss of generality suppose 
that $M_{\infty}:= \lim_{t\to\infty} M_t$ exists a.s.\ and in $L^1(\Omega; X)$. Also we can assume that there 
exists $t>0$ such that $M_t = M_{\infty}$. Let $(\xi^n)_{n\geq 1}$ be a sequence 
of simple functions in $L^1(\Omega;X)$ such that $\xi^n \to M_{t}$ in 
$L^1(\Omega; X)$ as $n\to \infty$. For each $n\geq 1$ define a martingale 
$M^n:\mathbb R_+ \times \Omega \to X$ such that $M^n_s =\mathbb E(\xi^n|\mathcal 
F_s)$ for each $s\geq 0$. Fix $n\geq 1$. Since $\xi^n$ takes its values in a 
finite dimensional subspace of $X$, $M^n$ takes its values in the same finite 
dimensional subspace as well, and therefore by \cite{DM82} (or \cite[p.8]{Prot}) 
it has a c\`adl\`ag version. But $M^n_{t} = \xi^n \to M_{t}$ in $L^1(\Omega; X)$ 
as $n\to \infty$, so by the Doob maximal inequality \cite[Theorem 1.3.8(i)]{KS}, 
$M^n \to M$ in the ucp topology (the topology of the uniform convergence on 
compacts in probability). By taking an appropriate subsequence we can assume 
that $M^n \to M$ a.s.\ uniformly on $[0,t]$, and consequently, uniformly on 
$\mathbb R_+$. Therefore, by Lemma \ref{lem:cadlagcomplete} $M$ has a~c\`adl\`ag 
version. 
\end{proof}
Thanks to Proposition \ref{prop:cadlagvers} we can define $\Delta M_t$ and $M_{t-}$ for each 
$t\geq 0$,
\begin{align*}
 \Delta M_t &:= M_t - \lim_{\eps \to 0}M_{(t-\eps)\vee 0},\\
 M_{t-}&:= \lim_{\eps\to 0}M_{t-\eps},\;\;\;M_{0-}:= M_0.
\end{align*}
\subsection{Quadratic variation}
Let $(\Omega, \mathcal F, \mathbb P)$ be a probability space with a filtration 
$\mathbb F = (\mathcal F_t)_{t\geq 0}$ that
satisfies the usual conditions, $H$ be a Hilbert space. Let $M:\mathbb R_+ 
\times \Omega \to H$ be a local martingale. We define a {\it quadratic 
variation} of $M$ in the following way:
\begin{equation}\label{eq:defquadvar}
  [M]_t  := \mathbb P-\lim_{{\rm mesh}\to 0}\sum_{n=1}^N \|M(t_n)-M(t_{n-1})\|^2,
\end{equation}
where the limit in probability is taken over partitions $0= t_0 < \ldots < t_N = 
t$. The reader can find more about a quadratic variation in \cite{Kal,Prot,MP}.

\subsection{Stochastic integration}
Let $X$ be a Banach space, $H$ be a Hilbert space. For each $h\in H$, $x\in X$ 
we denote a linear operator $g\mapsto \langle g, h\rangle x$, $g\in H$, by 
$h\otimes x$. The process $\Phi: \mathbb R_+ \times \Omega \to \mathcal L(H,X)$ 
is called  \textit{elementary progressive}
with respect to the filtration $\mathbb F = (\mathcal F_t)_{t \geq 0}$ if it is 
of the form
\begin{equation}\label{eq:elprog}
 \Phi(t,\omega) = \sum_{k=1}^K\sum_{m=1}^M \mathbf 1_{(t_{k-1},t_k]\times 
B_{mk}}(t,\omega)
\sum_{n=1}^N h_n \otimes x_{kmn},\;\;\; t\geq 0, \omega \in \Omega,
\end{equation}
where $0 \leq t_0 < \ldots < t_K <\infty$, for each $k = 1,\ldots, K$ the sets
$B_{1k},\ldots,B_{Mk}$ are in $F_{t_{k-1}}$ and vectors $h_1,\ldots,h_N$ are 
orthogonal.

\smallskip

Let $M:\mathbb R_+ \times \Omega \to H$ be a martingale. Then we define a {\it 
stochastic integral} $\Phi \cdot M:\mathbb R_+ \times \Omega \to X$ of $\Phi$ 
with respect to $M$ in the following way:
\begin{equation}\label{eq:defofstochintwrtM}
 (\Phi \cdot M)_t = \sum_{k=1}^K\sum_{m=1}^M \mathbf 1_{B_{mk}}
\sum_{n=1}^N \langle(M(t_k\wedge t)- M(t_{k-1}\wedge t)), h_n\rangle 
x_{kmn},\;\; t\geq 0.
\end{equation}

The reader can find more on stochastic integration in a finite dimensional case in \cite{Kal}. The following lemma is a multidimensional version of \cite[Theorem 
26.6(v)]{Kal}.
\begin{lemma}\label{lemma:KunWat}
 Let $d$ be a natural number, $H$ be a $d$-dimensional Hilbert space, $M:\mathbb R_+ \times \Omega \to H$ be a 
martingale, $\Phi:\mathbb R_+ \times \Omega \to \mathcal L(H, \mathbb R)$ be 
elementary progressive. Then $[\Phi \cdot M] \lesssim_{d}\|\Phi\|^2 
\cdot [M]$ a.s.
\end{lemma}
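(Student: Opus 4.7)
The plan is to reduce the $d$-dimensional statement to the one-dimensional case (Kallenberg, Theorem 26.6(v)) by expanding $M$ and $\Phi$ in an orthonormal basis of $H$. Fix an orthonormal basis $(h_1,\ldots,h_d)$ of $H$ and set $M^i := \langle M, h_i\rangle$ and $\Phi^i(t,\omega) := \Phi(t,\omega)h_i$. Then each $M^i$ is a real-valued martingale, each $\Phi^i$ is a scalar-valued elementary progressive process (because $\Phi$ has the form \eqref{eq:elprog}), one has $\|\Phi\|^2 = \sum_{i=1}^d (\Phi^i)^2$, and from the definition \eqref{eq:defofstochintwrtM} the integral decomposes linearly as
\[
\Phi\cdot M = \sum_{i=1}^d \Phi^i\cdot M^i,
\]
where each term is a one-dimensional stochastic integral.

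The scalar result of Kallenberg then gives $[\Phi^i\cdot M^i] = (\Phi^i)^2\cdot [M^i]$ a.s.\ for every $i$. To combine these, I would use two elementary inequalities about quadratic variations. First, passing to the limit in \eqref{eq:defquadvar} and applying $(\sum_i a_i)^2 \leq d\sum_i a_i^2$ to the increments (or, equivalently, Kunita--Watanabe and Cauchy--Schwarz) yields, for any finite collection of real semimartingales $Y_1,\ldots,Y_d$,
\[
\Bigl[\sum_{i=1}^d Y_i\Bigr] \leq d\sum_{i=1}^d [Y_i].
\]
Second, since $|\langle x, h_i\rangle|^2 \leq \|x\|^2$ for every $x\in H$, applying the definition \eqref{eq:defquadvar} termwise shows $[M^i] \leq [M]$ as non-decreasing processes, so that $(\Phi^i)^2\cdot [M^i] \leq (\Phi^i)^2\cdot [M]$ in the sense of Stieltjes integrals. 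Chaining these two facts gives
\[
[\Phi\cdot M] \leq d\sum_{i=1}^d [\Phi^i\cdot M^i] = d\sum_{i=1}^d (\Phi^i)^2\cdot [M^i] \leq d\sum_{i=1}^d (\Phi^i)^2\cdot [M] = d\,\|\Phi\|^2\cdot [M],
\]
which is the claimed estimate with implicit constant $d$.

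There is no real obstacle beyond the bookkeeping: the elementary-progressive form of $\Phi$ in \eqref{eq:elprog} reduces every stochastic integral to a finite sum, so linearity of $\Phi\cdot M$ in the basis components, the applicability of the scalar Kallenberg theorem, and the well-posedness of the Stieltjes integrals are all immediate. The only delicate point to record is that the comparison $[M^i]\leq [M]$ (and hence its integrated version) is meaningful as an inequality between increasing processes, which follows directly from the defining limit \eqref{eq:defquadvar}.
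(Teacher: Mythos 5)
Your proposal is correct and follows essentially the same route as the paper: decompose $M$ and $\Phi$ in an (ortho)normal basis, write $\Phi\cdot M=\sum_i\Phi^i\cdot M^i$, apply the scalar identity $[\Phi^i\cdot M^i]=(\Phi^i)^2\cdot[M^i]$ from \cite[Theorem 26.6(v)]{Kal}, and combine via $[\sum_iY_i]\leq d\sum_i[Y_i]$ together with $[M^i]\leq[M]$. The only cosmetic difference is that the paper invokes the positive-definiteness of the quadratic covariation form for the subadditivity step, whereas you derive it directly from $(\sum_ia_i)^2\leq d\sum_ia_i^2$ in the defining partition sums; both are valid.
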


\begin{proof}
 Let $(h_n)_{n=1}^d$ be an orthogonal basis of $H$, $\Phi_1,\ldots,\Phi_d:\mathbb R_+ \times \Omega \to \mathbb R$ be such that $\Phi = \sum_{n=1}^d \Phi_n h_n$, and $M_1, \ldots,M_d:\mathbb R_+\times \Omega \to \mathbb R$ be martingales such that $M = \sum_{n=1}^d M_n h_n$. Notice that thanks to the definition of a quadratic variation \eqref{eq:defquadvar} one has that $[M] = [M_1]+\cdots +[M_d]$. Then since a quadratic variation is a positive-definite quadratic form (see \cite[Theorem 26.6]{Kal}), thanks to \cite[Theorem~26.6(v)]{Kal} one has for each $t\geq 0$ a.s.,
 \begin{align*}
  [\Phi \cdot M]_t =  [\Phi_1 \cdot M_1 + \cdots + \Phi_d\cdot M_d]_t &\lesssim_d [\Phi_1 \cdot M_1]_t + \cdots + [\Phi_d\cdot M_d]_t\\
  &=(\|\Phi_1\|^2 \cdot [M_1])_t + \cdots +(\|\Phi_d\|^2 \cdot [M_d])_t\\
  &\lesssim_d (\|\Phi\|^2 \cdot [M])_t.
 \end{align*}
\end{proof}

Using Lemma \ref{lemma:KunWat} one can extend stochastic integral to the case of general $\Phi$. In particular, the following lemma on stochastic integration can be shown.

\begin{lemma}\label{lemma:stochintmoment}
 Let $d$ be a natural number, $H$ be a $d$-dimensional Hilbert space, $p\in (1,\infty)$, $M, N:\mathbb R_+ \times \Omega 
\to H$ be $L^p$-martingales, $F:H \to H$ be a measurable function such that 
$\|F(h)\|\leq C \|h\|^{p-1}$ for each $h\in H$ and some $C>0$. Let $N_-:\mathbb 
R_+ \times \Omega \to H$ be such that $(N_-)_t = N_{t-}$ for each $t\geq 0$. 
Then $F(N_{-})\cdot M$ is a martingale and for each $t\geq 0$,
 \begin{equation}\label{eq:stochintmoment}
  \mathbb E |(F(N_-)\cdot M)_t|\lesssim_{p, d} C(\mathbb E \|N_t\|^p)^{\frac 
{p-1}p} (\mathbb E \|M_t\|^p)^{\frac {1}p}.
 \end{equation}
\end{lemma}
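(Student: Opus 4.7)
My plan is to combine Lemma \ref{lemma:KunWat} with the Burkholder--Davis--Gundy (BDG) inequality, then split by H\"older's inequality.

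First I would verify that the integrand $F(N_-)$ is well-defined as the integrand of a stochastic integral. Since $N$ is c\`adl\`ag and adapted (Proposition \ref{prop:cadlagvers}), the process $N_-$ is left-continuous and adapted, hence predictable. Borel measurability of $F$ then gives that $F(N_-)$ is predictable. The growth bound $\|F(h)\|\leq C\|h\|^{p-1}$ together with $M,N \in L^p$ yields, via Doob's maximal inequality and a localization argument, that $F(N_-)$ lies in the domain of the $H$-valued stochastic integral (extended from the elementary case by standard approximation, cf.\ the discussion following Lemma \ref{lemma:KunWat}), so that $F(N_-)\cdot M$ is at least a local martingale.

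Next I would use Lemma \ref{lemma:KunWat} (applied coordinatewise in an orthogonal basis of $H$, which costs only a $d$-dependent constant) to obtain
\[
 [F(N_-)\cdot M]_t \lesssim_d (\|F(N_-)\|^2 \cdot [M])_t \leq C^2 \sup_{0\leq s\leq t}\|N_{s-}\|^{2(p-1)} \,[M]_t.
\]
Applying the BDG inequality to the real-valued local martingale $F(N_-)\cdot M$ then gives
\[
 \mathbb E \sup_{0\leq s\leq t}|(F(N_-)\cdot M)_s| \lesssim \mathbb E [F(N_-)\cdot M]_t^{1/2} \lesssim_d C\, \mathbb E \Bigl(\sup_{0\leq s\leq t}\|N_{s}\|^{p-1}\,[M]_t^{1/2}\Bigr).
\]
By H\"older's inequality with exponents $\frac{p}{p-1}$ and $p$,
\[
 \mathbb E \Bigl(\sup_{0\leq s\leq t}\|N_{s}\|^{p-1}\,[M]_t^{1/2}\Bigr) \leq \bigl(\mathbb E \sup_{0\leq s\leq t}\|N_s\|^p\bigr)^{(p-1)/p}\bigl(\mathbb E [M]_t^{p/2}\bigr)^{1/p}.
\]
Doob's $L^p$-maximal inequality applied to $\|N\|$ bounds the first factor by a $p$-dependent multiple of $(\mathbb E\|N_t\|^p)^{(p-1)/p}$, and the $H$-valued BDG inequality in dimension $d$ (again reducing to the scalar case coordinatewise) controls $\mathbb E [M]_t^{p/2}$ by a constant depending on $p$ and $d$ times $\mathbb E\|M_t\|^p$. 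Combining these yields \eqref{eq:stochintmoment}, and the finiteness of the right-hand side together with the local martingale property upgrades $F(N_-)\cdot M$ to a genuine martingale (e.g.\ by dominated convergence along a localizing sequence).

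The main obstacle I anticipate is the rigorous extension of the stochastic integral from elementary integrands to the general predictable process $F(N_-)$ in the $H$-valued (rather than scalar) framework, and the associated verification that the estimate of Lemma \ref{lemma:KunWat} carries over to such integrands. Once this approximation is in place, the chain of BDG + Doob + H\"older is routine.
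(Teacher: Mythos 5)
Your proposal is correct and follows essentially the same route as the paper: predictability of $F(N_-)$, Lemma \ref{lemma:KunWat} to dominate $[F(N_-)\cdot M]$ by $\|F(N_-)\|^2\cdot[M]$, then the chain BDG + H\"older + Doob + BDG to reach \eqref{eq:stochintmoment}. The only cosmetic difference is the order of operations: the paper first establishes $\mathbb E(\|F(N_-)\|^2\cdot[M])_t^{1/2}<\infty$ and invokes \cite[Theorem 26.12]{Kal} to get integrability and the martingale property in one stroke, whereas you first produce a local martingale and upgrade it afterwards — both are valid.
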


\begin{proof}
 First notice that $F(N_-)$ is predictable. Therefore, thanks to Lemma 
\ref{lemma:KunWat} and \cite[Theorem 26.12]{Kal},  in order to 
prove that $F(N_{-})$ is stochastically integrable with respect to $M$ and that $F(N_{-})\cdot M$ 
is a martingale it is sufficient to show 
that $\mathbb E (\|F(N_-)\|^2\cdot [M])_t^{\frac 12}<\infty$. Without loss of 
generality suppose that $M_0 = N_0=0$ a.s.\ and $C=1$. Then
 \begin{align}\label{eq:stochintmomentproof}
 \mathbb E (\|F(N_-)\|^2\cdot [M])_t^{\frac 12} &\leq \mathbb E 
(\|N_{t-}\|^{2(p-1)}\cdot [M]_t)^{\frac 12}
  \leq\mathbb E\Bigl( \sup_{0\leq s\leq t}\|N_s\|^{p-1} [M]_t^{\frac 
12}\Bigr)\nonumber\\
  &\stackrel{(i)}\leq (\mathbb E \sup_{0\leq s\leq t}\|N_s\|^{p})^{\frac {p-1}p} 
(\mathbb E [M]_t^{\frac p2})^{\frac {1}p} \\
  &\stackrel{(ii)}\lesssim_p(\mathbb E \|N_t\|^p)^{\frac {p-1}p} (\mathbb E 
\|M_t\|^p)^{\frac {1}p}<\infty,\nonumber
 \end{align}
where $(i)$~follows from the H\"older inequality, and $(ii)$ holds thanks to 
\cite[Theorem 26.12]{Kal} and \cite[Theorem 1.3.8(iv)]{KS}.

Now let us show \eqref{eq:stochintmoment}:
\begin{align*}
  \mathbb E |(F(N_-)\cdot M)_t|&\stackrel{(i)}\lesssim_p \mathbb E 
[F(N_-)\cdot M]^{\frac 12}_t \stackrel{(ii)}\lesssim_d \mathbb E 
(\|F(N_-)\|^2\cdot [M])_t^{\frac 12}\\
  &\stackrel{(iii)}\lesssim_p(\mathbb E \|N_t\|^p)^{\frac {p-1}p} 
(\mathbb E \|M_t\|^p)^{\frac {1}p}.
\end{align*}
Here $(i)$ follows from \cite[Theorem 26.12]{Kal}, $(ii)$ holds 
thanks to Lemma \ref{lemma:KunWat}, and $(iii)$ follows 
from \eqref{eq:stochintmomentproof}.
\end{proof}

\section{UMD Banach spaces and weak differential subordination}

 From now on the
scalar field $\mathbb K$ can be either $\mathbb R$ or 
$\mathbb C$.

\subsection{Discrete case}
In this section we assume that $X$ is a Banach space over the scalar field 
$\mathbb K$ and with a separable dual $X^*$.
 Let $(\Omega, \mathcal F, \mathbb P)$ be a complete probability space with 
filtration
$\mathbb F := (\mathcal F_n)_{n \geq 0}$, $\mathcal F_{0} = \{\varnothing, 
\Omega\}$.

\begin{defi}
 Let $(f_n)_{n\geq 0}$, $(g_n)_{n\geq 0}$ be $X$-valued local martingales. For 
each $n\geq 1$ we define $df_n := f_n - f_{n-1}$, $dg_n := g_n - g_{n-1}$.
 \begin{itemize}
  \item [(i)]$g$ is {\em differentially subordinated} to $f$ if one has that 
$\|dg_n\|\leq \|df_n\|$ a.s.\ for all $n\geq 1$ and $\|g_0\|\leq \|f_0\|$ a.s.
  \item[(ii)]$g$ is {\em weakly differentially subordinated} to $f$ if for each 
$x^* \in X^*$ one has that $|\langle dg_n,x^*\rangle|\leq|\langle 
df_n,x^*\rangle|$ a.s.\ for all $n\geq 1$ and $|\langle g_0,x^*\rangle|\leq 
|\langle f_0,x^*\rangle|$ a.s.
 \end{itemize}
\end{defi}

 The following characterization of Hilbert spaces can be found in 
\cite[Theorem~3.24(i)]{Os12}:

\begin{theorem}
 A Banach space $X$ is isomorphic to a Hilbert space if and 
only if for some (equivalently, for all) $1<p<\infty$ there exists a constant 
$\alpha_{p,X}>0$ such that for any pair of $X$-valued local martingales 
$(f_n)_{n\geq 0}$, $(g_n)_{n\geq 0}$ such that $g$ is differentially 
subordinated to $f$ one has that \begin{equation}\label{eq:forpnormsubprop}
        \mathbb E \|g_n\|^p \leq \alpha_{p,X}^p \mathbb E \|f_n\|^p     
\end{equation}
 for each $n\geq 1$.
\end{theorem}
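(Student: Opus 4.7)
\emph{Forward direction} (the hard-to-get implication on the inequality side is actually the easy one here). Let $T\colon X\to H$ be an isomorphism onto a Hilbert space $H$ and set $c:=\|T\|\|T^{-1}\|$. For $X$-valued martingales $f,g$ with $g$ differentially subordinated to $f$, the $H$-valued transports $Tf,Tg$ satisfy
\[
\|d(Tg)_n\|\le\|T\|\|dg_n\|\le\|T\|\|df_n\|\le c\,\|d(Tf)_n\|\text{ a.s.,}
\]
and similarly $\|(Tg)_0\|\le c\,\|(Tf)_0\|$. Applying Theorem~\ref{thm:introBurWan} to the $H$-valued differences $e_n:=d(Tg)_n$ and $d_n:=c\,d(Tf)_n$ yields $\mathbb E\|Tg_n\|^p\le(p^*-1)^p c^p\mathbb E\|Tf_n\|^p$, which pulls back through $T^{-1}$ to $\mathbb E\|g_n\|^p\le c^{2p}(p^*-1)^p\mathbb E\|f_n\|^p$. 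One may therefore take $\alpha_{p,X}=(p^*-1)\,d_{\mathrm{BM}}(X,H)^2$.

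\emph{Reverse direction} (inequality $\Rightarrow X$ is isomorphic to a Hilbert space). The plan is to invoke Kwapie\'n's theorem and show that $X$ has both type 2 and cotype 2. I would test the assumed inequality against Paley--Walsh martingales: given $x_1,\dots,x_n\in X$ and independent Rademachers $\xi_k$ generating the filtration, set $f_k:=\sum_{j=1}^k\xi_j x_j$. A short Walsh-expansion and mean-zero argument shows that any $g$ with $g_0=0$ differentially subordinated to $f$ on this filtration must have the form $dg_k=\xi_k W_k$ for some $\mathcal F_{k-1}$-measurable $X$-valued $W_k$ satisfying $\|W_k(\omega)\|\le\|x_k\|$ a.s.\ By specializing $W_k$ --- using sign-permutations adapted to the past, or truncations that switch off some coordinates of $f$ --- one extracts from the inequality both the upper Rademacher bound $\sum\|x_k\|^2\lesssim (\mathbb E\|\sum_k\xi_k x_k\|^p)^{2/p}$ (cotype~2) and, by exchanging the roles of $f$ and $g$, type~2.

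\emph{Main obstacle.} The pointwise constraint $\|dg_n\|\le\|df_n\|$ is very rigid in a general Banach space: in the absence of norm-preserving rotations, the only freedom in producing a differentially subordinate $g$ from a Paley--Walsh $f$ is sign modulation adapted to the past, which is a purely combinatorial device. Turning this into genuine Hilbertian geometry is the delicate part. I expect the crux to be a two-vector reduction: given $x,y\in X$ with $\|x\|=\|y\|$, construct an explicit finite pair $(f,g)$ such that $\mathbb E\|g_n\|^p/\mathbb E\|f_n\|^p$ is bounded below by a positive multiple of the parallelogram defect of $x,y$ measured in $X$; the assumed bound then forces the defect to be controlled, and a Jordan--von Neumann type averaging produces an equivalent Hilbertian norm on $X$. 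The detailed construction is carried out in \cite[Theorem~3.24(i)]{Os12}.
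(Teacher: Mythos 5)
The paper does not prove this theorem at all: it is quoted verbatim from \cite[Theorem~3.24(i)]{Os12} as a known characterization, so there is no in-paper argument to compare yours against. Judged on its own merits, your forward direction is complete and correct: transporting $f,g$ through an isomorphism $T$ onto a Hilbert space preserves differential subordination up to the factor $c=\|T\|\|T^{-1}\|$, and Theorem~\ref{thm:introBurWan} then gives $\alpha_{p,X}=(p^*-1)\,d_{\mathrm{BM}}(X,H)^2$ (modulo the routine localization needed to pass from $L^p$-martingales to local martingales). Your reverse direction follows the standard and correct plan (Kwapie\'n via type~$2$ plus cotype~$2$), and your structural observation that on the Paley--Walsh filtration every subordinate $g$ has $dg_k=\xi_kW_k$ with $W_k$ predictable and $\|W_k\|\le\|x_k\|$ is exactly right. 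Where the sketch is weaker than it needs to be is in the choice of test martingales: ``sign-permutations adapted to the past'' only reproduce the UMD inequality, and the closing paragraph about parallelogram defects and a Jordan--von Neumann averaging is unnecessary machinery. The decisive (and simple) device is that differential subordination constrains only the \emph{norms} of the differences, not their directions: taking $W_k=\|x_k\|u$ for a fixed unit vector $u$ collapses $g$ onto a one-dimensional martingale, and Khintchine--Kahane turns \eqref{eq:forpnormsubprop} into $\bigl(\sum_k\|x_k\|^2\bigr)^{1/2}\lesssim\bigl(\mathbb E\|\sum_k\xi_kx_k\|^p\bigr)^{1/p}$, i.e.\ cotype~$2$; running the same construction with the roles of $f$ and $g$ exchanged (now $\|df_k\|=\|x_k\|=\|dg_k\|$, so subordination holds) gives type~$2$. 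With that one substitution your outline closes into a complete proof without appealing to the reference for the reverse implication.
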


By the Pettis measurability theorem 
\cite[Theorem 1.1.20]{HNVW1}, we may assume that $X$ is separable. Then weak 
differential subordination implies differential subordination. 
Indeed, let $(x_k)_{k\geq 1}$ be a dense subset of $X$, $(x_k^*)_{k\geq 1}$ be 
 a sequence of linear functionals on $X$ such that $\langle x_k, x_k^*\rangle = 
\|x_k\|$ and $\|x_k^*\|=1$ for each $k\geq 1$ (such a sequence exists by the 
Hahn-Banach theorem). Let $(g_n)_{n\geq 0}$ be weakly differentially 
subordinated to $(f_n)_{n\geq 0}$. Then for each $n\geq 1$~a.s.
\[
 \|dg_n\| = \sup_{k\geq 1}|\langle dg_n, x^*_k\rangle| \leq  \sup_{k\geq 
1}|\langle df_n, x^*_k\rangle|= \|df_n\|.
\]
By the same reasoning $\|g_0\|\leq \|f_0\|$ a.s.
This means that the weak differential subordination property is more restrictive 
than the differential subordination property. Therefore, under the weak differential subordination, one could expect that 
the assertions of the type \eqref{eq:forpnormsubprop} characterize a broader class of 
Banach spaces $X$.
Actually we will prove the following theorem, which extends 
\cite[Theorem~2]{Burk85a} to the UMD case.

\begin{theorem}\label{thm:forseq}
A Banach space $X$ is a UMD space if and only if for some (equi\-va\-lent\-ly, for all) $1<p<\infty$ there exists a constant 
$\beta>0$ such that for all \black $X$-valued local martingales $(f_n)_{n\geq 0}$ and 
$(g_n)_{n\geq 0}$ such that $g$ is weakly differentially subordinated to $f$ one 
has
 \begin{equation}\label{eq:forpnorm}
  \mathbb E \|g_n\|^p\leq \beta^p \mathbb E\|f_n\|^p,\;\;\; n\geq 1.
 \end{equation}
If this is the case then the smallest admissible $\beta$ is the UMD constant 
$\beta_{p,X}$.
\end{theorem}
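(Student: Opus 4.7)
I would prove the two implications separately, reducing the harder direction to Theorem \ref{thm:introweakdiffsub}.

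For the necessity, assume \eqref{eq:forpnorm} holds with some $\beta>0$. Given a martingale difference sequence $(d_j)_{j=1}^n$ and signs $(\eps_j)_{j=1}^n$ with $|\eps_j|=1$, set $f_k=\sum_{j=1}^k d_j$ and $g_k=\sum_{j=1}^k \eps_j d_j$. For every $x^*\in X^*$ and every $j$, $|\langle dg_j,x^*\rangle| = |\eps_j|\,|\langle df_j,x^*\rangle| = |\langle df_j,x^*\rangle|$ almost surely, and $f_0=g_0=0$, so $g$ is weakly differentially subordinated to $f$. Applying \eqref{eq:forpnorm} at index $n$ then yields the UMD$_p$ inequality with constant $\beta$, so $X$ is UMD and $\beta_{p,X}\le\beta$.

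For the sufficiency, suppose $X$ is UMD with constant $\beta_{p,X}$ and let $(f_n)$, $(g_n)$ be $X$-valued local martingales with $g$ weakly differentially subordinated to $f$. The key step is to reformulate weak differential subordination pointwise. Since $|\langle dg_n,x^*\rangle|\le|\langle df_n,x^*\rangle|$ for every $x^*\in X^*$, any $x^*$ that annihilates $df_n(\om)$ also annihilates $dg_n(\om)$; hence, by Hahn--Banach, $dg_n(\om)$ lies on the line through $df_n(\om)$. Using the separability of $X^*$ one can select, in a measurable and adapted way, scalars $a_n$ with $|a_n|\le 1$ such that $dg_n = a_n\,df_n$ for every $n\ge 1$ (and analogously $g_0 = a_0 f_0$). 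Theorem \ref{thm:introweakdiffsub} applied to $(d_n,e_n)=(df_n,dg_n)$ then gives $\E\|g_n\|^p\le\beta_{p,X}^p\E\|f_n\|^p$ whenever both sides are finite, and a standard localization removes any remaining integrability assumption. Together with the first part, this identifies $\beta_{p,X}$ as the smallest admissible constant.

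The main obstacle is therefore concentrated entirely in Theorem \ref{thm:introweakdiffsub}: one needs a Burkholder function $U:X\times X\to\R$ satisfying $U(x,y)\ge\|y\|^p-\beta_{p,X}^p\|x\|^p$ together with the enhanced zigzag-concavity that $z\mapsto U(x+z,y+\eps z)$ is concave for \emph{every} $\eps$ with $|\eps|\le 1$, not only for $|\eps|=1$. Granting such $U$, the standard argument proceeds by noting that, since $dg_n=a_n df_n$ with $|a_n|\le 1$, the enhanced concavity forces $U(f_n,g_n)$ to be a supermartingale, and telescoping yields $\E\|g_n\|^p-\beta_{p,X}^p\E\|f_n\|^p \le \E U(f_n,g_n) \le \E U(f_0,g_0)$, which is nonpositive because $\|g_0\|\le\|f_0\|$. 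Constructing such a $U$ in an infinite-dimensional UMD space, where no explicit formula is available, is the delicate technical step; in finite dimensions it is automatic from Wang's formula, as noted in the introduction.
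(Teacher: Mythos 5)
Your overall strategy coincides with the paper's: the ``if'' direction is the routine specialization to $dg_n=\eps_n df_n$ with $|\eps_n|=1$, the pointwise representation $dg_n=a_n df_n$ with $|a_n|\le 1$ adapted is exactly Proposition \ref{thm:discweakdiffsub} (proved via Lemma \ref{lemma:linfuncwiththesameker} and Lemma \ref{lemma:ajustformeasfunc}), and the heart of the matter is a Burkholder function $U$ with the strengthened zigzag-concavity for all $|\eps|\le 1$. However, there is a genuine gap at the decisive step: the assertion that this concavity ``forces $U(f_n,g_n)$ to be a supermartingale.'' If $a_n$ were \emph{predictable}, this would follow from Jensen's inequality applied, conditionally on $\mathcal F_{n-1}$, to the concave map $z\mapsto U(f_{n-1}+z,\,g_{n-1}+a_n z)$. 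But $a_n$ is only $\mathcal F_n$-measurable (it is built from $df_n$ and $dg_n$ themselves), so conditionally on $\mathcal F_{n-1}$ you are not evaluating a single concave function at a mean-zero increment, and Jensen does not apply. The paper's proof \eqref{eq:verybigthing} circumvents this by a first-order argument: it passes to the biconcave companion $V$, uses the one-sided Taylor inequality \eqref{eq:dirderivativeV}, and exploits the fact that \emph{both} $(a_n+1)df_n$ and $(a_n-1)df_n$ are martingale differences (because $df_n$ and $dg_n=a_n df_n$ each are), so that the linear terms, whose gradients are $\mathcal F_{n-1}$-measurable, have vanishing conditional expectation. Your sketch contains none of this, and it is precisely the point where adaptedness (rather than predictability) of $a_n$ must be confronted.

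Making that first-order argument rigorous forces further machinery that your plan omits and that cannot be waved away: $U$ (given by the abstract formula \eqref{eq:formulaofU}, which by the way exists in any UMD space, finite-dimensional or not --- the difficulty is not its existence but its lack of smoothness) is merely Lipschitz-type continuous, so one must reduce to finite dimensions via the projections $P_m$ onto $Y_m^*$ with $\beta_{p,Y_m^*}\le\beta_{p,X}$, invoke Alexandrov's theorem to get a.e.\ Fr\'echet differentiability of $V$, perturb the martingales as in Lemma \ref{lemma:abscontmart} so that $(f_n,g_n)$ has an absolutely continuous law and almost surely avoids the null set where $V$ fails to be differentiable, and use the gradient bound of Lemma \ref{lem:V_xV_yarenice} to justify integrability of the linear terms before taking conditional expectations. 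These are not routine ``standard localization'' matters but the substantive technical content of the sufficiency direction; as written, your proposal records the correct skeleton but leaves its load-bearing step unproved.
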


Theorem \ref{thm:introweakdiffsub} is contained in this result as a special 
case. 

The proof of Theorem \ref{thm:forseq} consists of several steps.

\begin{prop}\label{thm:discweakdiffsub}
 Let $X$ be a Banach space. Let $(f_n)_{n\geq 0}$, $(g_n)_{n\geq 0}$ be two 
$X$-valued local martingales. Then $g$ is weakly differentially subordinated to 
$f$ if and only if there exists an adapted scalar-valued process $(a_n)_{n\geq 
0}$ such that $|a_n|\leq 1$ a.s.\ for all $n\geq 1$, $dg_n = a_ndf_n$ a.s.\ and 
$g_0 = a_0f_0$ a.s.
\end{prop}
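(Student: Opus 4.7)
The \emph{if} direction is immediate: if $dg_n = a_n df_n$ with $|a_n|\le 1$ a.s., then for every $x^*\in X^*$, $|\langle dg_n, x^*\rangle| = |a_n|\,|\langle df_n, x^*\rangle|\le |\langle df_n, x^*\rangle|$ a.s., and similarly at $n=0$, which is exactly weak differential subordination.

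For the \emph{only if} direction the plan is to construct $a_n(\omega)$ pointwise on $\Omega$, check $|a_n|\le 1$, and then verify $\mathcal F_n$-measurability. First I would exploit the standing assumption that $X^*$ is separable: fix a countable dense sequence $(x_k^*)_{k\geq 1}$ in $X^*$ and intersect the countably many null sets given by the hypothesis to obtain a single null set $N$ outside of which $|\langle dg_n, x_k^*\rangle|\le |\langle df_n, x_k^*\rangle|$ holds \emph{simultaneously} for every $k$. Continuity of $x^*\mapsto \langle v, x^*\rangle$ upgrades this, for each $\omega\notin N$, to the same inequality for every $x^*\in X^*$.

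Next, for $\omega\notin N$ I would show that $dg_n(\omega)$ is a scalar multiple of $df_n(\omega)$ by a Hahn--Banach argument. If $df_n(\omega)=0$, the inequality forces $\langle dg_n(\omega),x^*\rangle=0$ for every $x^*$, hence $dg_n(\omega)=0$. If $df_n(\omega)\neq 0$ and $dg_n(\omega)$ were not in the span of $df_n(\omega)$, Hahn--Banach would produce an $x^*$ annihilating $df_n(\omega)$ but not $dg_n(\omega)$, contradicting the inequality. So $dg_n(\omega)=c(\omega)\,df_n(\omega)$ for a unique scalar $c(\omega)$, and testing against any $x^*$ with $\langle df_n(\omega),x^*\rangle\neq 0$ yields $|c(\omega)|\le 1$.

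It remains to realise $c$ as an $\mathcal F_n$-measurable function. I would set $k(\omega):=\min\{k\geq 1:\langle df_n(\omega), x_k^*\rangle\neq 0\}$, with $k(\omega)=\infty$ when no such $k$ exists; this is $\mathcal F_n$-measurable because each $\langle df_n,x_k^*\rangle$ is. Then define $a_n(\omega)=0$ if $k(\omega)=\infty$, and otherwise $a_n(\omega)=\langle dg_n(\omega),x_{k(\omega)}^*\rangle/\langle df_n(\omega),x_{k(\omega)}^*\rangle$. By the previous paragraph this ratio is independent of the chosen index (it always equals $c(\omega)$), so $a_n$ is a well-defined $\mathcal F_n$-measurable scalar with $|a_n|\le 1$ and $dg_n=a_n\,df_n$ a.s. The initial case is handled analogously with $g_0, f_0$ replacing $dg_n, df_n$ and uses $\mathcal F_0=\{\varnothing,\Omega\}$. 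The main obstacle is the \emph{measurable} selection of the scalar: Hahn--Banach produces it pointwise, and the minimum-index trick is what makes the selection adapted; without the separability of $X^*$ (which provides the countable norming family) neither the upgrade from countably many $x_k^*$ to all $x^*$ nor the construction of $a_n$ would be available.
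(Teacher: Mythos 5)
Your proof is correct and follows essentially the same route as the paper's: a countable norming family plus a continuity upgrade to all of $X^*$, a pointwise Hahn--Banach (kernel-inclusion) argument giving $dg_n=c\,df_n$ with $|c|\le 1$, and a first-nonvanishing-index trick to make the scalar measurable and adapted. The only cosmetic differences are that the paper packages the pointwise step and the measurable selection into two auxiliary lemmas which avoid assuming $X^*$ separable (using the Pettis measurability theorem and a separable norming subspace of $X^*$ instead), and that its selection is indexed by the first functional not annihilating $dg_n$ rather than $df_n$.
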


For the proof we will need two lemmas.

\begin{lemma}\label{lemma:linfuncwiththesameker}
 Let $X$ be a Banach space, $\ell_1,\ell_2 \in X^*$ be such that 
$\text{ker}(\ell_1) \subset \text{ker}(\ell_2)$. Then there exists $a\in \mathbb 
K$ such that $\ell_2 = a\ell_1$.
\end{lemma}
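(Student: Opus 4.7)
The plan is to handle this as a standard fact from linear algebra: a linear functional whose kernel contains the kernel of another is a scalar multiple of that functional. Continuity of $\ell_1, \ell_2$ plays no role here; only the linearity matters.

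First I would dispose of the trivial case $\ell_1 = 0$. Then $\ker(\ell_1) = X$, so $\ker(\ell_2) = X$ as well, which forces $\ell_2 = 0$, and any scalar $a$ (say $a = 0$) does the job.

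For the main case, assume $\ell_1 \neq 0$, and pick $x_0 \in X$ with $\ell_1(x_0) \neq 0$. Set $a := \ell_2(x_0)/\ell_1(x_0) \in \mathbb{K}$. For an arbitrary $x \in X$, form
\[
y := x - \frac{\ell_1(x)}{\ell_1(x_0)}\,x_0.
\]
By construction $\ell_1(y) = 0$, so by hypothesis $\ell_2(y) = 0$ as well. Expanding the latter gives $\ell_2(x) = \frac{\ell_1(x)}{\ell_1(x_0)}\,\ell_2(x_0) = a\,\ell_1(x)$. Since $x$ was arbitrary, $\ell_2 = a\ell_1$, as required.

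There is no real obstacle; the argument is a one-dimensional quotient computation. The only thing worth flagging is that the scalar field $\mathbb{K}$ may be $\mathbb{C}$, but the same proof works verbatim since the identity $\ell_2(y) = 0$ uses only $\mathbb{K}$-linearity of $\ell_2$.
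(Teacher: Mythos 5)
Your proof is correct and follows essentially the same route as the paper: choose $x_0$ outside the kernel, set $a=\ell_2(x_0)/\ell_1(x_0)$, and reduce a general vector modulo $\ker(\ell_1)\subset\ker(\ell_2)$. The only (harmless) difference is that you case-split on $\ell_1=0$ rather than $\ell_2=0$, which lets you pick $x_0$ with $\ell_1(x_0)\neq 0$ directly and skip the paper's codimension argument showing $\ker(\ell_1)=\ker(\ell_2)$.
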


\begin{proof}
 If $\ell_2=0$, then the assertion is obvious and one can take $a=0$. Suppose 
that $\ell_2 \neq0$. Then $\text{codim}(\text{ker}(\ell_2)) = 1$ (see 
\cite[p.80]{KF1}), and there exists $x_0 \in X\setminus \text{ker}(\ell_2)$ such 
that $x_0 \oplus \text{ker}(\ell_2) = X$. Notice that since 
$\text{codim}(\text{ker}(\ell_1)) \leq 1$ and $\text{ker}(\ell_1) \subset 
\text{ker}(\ell_2)$, one can easily conclude that $\text{ker}(\ell_1) = 
\text{ker}(\ell_2)$. Let $a = \ell_2(x_0)/\ell_1(x_0)$. Fix $y \in X$. Then 
there exists $\lambda \in \mathbb K$ such that $y-\lambda x_0 \in 
\text{ker}(\ell_1) =\text{ker}(\ell_2)$. Therefore
 \[
  \ell_2(y) = \ell_2(\lambda x_0) + \ell_2(y-\lambda x_0) = a\ell_1(\lambda 
x_0)+ a\ell_1(y-\lambda x_0) = a\ell_1(y),
 \]
 hence $\ell_2 = a\ell_1$.
\end{proof}

\begin{lemma}\label{lemma:ajustformeasfunc}
 Let $X$ be a Banach space, $(S,\Sigma, \mu)$ be a measure space. Let $f,g:S \to 
X$ be strongly measurable such that $|\langle g, x^*\rangle|\leq |\langle f, 
x^*\rangle|$ $\mu$-a.s.\ for each $x^* \in X^*$. Then there exists a measurable 
function $a:S\to\mathbb K$ such that $\|a\|_{\infty}\leq 1$ and $g=af$.
\end{lemma}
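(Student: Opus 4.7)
The plan is to prove the claim pointwise (for almost every $s$) using Lemma \ref{lemma:linfuncwiththesameker}, and then to assemble a measurable selector $a$ using a countable dense set of functionals.

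First I would reduce to the separable setting. Since $f$ and $g$ are strongly measurable, they take values (almost surely) in a separable closed subspace $X_0$ of $X$, whose dual $X_0^*$ is norm-separating and need not itself be separable, but contains a countable weak$^*$-dense subset $(x_k^*)_{k\geq 1}$ of its unit ball (each $x_k^*$ may be taken as the restriction of an element of $X^*$ by Hahn--Banach). Applying the hypothesis to each $x_k^*$ and discarding a countable union of null sets, there is a null set $N\in\Sigma$ such that for every $s\notin N$ and every $k\geq 1$,
\[
|\langle g(s), x_k^*\rangle|\leq |\langle f(s), x_k^*\rangle|.
\]
By density, the same inequality holds for every $x^*\in X^*$ (at this particular $s$).

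Next I would fix $s\notin N$ and argue pointwise. If $f(s)=0$ the displayed inequality forces $\langle g(s),x^*\rangle=0$ for all $x^*$, so $g(s)=0$ by Hahn--Banach and we may set $a(s)=0$. If $f(s)\neq 0$, consider the canonical functionals $\ell_1,\ell_2\in X^{**}$ defined by $\ell_i(x^*)=\langle \cdot\,, x^*\rangle$ evaluated at $f(s)$ and $g(s)$ respectively. The inequality gives $\ker(\ell_1)\subset\ker(\ell_2)$, so Lemma \ref{lemma:linfuncwiththesameker} yields a scalar $a\in\mathbb K$ with $\ell_2=a\ell_1$, i.e.\ $\langle g(s),x^*\rangle=a\langle f(s),x^*\rangle$ for every $x^*$; another application of Hahn--Banach gives $g(s)=af(s)$, and the pointwise bound $|\ell_2|\leq|\ell_1|$ forces $|a|\leq 1$.

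The main obstacle is producing a \emph{measurable} choice of such a scalar. I would proceed as follows: for each $s\notin N$ with $f(s)\neq 0$ there must exist some $k$ with $\langle f(s),x_k^*\rangle\neq 0$ (otherwise $f(s)$ would be annihilated by a separating family and hence would vanish). Set
\[
k(s):=\min\bigl\{k\geq 1 : \langle f(s), x_k^*\rangle\neq 0\bigr\},
\]
which is measurable since $\{k(s)=k\}=\bigcap_{j<k}\{\langle f,x_j^*\rangle=0\}\cap\{\langle f,x_k^*\rangle\neq 0\}$ and each $s\mapsto\langle f(s),x_j^*\rangle$ is measurable. Define
\[
a(s):=\sum_{k\geq 1}\mathbf 1_{\{k(s)=k\}}\,\frac{\langle g(s),x_k^*\rangle}{\langle f(s),x_k^*\rangle},
\]
with $a(s):=0$ on $\{f=0\}\cup N$. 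Then $a$ is measurable, and by the pointwise analysis above the value of the quotient coincides with the unique scalar for which $g(s)=a(s)f(s)$, so both required conclusions $g=af$ $\mu$-a.e.\ and $\|a\|_\infty\leq 1$ follow.
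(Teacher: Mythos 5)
Your proof is correct and follows essentially the same route as the paper: reduce to a countable family of functionals to control the null sets, apply Lemma \ref{lemma:linfuncwiththesameker} pointwise to $f(s),g(s)$ viewed as functionals on a separable piece of the dual, and build a measurable $a$ by selecting the first functional in the countable family that does not annihilate the relevant vector. The only (cosmetic) differences are that the paper works with norm-density in the separable subspace $Y=\overline{\mathrm{span}}(x_1^*,x_2^*,\dots)\subset X^*$ rather than weak$^*$-density in the dual ball, and indexes the measurable selection by the first functional not vanishing on $g(s)$ rather than on $f(s)$.
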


\begin{proof}
By the Pettis measurability theorem \cite[Theorem 1.1.20]{HNVW1} we can assume 
$X$ to be separable. Let $(x_m)_{m\geq 1}$ be a dense subset of $X$. By the 
Hahn-Banach theorem we can find a sequence $(x_m^*)_{m\geq 1}$ of linear 
functionals on $X$ such that $\langle x_m, x_m^*\rangle = \|x_m\|$ and 
$\|x_m^*\|=1$ for each $m\geq 1$. Let $Y_0 = \mathbb Q-\text{span}(x^*_1,x^*_2, 
\ldots)$, and let $Y = \overline{\text{span}(x^*_1,x^*_2, \ldots)}$ be a 
separable closed subspace of $X^*$. Then $X\hookrightarrow Y^*$ isometrically. 
Fix a set of full measure $S_0$ such that for all $x^* \in Y_0$, $|\langle 
g,x^*\rangle|\leq |\langle f,x^*\rangle|$ on $S_0$. Fix $x^* \in Y$. Let 
$(y_{k})_{k\geq 1}$ be a sequence in $Y_0$ such that $y_{k} \to x^*$ in $Y$ as 
$k\to \infty$. Then on $S_0$ we have that $|\langle g,y^*_{k}\rangle| \to 
|\langle g,x^*\rangle|$ and $|\langle f,y_{k}\rangle| \to |\langle 
f,x^*\rangle|$. Consequently for each $s\in S_0$,
 \begin{equation}\label{eq:discweakdiffsub}
  |\langle g(s),x^*\rangle|\leq |\langle f(s),x^*\rangle|, \;\;\;x^* \in Y.
 \end{equation}
 Therefore the linear functionals $f(s), g(s) \in X \hookrightarrow Y^*$ are 
such that $\ker g(s)\subset \ker f(s)$, and hence by Lemma 
\ref{lemma:linfuncwiththesameker} there exist $a(s)$ defined for each fixed $s 
\in S_0$ such that $g(s) = a(s)f(s)$. By \eqref{eq:discweakdiffsub} one has that 
$|a(s)|\leq 1$. 
 
Let us construct a measurable version of $a$. $Y_0$ is countable since it is a 
$\mathbb Q-\text{span}$ of a countable set. Let $Y_0 = (y_m)_{m\geq 1}$. For 
each $m> 1$ construct $A_m \in \Sigma$ as follows:
 \[
  A_m = \{s \in S: \langle g(s), y_{m} \rangle \neq 0,\langle g(s), y_{m-1} 
\rangle = 0,\ldots,\langle g(s), y_1 \rangle  = 0\}
 \]
 and put $A_1 = \{s\in S: \langle g(s), y_{1} \rangle \neq 0\}$. Obviously on 
the set $S\setminus \cup_{m=1}^{\infty} A_m$ one has that $g = 0$, so one can 
redefine $a := 0$ on $S\setminus \cup_{m=1}^{\infty} A_m$. For each $m\geq 1$ we 
redefine $a:=\frac{\langle g, y_m\rangle}{\langle f, y_m\rangle}$ on $A_m$. Then 
$a$ constructed in such a way is $\Sigma$-measurable.
\end{proof}

\begin{proof}[Proof of Proposition \ref{thm:discweakdiffsub}]
The proposition follows from Lemma \ref{lemma:ajustformeasfunc}: the assumption 
of this lemma holds for $df_n$ and $dg_n$ for any $n\geq1$, and for $f_0$ and 
$g_0$. So according to Lemma \ref{lemma:ajustformeasfunc} there exists a 
sequence $(a_n)_{n\geq 0}$ which is a.s.\ bounded by $1$, such that $dg_n = a_n 
df_n$ for each $n\geq 1$ and $g_0 = a_0f_0$ a.s.\ Moreover, again thanks to 
Lemma~\ref{lemma:ajustformeasfunc}, $a_n$ is $\mathcal F_n$-measurable, so 
$(a_n)_{n\geq 0}$ is adapted.
\end{proof}

\begin{defi}
 Let $E$ be a linear space over the scalar field $\mathbb K$.
 \begin{itemize}
  \item [(i)]A function $f:E \to \mathbb R$ is called  {\em convex} if for each $x, y 
\in E$, $\lambda \in [0,1]$ one has that $f(\lambda x+(1-\lambda)y)\leq \lambda 
f(x)+(1-\lambda)f(y)$.
  \item[(ii)]A function $f:E \to \mathbb R$ is called {\em concave} if for each $x, y 
\in E$, $\lambda \in [0,1]$ one has that $f(\lambda x+(1-\lambda)y)\geq \lambda 
f(x)+(1-\lambda)f(y)$.
  \item[(iii)] A function $f:E\times E \to \mathbb R$ is called {\em biconcave} 
if for each $x, y \in E$ one has that the mappings $e\mapsto f(x, e)$ and 
$e\mapsto f(e,y)$ are concave.
  \item[(iv)] A function $f:E\times E \to \mathbb R$ is called {\em zigzag-concave} if 
for each $x, y\in E$ and $\eps \in \mathbb K$, $|\eps| \leq 1$ the function 
$z\mapsto f(x+z, y+\eps z)$ is concave.
 \end{itemize}
\end{defi}

Note that our definition of zigzag-concavity is a bit different from the classical 
one (e.g.\ as in \cite{HNVW1}): usually one sets in the definition $|\eps|=1$. The reader should pay 
attention to this extension: thanks to this additional property Theorem \ref{thm:Burkholder} below is more general than \cite[Theorem 
4.5.6]{HNVW1}. This improvement will later allow us to prove the main theorem of 
this section.

In \cite{Burk86} Burkholder showed that the UMD property is equivalent to the 
existence of a certain biconcave function $V:X\times X \to \mathbb R$. With a 
slight variation of his argument (see Remark \ref{rem:fromUtoV}) one can also 
show the equivalence with the existence of a certain zigzag-concave function with a 
better structure.

\begin{theorem}[Burkholder]\label{thm:Burkholder}
 For a Banach space $X$ the following are equivalent
 \begin{enumerate}
  \item $X$ is a UMD Banach space;
  \item for each $p\in (1,\infty)$ there exists a constant $\beta>0$ and a 
zigzag-concave function $U:X\times X \to \mathbb R$ such that
  \begin{equation}\label{eq:ineqonU}
     U(x,y)\geq \|y\|^p - \beta^p\|x\|^p,\;\;\;x,y\in X.
  \end{equation}
 \end{enumerate}
 The smallest admissible $\beta$ for which such $U$ exists is $\beta_{p, X}$.
\end{theorem}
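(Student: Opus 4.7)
Proof plan. This is a strengthening of the classical Burkholder theorem \cite[Theorem~4.5.6]{HNVW1}, where zigzag-concavity is required only for $|\eps|=1$; the overall structure of the argument is the same, and the new content lies in one splicing step. Two implications and the sharpness of the constant need to be verified.

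\smallskip

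For (2) $\Rightarrow$ (1), fix a zigzag-concave $U$ satisfying \eqref{eq:ineqonU}, a martingale difference sequence $(d_j)_{j=1}^n$, and scalars $\eps_j$ with $|\eps_j|=1$; set $f_n=\sum_{j=1}^n d_j$ and $g_n=\sum_{j=1}^n \eps_j d_j$. Each $\eps_j$ is a constant and hence $\F_{j-1}$-measurable, so conditional Jensen applied to the concave map $z\mapsto U(f_{j-1}+z,\,g_{j-1}+\eps_j z)$ together with $\E[df_j\mid \F_{j-1}]=0$ yields $\E[U(f_j,g_j)\mid \F_{j-1}]\le U(f_{j-1},g_{j-1})$. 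Iterating and invoking \eqref{eq:ineqonU} gives
\[
\E\|g_n\|^p-\beta^p\,\E\|f_n\|^p \le U(0,0).
\]
Applying this estimate to the rescaled pair $(\lambda f_n,\lambda g_n)$ and sending $\lambda\to\infty$ removes the constant $U(0,0)$ via the $p$-homogeneity of $\|\cdot\|^p$; hence $\beta_{p,X}\le\beta$.

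\smallskip

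For (1) $\Rightarrow$ (2) with $\beta=\beta_{p,X}$, set
\[
U(x,y):=\sup\,\E\bigl[\|y+N_\infty\|^p-\beta_{p,X}^p\,\|x+M_\infty\|^p\bigr],
\]
the supremum taken over all pairs of bounded $X$-valued martingales $(M,N)$ with $M_0=N_0=0$ and $dN_j=a_j\,dM_j$ for some adapted scalar sequence $(a_j)$ with $|a_j|\le 1$. The lower bound in \eqref{eq:ineqonU} is immediate from the admissible pair $M=N=0$. Finiteness---and hence the fact that $\beta_{p,X}$ is already sharp---reduces to the observation that the UMD inequality extends from $\{-1,+1\}$-valued (resp.\ $|\cdot|=1$-valued in the complex case) transforms to transforms with $|a_j|\le 1$ without enlarging the constant: on an enlarged probability space realize each $a_j$ as $\E[\eps_j\mid\F_{j-1}]$ for a $\{-1,+1\}$-valued $\eps_j$ that is conditionally independent given $\F_{j-1}$, form $\wt N_n=\sum_{j=1}^n \eps_j\,dM_j$, note $N_n=\E[\wt N_n\mid\F_n]$, and apply conditional Jensen together with UMD.

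\smallskip

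The crucial point is the strong zigzag-concavity of $U$. Fix $x,y\in X$, $\eps\in\K$ with $|\eps|\le 1$, $z_0,z_1\in X$, and $\lambda\in[0,1]$; set $z_\lambda=\lambda z_0+(1-\lambda)z_1$. Choose $\delta$-near-optimal admissible pairs $(M^i,N^i)$ for $U(x+z_i,y+\eps z_i)$, $i=0,1$, and splice them on an enlarged probability space: at time $1$ let $\wt M_1$ equal $z_0-z_\lambda$ with probability $\lambda$ and $z_1-z_\lambda$ with probability $1-\lambda$ (mean zero), and set $\wt N_1=\eps\,\wt M_1$; thereafter, on branch $i$, continue with a conditionally independent copy of $(M^i,N^i)$. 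The resulting pair is admissible for $U(x+z_\lambda,y+\eps z_\lambda)$ precisely because the first-step slope $\eps$ is only required to satisfy $|\eps|\le 1$---this is exactly where the stronger zigzag-concavity enters and the only departure from \cite[Theorem~4.5.6]{HNVW1}. Evaluating the objective and letting $\delta\to 0$ yields
\[
U(x+z_\lambda,y+\eps z_\lambda)\ge \lambda U(x+z_0,y+\eps z_0)+(1-\lambda)U(x+z_1,y+\eps z_1),
\]
the desired concavity. The main obstacle is this splicing step together with the auxiliary passage from $\{-1,+1\}$ to $|a_j|\le 1$ coefficients, which is what keeps the sharp constant $\beta_{p,X}$ in place.
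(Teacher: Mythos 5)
Your construction is essentially the paper's own: the paper defines $U(x,y)$ as the supremum of $\E\bigl(\|g_{\infty}\|^p-\beta^p\|f_{\infty}\|^p\bigr)$ over pairs of simple martingales started at $(x,y)$ with $dg_n=\eps_n df_n$, $|\eps_n|\le 1$, and then defers to \cite[Theorem 4.5.6]{HNVW1} for the lower bound, the finiteness, the splicing argument, and the converse direction via conditional Jensen; your observation that the enlarged class is needed exactly to admit a first-step slope $\eps$ with $|\eps|\le 1$ is precisely the paper's stated ``only difference.'' The rescaling trick in (2)$\Rightarrow$(1) is a standard and correct way to dispose of $U(0,0)$.

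One point needs care. You define the admissible class with an \emph{adapted} scalar sequence $(a_j)$, $|a_j|\le 1$. If ``adapted'' means $\mathcal F_j$-measurable, then the finiteness of $U$ with the constant $\beta_{p,X}$ is precisely the assertion of Theorem \ref{thm:forseq} --- the theorem the paper proves \emph{by means of} this Burkholder function --- so the argument would be circular. Your own finiteness argument tacitly assumes $a_j$ is $\mathcal F_{j-1}$-measurable (writing $a_j=\E[\eps_j\mid\mathcal F_{j-1}]$ forces predictability), and predictable coefficients are indeed all you need: the class remains closed under splicing (the post-splice coefficients are $\mathcal F_1$-measurable), and finiteness then follows from the martingale transform theorem \cite[Theorem 4.2.25]{HNVW1}. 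This is effectively what the paper does, using deterministic scalars $(\eps_n)$ together with the convexity-in-$(\eps_n)$ remark to reduce to $|\eps_n|=1$. Relatedly, your decoupling sketch for passing from unimodular to $|a_j|\le 1$ coefficients does not quite close: the decoupled process $\widetilde N_n=\sum_{j\le n}\eps_j\,dM_j$ is a transform of $M$ by signs that are neither deterministic nor predictable with respect to the enlarged filtration, so ``apply UMD'' is not directly available there; it is cleaner to cite the martingale transform theorem outright.
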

\begin{proof}
 The proof is essentially the same as the one given in \cite[Theorem 
4.5.6]{HNVW1}, but the construction of $U$ is a bit different. The only 
difference is allowing $|\eps|\leq 1$ instead of $|\eps|=1$ for the appropriate 
scalars $\eps$.

 For each $x, y\in X$ we define $\mathbb S(x,y)$ as a set of all pairs $(f,g)$ 
of discrete martingales such that
 \begin{enumerate}
  \item $f_0\equiv x$, $g_0\equiv y$;
  \item there exists $N\geq 0$ such that $df_n\equiv 0$, $dg_n\equiv 0$ for 
$n\geq N$;
  \item $(dg_n)_{n\geq 1} = (\eps_ndf_n)_{n\geq 1}$ for some sequence of scalars 
$(\eps_n)_{n\geq 1}$ such that $|\eps_n|\mathbf{\leq} 1$ for each $n\geq 1$.
 \end{enumerate}
Then we define $U:X\times X \to \mathbb R \cup \{\infty\}$ as follows:
\begin{equation}\label{eq:formulaofU}
 U(x,y) := \sup\bigl\{\mathbb E(\|g_{\infty}\|^p - \beta^p \|f_{\infty}\|^p): 
(f,g)\in \mathbb S(x,y)\bigr\}.
\end{equation}
The rest of the proof repeats the one given in \cite[Theorem 4.5.6]{HNVW1}.
\end{proof}

\begin{remark}
 Notice that function $U$ constructed above coincides with the one in the proof of 
\cite[Theorem 4.5.6]{HNVW1}. This is due to the fact that the function 
 $$
 (\eps_n)_{n=1}^N \mapsto \Bigl(\mathbb E \Bigl\|g_0 + \sum_{n=1}^N\eps_n 
df_n\Bigr\|^p\Bigr)^{\frac 1p}
 $$
 is convex on the $\mathbb K$-cube $\{(\eps_n)_{n=1}^N: 
|\eps_1|,\ldots,|\eps_N|\leq 1\}$ because of the triangle inequality, therefore 
it takes its supremum on the set of the domain endpoints, namely on the set 
$\{(\eps_n)_{n=1}^N: |\eps_1|,\ldots,|\eps_N|= 1\}$.
\end{remark}

\begin{remark}\label{rem:propsofU}
 Analogously to \cite[(4.31)]{HNVW1} by \eqref{eq:formulaofU} we have 
that $U(\alpha x, \alpha y) = |\alpha|^p U(x, y)$ for each $x, y\in X$, $\alpha 
\in \mathbb K$. Therefore $U(0, 0) = 0$, and hence for each $x\in X$ and each 
scalar $\eps$ such that $|\eps|\leq 1$, by the zigzag-concavity of $U$ in the 
point~$(0,0)$
 \begin{equation}\label{eq:propofU(x,epsx)}
    U(x, \eps x) = \frac 12U(0+x,0+ \eps x) + \frac 12U(0-x, 0-\eps x)\leq 
U(0,0)=0.
 \end{equation}
Let $\xi, \eta\in L^0(\Omega; X)$ be such that $|\langle \eta, x^*\rangle|\leq 
|\langle \xi, x^*\rangle|$ for each $x^*\in X^*$ a.s. Then thanks to Lemma 
\ref{lemma:ajustformeasfunc} and \eqref{eq:propofU(x,epsx)}, $U(\xi,\eta)\leq 0$ 
a.s.
\end{remark}

\begin{remark}\label{rem:fromUtoV}
 For each zigzag-concave function $U:X\times X \to \mathbb R$ one can construct 
a biconcave function $V:X\times X \to \mathbb R$ as follows:
\begin{equation}\label{eq:defofV}
 V(x,y) = U\Bigl(\frac{x-y}{2},\frac{x+y}2\Bigr),\;\;\; x,y\in X.
\end{equation}
Indeed, by the definition of $U$, for each $x,y\in X$ the functions
\begin{align*}
 z\mapsto V(x+z,y) &= U\Bigl(\frac{x-y}{2} + \frac{z}2,\frac{x+y}2 + 
\frac{z}2\Bigr),\\
  z\mapsto V(x,y+z) &= U\Bigl(\frac{x-y}{2} - \frac{z}2,\frac{x+y}2 + 
\frac{z}2\Bigr)
\end{align*}
are concave. Moreover, for each $x,y\in X$ and $a,b \in \mathbb K$ such that 
$|a+b|\leq |a-b|$ one has that the function
$$
z\mapsto V(x+az,y+bz) = U\Bigl(\frac{x-y}{2} + \frac{(a-b)z}2,\frac{x+y}2 + 
\frac{(a+b)z}2\Bigr)
$$
is concave since $\bigl| \frac {a+b}{a-b}\bigr|\leq 1$.
\end{remark}

\begin{remark}\label{rem:contofUandV}
 Due to the explicit representation \eqref{eq:formulaofU} of $U$ we can show 
that for each $x_1, x_2, y_1, y_2\in X$,
 $$
 |U(x_1, y_1) - U(x_2, y_2)|\leq \|x_1-x_2\|^p + \beta_{p, X}^p \|y_1-y_2\|^p.
 $$
 Therefore $U$ is continuous, and consequently $V$ is continuous as well.
\end{remark}

\begin{remark}\label{rem:findimXforV}
Notice that if $X$ is finite dimensional then by Theorem 2.20 and 
Proposition 2.21 in \cite{FolHarm} there exists a unique  translation-invariant 
measure $\lambda_X$ on $X$ such that $\lambda_X(\mathbb B_X) = 1$ for the unit 
ball $\mathbb B_X$ of $X$. We will call $\lambda_X$ a {\it Lebesgue measure}. 
Thanks to the Alexandrov theorem \cite[Theorem 6.4.1]{EG} $x\mapsto V(x, y)$ and 
$y\mapsto V(x,y)$ are a.s.\ Fr\'echet differentiable with respect to 
$\lambda_X$, and by \cite[Proposition 3.1]{JT} and Remark 
\ref{rem:contofUandV} for a.a.\ $(x,y)\in X\times X$ for each $u,v\in X$ there 
exists the directional derivative $\frac{\partial V(x+tu,y+tv)}{\partial t}$. 
Moreover, 
\begin{equation}\label{eq:dirderivative}
 \frac{\partial V(x+tu,y+tv)}{\partial t} = \langle \partial_x V(x,y), 
u\rangle+\langle \partial_y V(x,y), v\rangle,
\end{equation}
where $\partial_x V$ and $\partial_y V$ are the corresponding Fr\'echet 
derivatives with respect to the first and the second variable. Thanks to 
\eqref{eq:dirderivative} and Remark \ref{rem:fromUtoV} one obtains that for 
a.e.\ $(x,y)\in X\times X$, for all $z\in X$ and $a,b\in \mathbb K$ such that 
$|a+b|\leq |a-b|$,
\begin{equation}\label{eq:dirderivativeV}
\begin{split}
 V(x+az,y+bz)&\leq V(x,y) + \frac{\partial V(x+atz,y+btz)}{\partial t}\\
 &= V(x,y)+a\langle \partial_x V(x,y), z\rangle+b\langle \partial_y V(x,y), 
z\rangle.
 \end{split}
\end{equation}
\end{remark}

\begin{lemma}\label{lem:V_xV_yarenice}
 Let $X$ be a finite dimensional Banach space, $V:X\times X\to \mathbb R$ be as 
defined in \eqref{eq:defofV}. Then there exists $C>0$ which depends only on $V$ 
such that for a.e.\ pair $x, y\in X$,
 $$
 \|\partial_xV(x, y)\|, \|\partial_yV(x, y)\| \leq C(\|x\|^{p-1} + \|y\|^{p-1}).
 $$
\end{lemma}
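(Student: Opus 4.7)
The plan is to combine the biconcavity of $V$ with its joint $p$-homogeneity (Remark~\ref{rem:propsofU}) and its continuity on $X \times X$ (Remark~\ref{rem:contofUandV}) via a standard slope/rescaling argument. I work at any pair $(x, y) \in X \times X$ at which $V$ is coordinate-wise Fr\'echet differentiable; by Remark~\ref{rem:findimXforV} this set has full Lebesgue measure.

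First I recall that for a concave function $\phi: X \to \mathbb R$ Fr\'echet differentiable at $x_0$, the graph lies below any tangent, so for every unit $u \in X$ and every $T > 0$
\[
\frac{\phi(x_0 + Tu) - \phi(x_0)}{T} \;\leq\; \langle \nabla \phi(x_0), u\rangle \;\leq\; \frac{\phi(x_0) - \phi(x_0 - Tu)}{T}.
\]
Applying this coordinate-by-coordinate to the biconcave $V$ (Remark~\ref{rem:fromUtoV}), and bounding each side by the triangle inequality, yields
\[
|\langle \partial_x V(x, y), u\rangle| \;\leq\; \frac{2 \max\bigl(|V(x+Tu, y)|,\,|V(x, y)|,\,|V(x-Tu, y)|\bigr)}{T},
\]
together with the analogous bound for $\partial_y V$.

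Next I exploit $p$-homogeneity: from $V(a, b) = U((a-b)/2, (a+b)/2)$ and Remark~\ref{rem:propsofU} one has $V(\lambda a, \lambda b) = |\lambda|^p V(a, b)$ for every $\lambda \in \mathbb K$. Since $X$ is finite-dimensional, the continuous function $V$ is bounded on every compact set; set $M := \sup\{|V(a, b)| : \|a\|, \|b\| \leq 2\} < \infty$. For $(x, y) \neq 0$ I now choose $T := \|x\| + \|y\|$; then $\|x/T\|, \|y/T\| \leq 1$ and $\|x/T \pm u\| \leq 2$ for any unit $u$, so rescaling gives $|V(x \pm Tu, y)|, |V(x, y)| \leq M T^p$. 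Combined with Step~1 this yields
\[
|\langle \partial_x V(x, y), u\rangle| \;\leq\; 2 M T^{p-1} \;=\; 2 M (\|x\| + \|y\|)^{p-1}.
\]
Using $(s+t)^{p-1} \leq \max(1, 2^{p-2})(s^{p-1} + t^{p-1})$ and taking the supremum over $\|u\| = 1$ produces the stated inequality with a constant $C$ depending only on $V$ and $p$. The case $(x, y) = (0, 0)$ is trivial since $V \equiv 0$ there and $p$-homogeneity with $p > 1$ forces both Fr\'echet derivatives to vanish. The argument for $\partial_y V$ is identical with the two coordinates swapped.

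I do not anticipate any genuine obstacle: the only non-routine idea is that the joint $p$-homogeneity reduces bounds on $|V|$ at an arbitrary $(x,y) \neq 0$ to a single sup of $|V|$ on a fixed compact set, thereby turning the qualitative continuity of Remark~\ref{rem:contofUandV} into a quantitative growth estimate. All other ingredients (biconcavity, homogeneity, a.e.\ differentiability) are already supplied by Remarks~\ref{rem:propsofU}--\ref{rem:findimXforV}.
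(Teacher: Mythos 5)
Your proof is correct and rests on the same three ingredients as the paper's: concavity of $V$ in each variable to bound the gradient by values of $V$, continuity plus compactness to bound $|V|$ on a fixed ball, and $p$-homogeneity to rescale. The only (cosmetic) difference is that you use a direct two-sided slope inequality with $T=\|x\|+\|y\|$, whereas the paper first bounds the gradient on the unit ball by contradiction and then rescales; both are essentially the same argument.
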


\begin{proof}
 We show the inequality only for $\partial_xV$, the proof for $\partial_yV$ being 
analogous. First we prove that there exists $C>0$ such that $\|\partial_xV(x, 
y)\|\leq C$ for a.e.\ $x, y\in X$ such that $\|x\|, \|y\|\leq 1$. Let us show this 
by contradiction. Suppose that such $C$ does not exist. Since $V$ is continuous 
by Remark \ref{rem:contofUandV}, and since a unit ball in $X$ is a compact set, 
there exists $K>0$ such that $|V(x, y)|<K$ for all $x, y\in X$ such that $\|x\|, 
\|y\|\leq 2$. Let $x_0, y_0\in X$ be such that $\|x_0\|, \|y_0\| \leq 1$ and 
$\|\partial_xV(x_0, y_0)\|>3K$. Therefore there exists $z\in X$ such that $\|z\| 
= 1$ and $\langle \partial_xV(x_0, y_0), z\rangle<-3K$. Hence we have that 
$\|x_0 + z\|\leq2$ and because of the concavity of $V$ in the first variable
 \[
  V(x_0 + z, y_0) \leq V(x_0, y_0) + \langle \partial_xV(x_0, y_0), z\rangle\leq 
K - 3K \leq -2K.
 \]
Consequently, $|V(x_0 + z, y_0)|>K$, which contradicts with our suggestion.

Now fix $C>0$ such that $|\partial_xV(x, y)|\leq C$ for all $x, y\in X$ such 
that $\|x\|, \|y\|\leq 1$. Fix $x, y\in X$. Without loss of generality assume that 
$\|x\|\geq \|y\|$. Let $L=\|x\|$. Then $\|\partial_xV\bigl(\frac xL, \frac 
yL\bigr)\| \leq C$. Let $z\in X$ be such that $\|z\|=1$. Then by Remark~\ref{rem:propsofU},
\begin{align*}
  |\langle \partial_xV (x, y), z\rangle|\!&=\!\biggl| \lim_{t\to 0}\frac 
{V(x\!+\!tz, y)\!-\!V(x, y)}{t}\biggr
  |\!=\!\biggl| \lim_{t\to 0}\frac {L^pV(\frac xL\!+\!\frac tL z, \frac 
yL)\!-\!L^pV(\frac xL, \frac yL)}{L\frac tL}\biggr|\\
  &=L^{p-1}\biggl|\lim_{t\to 0}\frac {V(\frac xL\!+\!tz, \frac yL)\!-\!V(\frac 
xL, \frac yL)}{t}\biggr|\!=\!L^{p-1}\Bigl|\Bigl\langle \partial_xV \Bigl(\frac 
xL, \frac yL\Bigr), z\Bigr\rangle\Bigr|\\
  &\leq L^{p-1} C \leq C(\|x\|^{p-1} + \|y\|^{p-1}).
\end{align*}
Therefore since $z$ was arbitrary, $\| \partial_xV (x, y)\|\leq C(\|x\|^{p-1} + 
\|y\|^{p-1})$. The case $\|x\|<\|y\|$ can be done in the same way.
\end{proof}

\bigskip

\begin{lemma}\label{lemma:abscontmart}
 Let $X$ be a finite dimensional Banach space, $1<p<\infty$, $(f_n)_{n\geq 0}$, 
$(g_n)_{n\geq 0}$ be $X$-valued martingales on a probability space $(\Omega, 
\mathcal F, \mathbb P)$ with a filtration $\mathbb F = (\mathcal F_n)_{n\geq 0}$ 
and assume that $(g_n)_{n\geq 0}$ is weakly differentially subordinated to 
$(f_n)_{n\geq 0}$. Let $Y=X\oplus \mathbb R$ be the Banach space with the norm as follows: 
$$
\|(x,r)\|_Y:= (\|x\|_X^p + |r|^p)^{\frac 1p},\;\;\; x\in X, r\in \mathbb R.
$$
Then there exist two sequences $(f^m)_{m\geq 1}$ and $(g^m)_{m\geq 1}$ of 
$Y$-valued martingales on an enlarged probability space $(\overline{\Omega}, 
\overline{\mathcal F}, \overline{\mathbb P})$ with an enlarged filtration 
$\overline{\mathbb F} = (\overline{\mathcal F}_n)_{n\geq 0}$ such that
 \begin{enumerate}
  \item $f^m_n$, $g^m_n$ have absolutely continuous distributions with respect 
to the Lebesgue measure on $Y$ for each $m\geq 1$ and $n\geq 0$;
  \item $f^m_n \to (f_n,0)$, $g^m_n \to (g_n,0)$ pointwise as $m\to \infty$ for 
each $n\geq 0$;
  \item if for some $n\geq 0$ $\mathbb E\|f_n\|^p<\infty$, then for each $m\geq 
1$ one has that $\mathbb E\|f^m_n\|^p<\infty$ and $\mathbb 
E\|f^m_n-(f_n,0)\|^p\to 0$ as $m\to \infty$;
  \item if for some $n\geq 0$ $\mathbb E\|g_n\|^p<\infty$, then for each $m\geq 
1$ one has that $\mathbb E\|g^m_n\|^p<\infty$ and $\mathbb 
E\|g^m_n-(g_n,0)\|^p\to 0$ as $m\to \infty$;
  \item for each $m\geq 1$ we have that $(g^m_n)_{n\geq 0}$ is weakly 
differentially subordinated to $(f^m_n)_{n\geq 0}$.
 \end{enumerate}
\end{lemma}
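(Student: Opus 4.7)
The plan is to construct $(f^m,g^m)$ as small independent-Gaussian perturbations of $(f,g)$ embedded into $Y$, using a regularized subordination coefficient that avoids degeneracy. On an auxiliary probability space introduce two independent i.i.d.\ sequences: $(\xi_k)_{k\geq 0}$ of centered non-degenerate $Y$-valued Gaussians with $\E\|\xi_k\|^p<\infty$, and $(\eta_k)_{k\geq 0}$ of $\mathrm{Uniform}([-1,1])$ random variables. Enlarge $(\Omega,\F,\P)$ and set $\overline\F_n:=\F_n\vee\sigma(\xi_0,\ldots,\xi_n,\eta_0,\ldots,\eta_{n+1})$, so that each $\eta_k$ is ``predictable'' (i.e.\ $\overline\F_{k-1}$-measurable). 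By Proposition \ref{thm:discweakdiffsub} fix the $\F$-adapted $(a_k)$ with $|a_k|\leq 1$, $dg_k=a_k df_k$, $g_0=a_0 f_0$ (convention $df_0:=f_0$), and define
\[
\tilde a_k:=\Bigl(1-\tfrac{1}{m}\Bigr)a_k+\tfrac{1}{m}\eta_k,\quad f^m_n:=(f_n,0)+\tfrac{1}{m}\sum_{k=0}^n\xi_k,\quad g^m_n:=\sum_{k=0}^n\tilde a_k\,df^m_k,
\]
with $df^m_0:=f^m_0$. Then $|\tilde a_k|\leq 1$, $\tilde a_k$ is $\overline\F_k$-measurable, and $\tilde a_k\neq 0$ almost surely (because $\eta_k$ has a continuous distribution).

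Weak differential subordination (property~(5)) is immediate from $dg^m_k=\tilde a_k df^m_k$ and Proposition~\ref{thm:discweakdiffsub}. The martingale property of $g^m$ uses the decomposition
\[
\tilde a_k df^m_k = \Bigl(1-\tfrac{1}{m}\Bigr)(dg_k,0)+\tfrac{1}{m}\eta_k(df_k,0)+\tfrac{1}{m}\tilde a_k\xi_k,
\]
each summand of which has vanishing $\overline\F_{k-1}$-conditional expectation---the first because $g$ is an $\F$-martingale, the second because $\eta_k$ is $\overline\F_{k-1}$-measurable while $df_k$ has $\F_{k-1}$-conditional mean zero, and the third because $\xi_k$ is independent of $\overline\F_{k-1}\vee\F_k$ and centered; the $f^m$ case is simpler. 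The convergence statements (2)--(4) follow from the identity $g^m_n-(g_n,0)=\tfrac{1}{m}\sum_{k=0}^n\bigl[(\eta_k-a_k)(df_k,0)+\tilde a_k\xi_k\bigr]$ whose $L^p$-norm is $O((n+1)/m)\to 0$ under the integrability hypotheses, and analogously for $f^m$; pointwise convergence is immediate since each $\omega$-dependent sum is finite.

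The substantive point is absolute continuity (1). Conditionally on $\F_n\vee\sigma(\eta_0,\ldots,\eta_{n+1})$, the non-Gaussian part of $g^m_n$ is fixed and its Gaussian part equals $\tfrac{1}{m}\sum_{k=0}^n\tilde a_k\xi_k$, a centered $Y$-valued Gaussian with covariance $\tfrac{1}{m^2}\bigl(\sum_{k=0}^n|\tilde a_k|^2\bigr)\Sigma$, where $\Sigma$ is the non-degenerate covariance of $\xi_0$. Since $\tilde a_k\neq 0$ a.s.\ for every $k$, this covariance is strictly positive-definite, so the conditional law is absolutely continuous on~$Y$, and averaging preserves absolute continuity; the analogous statement for $f^m_n$ is simpler (all coefficients equal~$1$). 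The key insight---and the reason for the auxiliary uniform admixture $\eta_k/m$ in the convex combination defining $\tilde a_k$---is that the naive choice $\tilde a_k=a_k$ allows the degenerate event $\{a_k=0,\,df_k\neq 0\text{ for all }k\leq n\}$ to have positive probability, on which $g^m_n=(g_n,0)=0$ would be a point mass; the regularization simultaneously maintains $|\tilde a_k|\leq 1$, the martingale property, and the subordination relation while eliminating this degeneracy.
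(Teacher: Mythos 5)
Your construction is genuinely different from the paper's, and most of it is sound: the filtration bookkeeping, the martingale property of $f^m$ and $g^m$, the subordination relation $dg^m_k=\tilde a_k df^m_k$ with $|\tilde a_k|\leq 1$, and the conditional-Gaussian argument for absolute continuity (your identification of the degeneracy $\{a_k=0\}$ and the cure via the uniform admixture is exactly the right concern). But there is a genuine gap in property (4). Your perturbation of $g$ is
\[
g^m_n-(g_n,0)=\frac1m\sum_{k=0}^n\bigl[(\eta_k-a_k)(df_k,0)+\tilde a_k\xi_k\bigr],
\]
which involves the increments $df_k$ of $f$, and these are \emph{not} controlled in $L^p$ by the hypothesis of (4), which only assumes $\mathbb E\|g_n\|^p<\infty$. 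Concretely, take $g\equiv 0$ (so $a_k\equiv 0$ and $g$ is weakly differentially subordinated to any $f$) and let $f$ be an $L^1$- but not $L^p$-martingale, say $df_1=Z$ with $\mathbb E|Z|<\infty$, $\mathbb E|Z|^p=\infty$. Then $\mathbb E\|g_1\|^p=0<\infty$, but $g^m_1$ contains the term $\tfrac1m\eta_1(df_1,0)$ and, conditioning on everything except $\eta_1$, one checks $\mathbb E\|g^m_1\|^p=\infty$. So the conclusion ``$\mathbb E\|g^m_n\|^p<\infty$'' of (4) fails, and a fortiori the $L^p$-convergence. (This failure is harmless for the way the lemma is invoked in Theorem \ref{thm:forseq}, where the case $\mathbb E\|f_n\|^p=\infty$ is vacuous, but it means the lemma as stated is not proved.) Note also that you cannot repair this by keeping $\tilde a_k=a_k$ on the $(df_k,0)$-part and randomizing only the Gaussian part, since then $dg^m_k$ is no longer a scalar multiple of $df^m_k$ and property (5) is lost.

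The paper avoids this by perturbing \emph{multiplicatively} rather than additively: it first makes the martingales a.s.\ nonzero by appending the extra coordinate ($\tilde f^{\eps}_n=(f_n,\eps)$, $\tilde g^{\eps}_n=(g_n,\eps a_0)$ after arranging $a_0\neq0$), and then applies a random operator $Q_m=I+\tfrac1m T$ with $T(b,y)=\langle b,y\rangle b$, $b$ uniform on the unit ball of $Y$. Since $Q_m$ is a uniformly bounded operator applied to $\tilde g^{\eps}_n$ itself, one gets $\|g^m_n\|\lesssim\|g_n\|+\eps$, so all $L^p$ statements transfer directly from $g$ (not from $f$); subordination is preserved because $Q_m$ commutes with multiplication by $a_n$; and absolute continuity follows because $y\mapsto\langle b,y\rangle b$ has an absolutely continuous law for each fixed $y\neq0$. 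If you want to salvage your additive scheme, you would need the perturbation of $g^m_n$ to be dominated by a quantity controlled by $\|g_n\|$ alone, which the coupling through $df_k$ prevents.
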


\begin{proof}
First of all let us show that we may assume that $f_0$ and $g_0$ are nonzero 
a.s.\ For this purpose we can modify $f_0$ and $g_0$ as follows:
\begin{align*}
 f_0^{\eps} &= f_0 + \eps x \mathbf 1_{f_0 = 0},\\
 g_0^{\eps} &= g_0 + \eps x \mathbf 1_{f_0 = 0} + \eps f_0 \mathbf 1_{g_0 = 0, 
f_0 \neq 0},
\end{align*}
where $\eps>0$ is arbitrary and $x\in X$ is fixed. This small perturbation does 
not destroy the weak differential subordination property. Moreover, if we let 
$f_n^{\eps}:= f_0^{\eps} + \sum_{k=1}^n df_k$, $g_n^{\eps}:= g_0^{\eps} + 
\sum_{k=1}^n dg_k$ for any $n\geq 1$, then $f_n^{\eps}\to f_n$ and 
$g_n^{\eps}\to g_n$ a.s., and $f_n^{\eps}- f_n \to 0$ and $g_n^{\eps}- g_n\to 0$ 
in $L^p(\Omega; X)$ as $\eps \to 0$.

From now we assume that $f_0$ and $g_0$ are nonzero a.s.\ This in fact means that random variable $a_0$ 
from Proposition \ref{thm:discweakdiffsub} is nonzero a.s.\ as well.
 Let  $\mathbb B_Y$ be the unit ball of $Y$, $(\mathbb B_Y, \mathcal B(\mathbb 
B_Y), \hat{\mathbb P})$ be a probability space such that $\hat{\mathbb P}:= 
\lambda_Y|_{\mathbb B_Y}$ has the uniform Lebesgue distribution on $\mathbb B_Y$ 
(see Remark \ref{rem:findimXforV}). Fix some scalar product $\langle 
\cdot,\cdot\rangle:Y\times Y\to \mathbb R$ in $Y$. We will construct a random 
operator $T:\mathbb B_Y\to\mathcal L(Y)$ as follows: 
 $$
 T(b,y) := \langle b,y\rangle b\;\;\;\;\;\;\; b\in \mathbb B_Y, y\in Y.
 $$
 Note that for each fixed $b\in \mathbb B_Y$ the mapping $y\mapsto \langle 
b,y\rangle b$ is a linear operator on $Y$. Moreover, 
 \begin{equation}\label{eq:abscontmartnormest}
  \sup_{b\in \mathbb B_Y}\|T(b,\cdot)\|_{\mathcal L(Y)}< \infty.
 \end{equation}
 Now let $(\overline {\Omega}, \overline{\mathcal F}, \overline {\mathbb P}) := 
(\Omega \times \mathbb B_Y, \mathcal F \otimes \mathcal B(\mathbb B_Y), \mathbb 
P \otimes \hat{\mathbb P})$. For each $m\geq 1$ define an operator-valued 
function $Q_m:\overline {\Omega}\to \mathcal L(Y)$ as follows: $Q_m := I + \frac 
1m T$. 
 
 Fix $\eps>0$. For each $n\geq 0$ define $\tilde f^{\eps}_n := (f_n,\eps)$, 
$\tilde g^{\eps}_n := (g_n,\eps a_0)$. Then $(\tilde f^{\eps}_n)_{n\geq 0}$ and 
$(\tilde g^{\eps}_n)_{n\geq 0}$ are $Y$-valued martingales which are nonzero 
a.s.\ for each $n\geq 0$ and are such that $(\tilde g^{\eps}_n)_{n\geq 0}$ is 
weakly differentially subordinated to $(\tilde f^{\eps}_n)_{n\geq 0}$. 
 For each $m\geq 1$ define $Y$-valued martingales $f^m$ and $g^m$ in the 
following way:
 \begin{align*}
  f^m_n &:= Q_m \tilde f^{\eps}_n,\;\;m\geq  1, n\geq 0,\\
   g^m_n &:= Q_m \tilde g^{\eps}_n,\;\;m\geq  1, n\geq 0.
 \end{align*}
 Let us illustrate that for each $m\geq 1$, $f^m$ and $g^m$ are martingales with 
respect to the filtration $\overline{\mathbb F} = (\overline{\mathcal 
F}_n)_{n\geq 0}:=(\mathcal F_n\otimes \mathcal B(\mathbb B_Y))_{t\geq 0}$: for 
each $n\geq 1$ we have
 \begin{align*}
   \mathbb E(f^m_n| \overline{\mathcal F}_{n-1}) = \mathbb E(Q_m\tilde 
f^{\eps}_n| \mathcal F_{n-1}\otimes \mathcal B(\mathbb B_Y))
 &\stackrel{(i)}=Q_m\mathbb E(\tilde f^{\eps}_n| \mathcal F_{n-1}\otimes 
\mathcal B(\mathbb B_Y))\\
 &\stackrel{(ii)}= Q_m\tilde f^{\eps}_{n-1} =f^m_{n-1},
 \end{align*}
 where $(i)$ holds since $Q_m$ is $\mathcal B(\mathbb B_Y)$-measurable, and 
$(ii)$ holds since $\tilde f^{\eps}_n$ is independent of $\mathcal B(\mathbb 
B_Y)$.
 The same can be proven for $g^m$.
Thanks to \eqref{eq:abscontmartnormest} one has that $\lim_{m\to 
\infty}\sup_{b\in \mathbb B_Y}\|Q_m - I\|_{\mathcal L (Y)}=0$ and hence (2), (3) 
and (4) hold for $\tilde f^{\eps}$ and $\tilde g^{\eps}$. 

Let us prove (5). For each $m\geq 1$ and $n\geq 1$ one has:
\[
 dg^m_n = dQ_m\tilde g^{\eps}_n = dQ_ma_n\tilde f^{\eps}_n = a_ndQ_m\tilde 
f^{\eps}_n =  a_ndf^m_n.
\]
The same holds for $g^m_0$ and $f^m_0$.
 
 Now we will show (1). Let us fix a set $A\subset Y$ of Lebesgue measure zero. 
Then for each fixed $n\geq 0$ and $m\geq 1$,
 \begin{equation}\label{eq:abscontmartwhy0}
 \begin{split}
  \mathbb E \mathbf 1_{f^m_n \in A} &= \int_{\Omega}\int_{\mathbb B_Y} \mathbf 
1_{\tilde f^{\eps}_n + \frac {1}{m}\langle b,\tilde f^{\eps}_n\rangle b \in 
A}\ud\hat{\mathbb P}(b)\ud\mathbb P\\
  &= \int_{\Omega}\int_{\mathbb B_Y} \mathbf 1_{\frac {1}{m}\langle b,\tilde 
f^{\eps}_n\rangle b \in A-\tilde f^{\eps}_n}\ud\hat{\mathbb P}(b)\ud\mathbb P,
 \end{split}
 \end{equation}
where $F-y$ is a translation of a set $F\subset Y$ by a vector $y\in Y$. For 
each fixed $y\in Y\setminus \{0\}$ the distribution of a $Y$-valued random 
variable $b\mapsto \langle b,y\rangle b$ is absolutely continuous with respect 
to $\lambda_Y$. Since $\hat{\mathbb P}(A-y)=0$ for each $y\in Y\setminus \{0\}$, 
one has
\begin{equation}\label{eq:abscontmartwhy1}
 \int_{\mathbb B_Y} \mathbf 1_{\frac {1}{m}\langle b,y\rangle b \in 
A-y}\ud\hat{\mathbb P}(b) = 0.
\end{equation}
Recall that $\mathbb P\{\tilde f^{\eps}_n =0\} =0$, therefore due to 
\eqref{eq:abscontmartwhy1} a.s.
\[
 \int_{\mathbb B_Y} \mathbf 1_{\frac {1}{m}\langle b,\tilde f^{\eps}_n\rangle b 
\in A-\tilde f^{\eps}_n}\ud\hat{\mathbb P}(b)=0.
\]
Consequently the last double integral in \eqref{eq:abscontmartwhy0} vanishes. 
The same works for $g^m$.

Now to construct such a sequence for $((f_n, 0))_{n\geq 0}$ and $((g_n, 
0))_{n\geq 0}$ one needs to construct it for different $\eps$ and take an 
appropriate subsequence.
\end{proof}

\begin{proof}[Proof of Theorem \ref{thm:forseq}]
 The ``if'' part is obvious thanks to the definition of a UMD Banach space. Let 
us prove the ``only if'' part. As in the proof of the lemma above, 
without loss of generality suppose that $X$ is separable and that the set 
$\bigcup_n(\{f_n=0\}\cup\{g_n=0\})$ is of $\mathbb P$-measure $0$. If it does 
not hold, we consider $Y:= X\oplus \mathbb R$ instead of $X$ with the norm of 
$(x,r)\in Y$ given by $\|(x,r)\|_Y = (\|x\|_X^p+|r|^p)^{1/p}$. Notice that then 
$\beta_{p,Y} = \beta_{p,X}$. We can suppose that $a_0$ is nonzero a.s., so we 
consider $(f^{\eps}_n)_{n\geq 0}:= (f_n \oplus \eps)_{n\geq 0}$ and 
$(g^{\eps}_n)_{n\geq 0}:= (g_n \oplus \eps a_0)_{n\geq 0}$ with $\eps>0$, and 
let $\eps$ go to zero. 
 
 One can also restrict to a finite dimensional case. Indeed, since $X$ is a 
separable reflexive space, $X^*$ is separable as well. Let $(Y_m)_{m\geq 1}$ be 
an increasing sequence of finite-dimensional subspaces of $X^*$ such that 
$\overline{\bigcup_mY_m}=X^*$ and $\|\cdot\|_{Y_m} = \|\cdot\|_{X^*}$ for each 
$m\geq 1$. Then for each fixed $m\geq 1$ there exists a linear operator 
$P_m:X\to Y_m^*$ of norm $1$ defined as follows: $\langle P_mx, y\rangle = 
\langle x,y\rangle$ for each $x\in X, y\in Y_m$. Then since $Y_m$ is a closed 
subspace of $X^*$, \cite[Proposition 4.33]{HNVW1} yields $\beta_{p',Y_m}\leq 
\beta_{p',X^*}$, consequently again by \cite[Proposition 4.33]{HNVW1} 
$\beta_{p,Y_m^*}\leq \beta_{p,X^{**}} = \beta_{p,X}$. So if we prove the finite 
dimensional version, then
 $$
   \mathbb E \|P_mg_n\|^p\leq \beta^p_{p,X} \mathbb E\|P_mf_n\|^p,\;\;\; n\geq 
0,
 $$
 for each $m\geq 1$, and due to the fact that $\|P_mx\|_{Y_m^*} \nearrow 
\|x\|_{X}$ for each $x\in X$ as $m \to \infty$, we would obtain 
\eqref{eq:forpnorm} in the general case.
 
 Let $\beta$ be the UMD constant of $X$, and let $U,V:X\times X\to \mathbb R$ be 
as defined in Theorem \ref{thm:Burkholder} and in \eqref{eq:defofV} 
respectively, $(a_n)_{n\geq 0}$ be as defined in Proposition 
\ref{thm:discweakdiffsub}. By Lemma \ref{lemma:abscontmart} we can suppose that 
$f_n$ and $g_n$ have distributions which are absolutely continuous with respect 
to the Lebesgue measure. Then
 \begin{align}\label{eq:verybigthing}
  \mathbb E (\|g_n\|^p-\beta \|f_n\|^p)&\stackrel{(i)}{\leq} \mathbb E 
U(f_n,g_n) = \mathbb E U(f_{n-1}+df_n,g_{n-1}+a_ndf_n)\nonumber\\
  &\stackrel{(ii)}=\mathbb E V\bigl( 
{g_{n\!-\!1}\!+\!f_{n\!-\!1}}\!+\!{(a_n\!+\!1)df_n},{g_{n\!-\!1}\!-\!f_{n\!-\!1}
}\!+\!{(a_n\!-\!1)df_n}\bigr)\nonumber\\
  &\stackrel{(iii)}{\leq} \mathbb E V\bigl( 
{g_{n-1}+f_{n-1}},{g_{n-1}-f_{n-1}}\bigr)\nonumber\\
  &\quad+ \mathbb E \bigl\langle \partial_x V\bigl({g_{n-1}+f_{n-1}}, 
{g_{n-1}-f_{n-1}}\bigr),{(a_n+1)df_n}\bigr\rangle\\
  &\quad+\mathbb E \bigl\langle \partial_y 
V\bigl({g_{n-1}+f_{n-1}},{g_{n-1}-f_{n-1}}\bigr), 
{(a_n-1)df_n}\bigr\rangle\nonumber\\
  &\stackrel{(iv)}{=}\mathbb E 
V\bigl({g_{n-1}+f_{n-1}},{g_{n-1}-f_{n-1}}\bigr)\nonumber\\
  &\stackrel{(v)}=\mathbb E U(f_{n-1},g_{n-1}).\nonumber
 \end{align}
Here $(i)$ and $(iii)$ hold by Theorem \ref{thm:Burkholder} and 
\eqref{eq:dirderivativeV} respectively, $(ii)$ and $(v)$ follow from the 
definition of $V$. Let us prove $(iv)$. We will show that
\begin{equation}\label{eq:condexpextVU}
  \mathbb E \bigl\langle \partial_x 
V\bigl({g_{n-1}+f_{n-1}},{g_{n-1}-f_{n-1}}\bigr), {(a_n+1)df_n}\bigr\rangle = 0.
\end{equation}
Since both $f_n$ and $a_n f_n$ are martingale differences, ${(a_n+1)df_n}$ is a 
martingale difference as well. Therefore $\mathbb E\bigl({(a_n-1)df_n}|\mathcal 
F_{n-1}\bigr)=0$. Note that according to Lemma~\ref{lem:V_xV_yarenice}~a.s.
\[
 \|\partial_x 
V\bigl({g_{n\!-\!1}\!+\!f_{n\!-\!1}},{g_{n\!-\!1}\!-\!f_{n\!-\!1}}\bigr)\| 
\lesssim_{V} \|f_n\|^{p-1} + \|g_n\|^{p-1}.
\]
Therefore by the H\"older inequality $\bigl\langle \partial_x 
V\bigl({g_{n\!-\!1}\!+\!f_{n\!-\!1}},{g_{n\!-\!1}\!-\!f_{n\!-\!1}}\bigr), 
{(a_n\!+\!1)df_n}\bigr\rangle$ is integrable. Since $\partial_x 
V\bigl({g_{n\!-\!1}\!+\!f_{n\!-\!1}},{g_{n\!-\!1}\!-\!f_{n\!-\!1}}\bigr)$ is 
$\mathcal F_{n-1}$-measurable,
\begin{align*}
  &\mathbb E\Bigl(\bigl\langle \partial_x V\bigl({g_{n-1}+f_{n-1}}, 
{g_{n-1}-f_{n-1}}\bigr), {(a_n+1)df_n}\bigr\rangle \big|\mathcal 
F_{n-1}\Bigr)\\\
  &=\Bigl\langle  \partial_x V\bigl({g_{n-1}+f_{n-1}}, {g_{n-1}-f_{n-1}}\bigr), 
\mathbb E\bigl( {(a_n+1)df_n} \big|\mathcal F_{n-1}\bigr)\Bigr\rangle\\
  &=\bigl\langle  \partial_x V\bigl({g_{n-1}+f_{n-1}}, {g_{n-1}-f_{n-1}}\bigr), 
0\bigr\rangle = 0,
\end{align*}
so \eqref{eq:condexpextVU} holds. By the same reason
$$
\mathbb E \bigl\langle \partial_y 
V\bigl({g_{n-1}+f_{n-1}},{g_{n-1}-f_{n-1}}\bigr), {(a_n-1)df_n}\bigr\rangle=0,
$$
and $(iv)$ follows.

Notice that thanks to Remark \ref{rem:propsofU} $\mathbb E(f_0, g_0)\leq 0$. 
Therefore from the inequality \eqref{eq:verybigthing} by an induction argument 
we get
\begin{align*}
  \mathbb E (\|g_n\|^p-\beta^p \|f_n\|^p) &\leq \mathbb E U(f_{n},g_{n})
  \leq \mathbb E U(f_{n-1},g_{n-1})\leq \ldots\leq \mathbb E U(f_0,g_0)\leq 0.
\end{align*}
This terminates the proof.
\end{proof}

\subsection{Continuous time case}\label{subsec:contcase}
Now we turn to continuous time martingales. Let $(\Omega, \mathcal F, \mathbb 
P)$ be a probability space with a filtration $\mathbb F = (\mathcal F_t)_{t\geq 
0}$ that satisfies the usual conditions. 

\begin{defi}\label{def:pureludisc}
 Let $M:\mathbb R_+\times \Omega \to \mathbb R$ be a local martingale. Then $M$ 
is called {\em purely discontinuous} if $[M]$ is a pure jump processes (i.e.\ 
$[M]$ has a version that is a constant a.s.\ in time). Let $X$ be a Banach 
space, $M:\mathbb R_+\times \Omega \to X$ be a local martingale. Then $M$ is 
called {\em purely discontinuous} if for each $x^* \in X^*$ a local martingale 
$\langle M, x^*\rangle$ is purely discontinuous.
\end{defi}

The reader can find more on purely discontinuous martingales in \cite{JS,Kal}. 

\begin{defi}
 Let $M,N:\mathbb R_+ \times \Omega \to X$ be local martingales. Then we say 
that $N$ is {\em weakly differentially subordinated} to $M$ if for each $x^* \in 
X^*$ one has that $[\langle M,x^*\rangle]-[\langle N,x^*\rangle]$ is an a.s.\ 
nondecreasing function and $|\langle N_0,x^*\rangle|\leq |\langle 
M_0,x^*\rangle|$~a.s.
\end{defi}

 The following theorem is a natural extension of Proposition 
\ref{thm:discweakdiffsub}.

\begin{theorem}\label{thm:weakdiffsubproc}
Let $X$ be a Banach space. Then $X$ is a UMD space if and only if for some (equivalently, for all) $1<p<\infty$ there exists 
$\beta >0$ such that for each purely discontinuous $X$-valued local martingales 
$M,N:\mathbb R_+\times \Omega \to X$ such that $N$ is weakly differentially 
subordinated to $M$ one has
  \begin{equation}\label{eq:thmforpurejump}
  \mathbb E \|N_t\|^p\leq \beta^p\mathbb E \|M_t\|^p.
 \end{equation}
 If this is the case then the smallest admissible $\beta$ equals the UMD 
constant $\beta_{p,X}$.
\end{theorem}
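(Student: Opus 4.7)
The plan is to reduce both directions of the theorem to the discrete result, Theorem \ref{thm:forseq}.

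\emph{``If''.} Given a discrete pair $(f_n)_{n\geq 0}, (g_n)_{n\geq 0}$ with $g$ weakly differentially subordinate to $f$, I embed them into continuous time by $M_t := f_{\lfloor t\rfloor}$ and $N_t := g_{\lfloor t\rfloor}$, with filtration $\mathcal G_t := \mathcal F_{\lfloor t\rfloor}$ (usually augmented). As piecewise constant c\`adl\`ag processes, $M$ and $N$ are purely discontinuous; their jumps at integer times are exactly $df_n$ and $dg_n$, so weak differential subordination transfers. Evaluating \eqref{eq:thmforpurejump} at $t=n$ recovers \eqref{eq:forpnorm}, and Theorem \ref{thm:forseq} then forces $X$ to be UMD with $\beta_{p,X}\leq\beta$.

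\emph{``Only if''.} Assume $X$ is UMD and $\mathbb E\|M_t\|^p<\infty$. Since $[\langle M,x^*\rangle]-[\langle N,x^*\rangle]$ is nondecreasing and both processes are pure-jump, the hypothesis reduces for each $x^*\in X^*$ to $|\Delta\langle N_s,x^*\rangle|\leq|\Delta\langle M_s,x^*\rangle|$ for all $s$ a.s. Applying Lemma \ref{lemma:ajustformeasfunc} at each jump, and using that the optional jump set of $M$ is covered by countably many stopping times, I extract a predictable scalar $(a_s)$ with $|a_s|\leq 1$ such that $\Delta N_s = a_s\,\Delta M_s$ for all $s>0$ a.s.\ and $N_0 = a_0 M_0$ a.s. Because $M$ is purely discontinuous it is determined by its jumps and their predictable compensator, so $N-N_0$ can be represented as the stochastic jump-integral $a\cdot (M-M_0)$ against the compensated jump measure $\tilde\mu_M$ of $M$, giving a ``common integrand'' representation for both martingales.

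The main reduction is to martingales with finitely many jumps on $[0,t]$. For $\varepsilon>0$, let $M^\varepsilon$ collect only the jumps of $M$ of norm in $[\varepsilon,1/\varepsilon]$ together with their predictable compensator, and set $N^\varepsilon := a\cdot M^\varepsilon$. Then $N^\varepsilon$ remains weakly differentially subordinate to $M^\varepsilon$; both have at most finitely many jumps in $[0,t]$ a.s.; and $M^\varepsilon\to M$, $N^\varepsilon\to N$ in $L^p(\Omega;X)$ as $\varepsilon\downarrow 0$ by the stochastic-integral bound underlying Lemma \ref{lemma:stochintmoment}. Enumerating the jumps of $M^\varepsilon$ in $[0,t]$ as stopping times $\tau_0=0<\tau_1<\cdots<\tau_K\leq\tau_{K+1}=t$ and setting $\tilde f_k := M^\varepsilon_{\tau_k}$, $\tilde g_k := N^\varepsilon_{\tau_k}$, the processes $(\tilde f_k)$ and $(\tilde g_k)$ are discrete martingales for $(\mathcal F_{\tau_k})$; the common-integrand representation yields $\tilde g_k - \tilde g_{k-1} = a_{\tau_k}(\tilde f_k - \tilde f_{k-1})$, which is the discrete weak differential subordination. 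Theorem \ref{thm:forseq} gives $\mathbb E\|N^\varepsilon_t\|^p\leq\beta_{p,X}^p\,\mathbb E\|M^\varepsilon_t\|^p$, and passing $\varepsilon\downarrow 0$ concludes the proof.

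\emph{Main obstacle.} The delicate step is lifting the jumpwise bound $|\Delta\langle N_s,x^*\rangle|\leq|\Delta\langle M_s,x^*\rangle|$ to the sampled-increment identity $\tilde g_k-\tilde g_{k-1}=a_{\tau_k}(\tilde f_k-\tilde f_{k-1})$: one must produce a \emph{predictable} scalar $a$ (not just a jumpwise one) and verify that the compensator between jumps carries the same multiplier, i.e., that $N^\varepsilon$ genuinely equals $a\cdot M^\varepsilon$ and does not merely match it on jumps. Treating both the quasi-left-continuous and accessible-jump components, along with justifying the $L^p$-convergence of the $\varepsilon$-truncation via Lemma \ref{lemma:stochintmoment}, is where the technical work concentrates.
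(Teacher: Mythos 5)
Your ``if'' direction is fine: the piecewise-constant embedding $M_t=f_{\lfloor t\rfloor}$ is purely discontinuous and transfers weak differential subordination, so Theorem \ref{thm:forseq} applies. The gap is in the ``only if'' direction, at the discretization step. Even if you grant the representation $N^\varepsilon=a\cdot M^\varepsilon$ with a predictable $|a|\leq 1$ (itself a delicate point you only flag), the sampled increment is $\tilde g_k-\tilde g_{k-1}=\int_{(\tau_{k-1},\tau_k]}a\,\mathrm{d}M^\varepsilon$, which is \emph{not} of the form $a_{\tau_k}(\tilde f_k-\tilde f_{k-1})$ unless $a$ is constant on $(\tau_{k-1},\tau_k]$: the increment of $M^\varepsilon$ over that interval contains the predictable compensator drift accumulated between jumps, and that drift is integrated against the varying values of $a$, not multiplied by the single scalar $a_{\tau_k}$. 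Equivalently, continuous-time weak differential subordination controls increments of the quadratic variations $[\langle M,x^*\rangle]_{\tau_k}-[\langle M,x^*\rangle]_{\tau_{k-1}}$, and this does not imply the pointwise increment bound $|\langle\tilde g_k-\tilde g_{k-1},x^*\rangle|\leq|\langle\tilde f_k-\tilde f_{k-1},x^*\rangle|$ that Theorem \ref{thm:forseq} requires. Refining the partition does not obviously repair this, because on jump-free subintervals the increments of $M^\varepsilon$ and $N^\varepsilon$ are two compensator drifts that need not be related by any scalar of modulus at most one. So the reduction to the discrete theorem does not go through as stated.

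The paper avoids this entirely by staying in continuous time: after reducing to finite-dimensional $X$ via Lemma \ref{lemma:trulyabscontmart}, it applies the It\^o formula to the zigzag-concave Burkholder function, i.e.\ to $U(M_t,N_t)$. Because $[\langle M,x^*\rangle]$ and $[\langle N,x^*\rangle]$ are pure jump, the second-order (continuous) It\^o terms vanish; the first-order stochastic integrals are martingales with zero expectation by Lemmas \ref{lemma:stochintmoment} and \ref{lem:V_xV_yarenice}; and each jump term of the remaining sum is nonpositive by zigzag-concavity combined with the purely pointwise relation $\Delta N_s=a_s\Delta M_s$ supplied by Lemma \ref{lemma:ajustformeasfunc} --- no predictability of $a$, no representation of $N$ against the compensated jump measure of $M$, and no $\varepsilon$-truncation are needed. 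If you want to keep your route, you would have to replace jump-time sampling by an argument that also controls the drift between jumps, which essentially forces you back to an It\^o-type expansion.
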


\begin{lemma}\label{lemma:trulyabscontmart}
 Let $X$ be a finite dimensional Banach space, $1<p<\infty$, $M, N:\mathbb R_+ 
\times \Omega \to X$ be local martingales on a probability space $(\Omega, 
\mathcal F, \mathbb P)$ with a filtration $\mathbb F = (\mathcal F_t)_{t\geq 0}$ 
such that $N$ is weakly differentially subordinated to $M$. Let $Y=X\oplus 
\mathbb R$ be a Banach space such that $\|(x,r)\|_Y= (\|x\|_X^p + |r|^p)^{\frac 
1p}$ for each $x\in X$, $r\in \mathbb R$. Then there exist two sequences 
$(M^m)_{m\geq 1}$ and $(N^m)_{m\geq 1}$ of $Y$-valued martingales on an enlarged 
probability space $(\overline{\Omega}, \overline{\mathcal F}, \overline{\mathbb 
P})$ with an enlarged filtration $\overline{\mathbb F} = (\overline{\mathcal 
F}_t)_{t\geq 0}$ such that
 \begin{enumerate}
  \item $M^m_t$, $N^m_t$ have absolutely continuous distributions with respect 
to the Lebesgue measure on $Y$ for each $m\geq 1$ and $t\geq 0$;
  \item $M^m_t \to (M_t,0)$, $N^m_t \to (N_t,0)$ pointwise as $m\to \infty$ for 
each $t\geq 0$;
  \item if for some $t\geq 0$ $\mathbb E\|M_t\|^p<\infty$, then for each $m\geq 
1$ one has that $\mathbb E\|M^m_t\|^p<\infty$ and $\mathbb 
E\|M^m_t-(M_t,0)\|^p\to 0$ as $m\to \infty$;
  \item if for some $t\geq 0$ $\mathbb E\|N_t\|^p<\infty$, then for each $m\geq 
1$ one has that $\mathbb E\|N^m_t\|^p\!~\!<\!~\!\infty$ and $\mathbb 
E\|N^m_t-(N_t,0)\|^p\to 0$ as $m\to \infty$;
  \item for each $m\geq 1$ we have that $N^m$ is weakly differentially 
subordinated to~$M^m$.
 \end{enumerate}
\end{lemma}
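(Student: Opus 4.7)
The plan is to mirror the discrete-time construction of Lemma~\ref{lemma:abscontmart} step by step, replacing the martingale-difference formalism by the quadratic-variation formulation of weak differential subordination.

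First, I would reduce to the case $M_0, N_0 \neq 0$ a.s.\ by the same initial-value perturbation used at the start of the proof of Lemma~\ref{lemma:abscontmart}: set $M_0^{\eps} := M_0 + \eps x \mathbf{1}_{M_0 = 0}$ and $N_0^{\eps} := N_0 + \eps x \mathbf{1}_{M_0 = 0} + \eps M_0 \mathbf{1}_{N_0 = 0,\, M_0 \neq 0}$ for a fixed $x\in X$, and let $M^{\eps}, N^{\eps}$ evolve with the original increments from these modified starting points. The perturbation affects only the initial value, so weak differential subordination is preserved (constants have zero quadratic variation), and $M^{\eps}\to M, N^{\eps}\to N$ in the $L^p$ sense of (3)--(4) as $\eps\to 0$. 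Applying Lemma~\ref{lemma:ajustformeasfunc} to $M_0^{\eps}, N_0^{\eps}$ yields an $\mathcal F_0$-measurable scalar $a_0$ with $|a_0|\leq 1$ and $N_0 = a_0 M_0$, and after the perturbation $a_0$ is a.s.\ nonzero.

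Second, I would lift to $Y$ by setting $\tilde M^{\eps}_t := (M_t,\eps)$ and $\tilde N^{\eps}_t := (N_t,\eps a_0)$. Both are $Y$-valued local martingales that are a.s.\ nonzero at every $t$, since their second coordinates are a.s.\ nonzero constants in time. Weak differential subordination passes through: for any $y^* = (x^*, r)\in Y^*$ the added constant-in-time coordinate does not contribute to the quadratic variation, so $[\langle \tilde M^{\eps}, y^*\rangle] = [\langle M, x^*\rangle]$ and $[\langle \tilde N^{\eps}, y^*\rangle] = [\langle N, x^*\rangle]$, while the identity $N_0 = a_0 M_0$ gives $|\langle \tilde N^{\eps}_0, y^*\rangle| = |a_0|\,|\langle \tilde M^{\eps}_0, y^*\rangle| \leq |\langle \tilde M^{\eps}_0, y^*\rangle|$.

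Third, I would enlarge the probability space exactly as in Lemma~\ref{lemma:abscontmart}: take $(\mathbb B_Y, \mathcal B(\mathbb B_Y), \hat{\mathbb P})$ with $\hat{\mathbb P} = \lambda_Y|_{\mathbb B_Y}$, fix a scalar product on $Y$, form $(\overline\Omega, \overline{\mathcal F}, \overline{\mathbb P}) = (\Omega\times \mathbb B_Y, \mathcal F\otimes \mathcal B(\mathbb B_Y), \mathbb P\otimes \hat{\mathbb P})$ with filtration $\overline{\mathcal F}_t = \mathcal F_t \otimes \mathcal B(\mathbb B_Y)$, define $Q_m := I + \tfrac{1}{m}T$ with $T(b,y) = \langle b, y\rangle b$, and set $M^m := Q_m \tilde M^{\eps}$, $N^m := Q_m \tilde N^{\eps}$. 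These are $\overline{\mathbb F}$-martingales because $Q_m$ is $\mathcal B(\mathbb B_Y)$-measurable and hence independent of $\mathbb F$. Properties (1)--(4) are verified exactly as in the discrete case: (1) follows from the absolute continuity of $b\mapsto \langle b, y\rangle b$ on $\mathbb B_Y$ for any $y\neq 0$, applied to $y = \tilde M^{\eps}_t$ and $y = \tilde N^{\eps}_t$, which are nonzero a.s.; (2)--(4) follow from the uniform bound $\sup_b \|Q_m - I\|_{\mathcal L(Y)}\leq C/m\to 0$. A diagonal subsequence over $\eps\to 0$ then yields the convergence to $(M_t, 0)$ and $(N_t, 0)$ demanded by the statement.

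The main obstacle is property (5): since $Q_m$ is random one cannot directly write $\langle Q_m \tilde M^{\eps}, y^*\rangle = \langle \tilde M^{\eps}, Q_m^* y^*\rangle$ as a relation between processes and appeal to the assumption. The remedy is to condition on $b$. For each fixed $b\in \mathbb B_Y$, $Q_m(b)$ is a deterministic linear operator on $Y$, so pathwise in $b$ the process $\langle M^m, y^*\rangle$ agrees with $\langle \tilde M^{\eps}, Q_m(b)^* y^*\rangle$, and because the quadratic variation is defined as a limit in probability of sums of squared increments, independence of $\mathcal B(\mathbb B_Y)$ from $\mathbb F$ gives $[\langle M^m, y^*\rangle]_t(\cdot, b) = [\langle \tilde M^{\eps}, Q_m(b)^* y^*\rangle]_t(\cdot)$ for $\hat{\mathbb P}$-a.e.\ $b$, and similarly for $N^m$. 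Weak differential subordination of $\tilde N^{\eps}$ to $\tilde M^{\eps}$ applied to the fixed linear functional $Q_m(b)^* y^*$ then gives the required monotonicity of $[\langle M^m, y^*\rangle] - [\langle N^m, y^*\rangle]$ on $\overline\Omega$, and the corresponding bound on initial values is immediate from the discussion in the second paragraph.
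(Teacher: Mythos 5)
Your proposal is correct and follows essentially the same route as the paper, which simply states that the proof is the same as that of Lemma~\ref{lemma:abscontmart}. You carry out the adaptation faithfully, and in particular your treatment of property (5) --- conditioning on $b$ so that $Q_m(b)^*y^*$ becomes a fixed functional in $Y^*$ to which the quadratic-variation form of weak differential subordination applies, combined with Fubini --- correctly fills in the one detail that is genuinely new in the continuous-time setting and that the paper leaves implicit.
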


\begin{proof}
 The proof in essentially the same as one of Lemma \ref{lemma:abscontmart}.
\end{proof}

\begin{proof}[Proof of Theorem \ref{thm:weakdiffsubproc}]
We use a modification of the argument in \cite[Theorem 1]{Wang}, where the 
Hilbert space case was considered. Thanks to the same methods as were applied in 
the beginning of the proof of Theorem \ref{thm:forseq} and using Lemma 
\ref{lemma:trulyabscontmart} instead of Lemma \ref{lemma:abscontmart}, one can 
suppose that $X$ is finite-dimensional and $M_t$ and $N_t$ are nonzero a.s.\ for 
each $t\geq 0$. We know that $\mathbb E U(M_t,N_t)\geq \mathbb 
E(\|N_t\|^p-\beta^p \|M_t\|^p)$ for each $t\geq 0$. On the other hand, thanks to 
the fact that $[\langle M, x^*\rangle]$ and $[\langle N, x^*\rangle]$ are pure 
jump for each $x^* \in X^*$ and the finite-dimensional version of It\^o formula 
\cite[Theorem 26.7]{Kal}, one has
 \begin{equation}\label{eq:EU(M,N)}
  \begin{split}
     \mathbb E U(M_t,N_t) = \mathbb E U(M_0,N_0) &+ \mathbb E \int_{0}^t \langle 
\partial_x U(M_{s-},N_{s-}),\ud M_s\rangle\\
  &+\mathbb E \int_{0}^t \langle \partial_y U(M_{s-},N_{s-}),\ud N_s\rangle + 
\mathbb E I,
  \end{split}
 \end{equation}
where
$$
I = \sum_{0<s\leq t}[\Delta U(M_s,N_s)-\langle \partial_x 
U(M_{s-},N_{s-}),\Delta M_s\rangle - \langle \partial_y U(M_{s-},N_{s-}),\Delta 
N_s\rangle].
$$
Note that since a.s.
$$
\Delta |\langle N,x^*\rangle|^2= \Delta [\langle N,x^*\rangle]\leq \Delta 
[\langle M,x^*\rangle]=\Delta |\langle M,x^*\rangle|^2
$$ 
for each $x^* \in X^*$, one has that thanks to Lemma 
\ref{lemma:ajustformeasfunc} for each $s\geq 0$, for a.e.\ $\omega \in \Omega$ 
there exists $a_s(\omega)$ such that $|a_s(\omega)|\leq 1$ and $\Delta 
N_s(\omega) = a_s(\omega)\Delta M_s(\omega)$. Therefore for each $s\geq 0$ by 
\eqref{eq:dirderivativeV} $\mathbb P$-a.s.
\begin{align*}
 &\Delta U(M_s,N_s)-\langle \partial_x U(M_{s-},N_{s-}),\Delta M_s\rangle - 
\langle \partial_y U(M_{s-},N_{s-}),\Delta N_s\rangle\\
 &= V(M_{s-}+N_{s-}+(a_s+1)\Delta M_s,N_{s-}-M_{s-} + (a_s-1)\Delta M_s)\\ 
 &\quad- V(M_{s-}+N_{s-},N_{s-}-M_{s-})\\ 
 &\quad-\langle \partial_x V(M_{s-}+N_{s-},N_{s-}-M_{s-}),(a_s+1)\Delta 
M_s\rangle\\
 &\quad- \langle \partial_y V(M_{s-}+N_{s-},N_{s-}-M_{s-}),(a_s-1)\Delta 
M_s\rangle \leq 0,
\end{align*}
so $I\leq 0$ a.s., and $\mathbb E I \leq 0$. Also 
\begin{align*}
 \int_{0}^t \langle \partial_x U(M_{s-},N_{s-}),\ud M_s\rangle &+ \int_{0}^t 
\langle \partial_y U(M_{s-},N_{s-}),\ud N_s\rangle\\
 &=\int_{0}^t \langle \partial_x V(M_{s-}+N_{s-},N_{s-}-M_{s-}),\ud 
(M_s+N_s)\rangle \\
 &+ \int_{0}^t \langle  \partial_y V(M_{s-}+N_{s-},N_{s-}-M_{s-}),\ud 
(N_s-M_s)\rangle,
\end{align*}
so by Lemma \ref{lemma:stochintmoment} and Lemma \ref{lem:V_xV_yarenice} 
it is a martingale that starts at zero, and therefore its expectation is zero as 
well. Consequently, thanks to \eqref{eq:ineqonU}, \eqref{eq:EU(M,N)} and Remark 
\ref{rem:propsofU}, 
$$
\mathbb E \|N_t\|^p - \beta_{p,X}^p\mathbb E \|M_t\|^p \leq \mathbb E U(M_t,N_t) 
\leq \mathbb E U(M_0,N_0) \leq 0,
$$
and therefore \eqref{eq:thmforpurejump} holds.
\end{proof}

As one can see, in our proof we did not need the second order terms of the It\^o 
formula thanks to the nature of the quadratic variation of a purely 
discontinuous process. Nevertheless, Theorem \ref{thm:weakdiffsubproc} holds for 
arbitrary martingales $M$ and $N$, but with worse estimates (see 
\cite{Y17MartDec}). The connection of Theorem \ref{thm:weakdiffsubproc} for 
continuous martingales with the Hilbert transform will be discussed in Section 
\ref{sec:Hilbtrans}.

\section{Fourier multipliers}

In \cite{BB} and \cite{BBB} the authors exploited the differential subordination 
property to show boundedness of certain Fourier multipliers in $\mathcal 
L(L^p(\mathbb R^d))$. It turned out that it is sufficient to use the weak 
differential subordination property to obtain the same assertions, but 
in the vector-valued situation.

\subsection{Basic definitions and the main theorem}
Let $d\geq 1$ be a natural number. Recall that $\mathcal S(\mathbb R^d)$ is a 
space of Schwartz functions on $\mathbb R^d$. For a Banach space $X$ with a 
scalar field $\mathbb C$ we define $\mathcal S(\mathbb R^d)\otimes X$ as the space 
of all functions $f:\mathbb R^d \to X$ of the form $f=\sum_{k=1}^K f_k \otimes 
x_k$, where $K\geq 1$, $f_1,\ldots,f_K \in \mathcal S(\mathbb R^d)$, and 
$x_1,\ldots ,x_K \in X$. Notice that for each $1\leq p<\infty$ the space 
$\mathcal S(\mathbb R^d)\otimes X$ is dense in $L^p(\mathbb R^d; X)$.

We define the {\it Fourier transform} $\mathcal F$ and the {\it inverse Fourier 
transform} $\mathcal F^{-1}$ on $\mathcal S(\mathbb R^d)$ as follows:
\begin{align*}
 \mathcal F (f)(t) &= \frac{1}{(2\pi)^{\frac d2}}\int_{\mathbb R^d} e^{-i\langle 
t,u\rangle}f(u)\ud u,\;\;\; f\in \mathcal S(\mathbb R^d), t\in \mathbb R^d,\\
  \mathcal F^{-1} (f)(t) &= \frac{1}{(2\pi)^{\frac d2}}\int_{\mathbb R^d} 
e^{i\langle t,u\rangle}f(u)\ud u,\;\;\; f\in \mathcal S(\mathbb R^d), t\in 
\mathbb R^d.
\end{align*}
It is well-known that for any $f\in \mathcal S(\mathbb R^d)$ we have $\mathcal 
F(f), \mathcal F^{-1}(f) \in \mathcal S(\mathbb R^d)$, and $\mathcal 
F^{-1}(\mathcal F(f)) = f$. The reader can find more details on the Fourier 
transform in \cite{Graf}.

Let $m:\mathbb R^d\to \mathbb C$ be measurable and bounded. Then we can 
define a linear operator $T_m$ on $\mathcal S(\mathbb R^d)\otimes X$ as follows:
\begin{equation}\label{eq:multSotimesX}
  T_m (f\otimes x) = \mathcal F^{-1}(m\mathcal F(f)) \cdot x,\;\;\;\; f\in 
\mathcal S(\mathbb R^d), x\in X.
\end{equation}
 The operator $T_m$ is called a {\it Fourier multiplier}, while the function $m$ 
is called the {\it symbol} of $T_m$. If $X$ is finite-dimensional then $T_m$ can 
be extended to a bounded linear operator on $L^2(\mathbb R^d; X)$. The question 
is usually whether one can extend $T_m$ to a bounded operator on $L^p(\mathbb 
R^d;X)$ for a general $1<p<\infty$ and a given $X$. Here the answer will be 
given for $m$ of quite a special form and $X$ with the UMD property.

Let $V$ be a L\'evy measure on $\mathbb R^d$, that is $V(\{0\})=0$, $V \neq 0$ 
and
$$
\int_{\mathbb R^d}(|x|^2\wedge 1) V(\dd x) <\infty.
$$
Let $\phi \in L^{\infty}( \mathbb R^d ; \mathbb C)$ be such that 
$\|\phi\|_{L^{\infty}(\mathbb R^d;\mathbb C)} \leq 1$. Also let $\mu\geq 0$ be a 
finite Borel measure on the unit sphere $S^{d-1} \subset \mathbb R^d$, and 
$\psi\in L^{\infty}( S^{d-1}; \mathbb C)$ satisfies
$\|\psi\|_{L^{\infty}(S^{d-1};\mathbb C)}\!~\!\leq\!~\!1$.

In the sequel we set $\frac {a}{0} = 0$ for each $a \in \mathbb C$. The 
following result extends \cite[Theorem 1.1]{BBB} to the UMD Banach space 
setting.

\begin{theorem}\label{thm:mainmult}
 Let $X$ be a UMD Banach space. Then the Fourier multiplier $T_m$ with a symbol
 \begin{equation}\label{eq:defofm(xi)}
  m(\xi) = \frac{\int_{\mathbb R^d} (1-\cos \xi \cdot z)\phi(z)V(\dd z) + 
\frac 12\int_{S^{d-1}}(\xi \cdot \theta)^{2}\psi(\theta)\mu(\dd\theta)}{\int_{\mathbb 
R^d} (1-\cos \xi \cdot z)V(\dd z)+ \frac 12\int_{S^{d-1}}(\xi \cdot 
\theta)^{2}\mu(\dd\theta)},\;\; \xi \in \mathbb R^d,
 \end{equation}
 has a bounded extension on $L^p(\mathbb R^d; X)$ for $1<p<\infty$. Moreover, 
then for each $f\in L^p(\mathbb R^d; X)$
 \begin{equation}\label{eq:multthm}
   \|T_m f\|_{p}\leq \beta_{p,X}\|f\|_p.
 \end{equation}
\end{theorem}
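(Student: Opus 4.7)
The plan is to realize $T_m$ as a conditional expectation of a martingale transform obtained from a suitable Lévy process, and then to apply Theorem \ref{thm:weakdiffsubproc}. The denominator of $m(\xi)$ is precisely (minus) the characteristic exponent of a Lévy process whose Lévy measure is $V$ together with a Gaussian component determined by $\mu$. First I would treat the pure-jump situation, i.e.\ the case $\mu=0$, following the scheme of Ba\~{n}uelos--Bielaszewski--Bogdan \cite{BBB}: fix $T>0$, let $(Z_t)_{t\ge0}$ be a Lévy process in $\mathbb R^d$ with Lévy measure $V$ and zero Gaussian part, and for $f\in \mathcal S(\mathbb R^d)\otimes X$ and $x\in\mathbb R^d$ define the parabolic martingale
\[
 M^{x,T}_t := P_{T-t} f(x+Z_t), \qquad 0\le t\le T,
\]
where $(P_s)$ is the transition semigroup of $Z$. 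Since $Z$ has no Gaussian part, $M^{x,T}$ is purely discontinuous; its jumps are $\Delta M^{x,T}_t = P_{T-t}f(x+Z_{t-}+\Delta Z_t)-P_{T-t}f(x+Z_{t-})$. Using the compensated Poisson random measure of $Z$, define a transformed martingale
\[
 N^{x,T}_t := \int_0^t\!\!\int_{\mathbb R^d}\phi(z)\bigl[P_{T-s}f(x+Z_{s-}+z)-P_{T-s}f(x+Z_{s-})\bigr]\,\wt N(\dd s,\dd z).
\]
Since $|\phi|\le 1$, applying any $x^*\in X^*$ gives $[\langle N,x^*\rangle]\le [\langle M,x^*\rangle]$, so $N^{x,T}$ is weakly differentially subordinated to $M^{x,T}$.

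Next I would identify $\E\,\mathbb E\, N^{x,T}_T$ with $T_m f$. A direct computation with the Lévy--Khintchine formula shows that $\E f(x+Z_T) = (\mathcal F^{-1}(e^{-T\Psi}\mathcal F f))(x)$ where $\Psi$ is the denominator of \eqref{eq:defofm(xi)} (with $\mu=0$), and similarly the expected value of $N^{x,T}_T$ corresponds to the symbol with the numerator of \eqref{eq:defofm(xi)} in place of $\Psi$. Taking $T\to\infty$, the factor $e^{-T\Psi(\xi)}$ kills the low-frequency contribution, and for Schwartz $f$ one obtains, after dividing formally by $\Psi$ (Plancherel/Fubini), the representation
\[
 T_m f(x) = \lim_{T\to\infty} \mathbb E\, N^{x,T}_T,
\]
which, combined with Theorem \ref{thm:weakdiffsubproc}, Jensen's inequality, and Fubini, yields $\|T_m f\|_p\le \beta_{p,X}\|f\|_p$ when $\mu=0$.

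To incorporate the Gaussian component $\tfrac12\int_{S^{d-1}}(\xi\cdot\theta)^2\psi(\theta)\mu(\dd\theta)$, I would approximate it from within the purely discontinuous category. For each $n\ge1$ and each $\theta\in S^{d-1}$ replace the Brownian contribution in direction $\theta$ by a symmetric compound Poisson process with jumps $\pm n^{-1}\theta$ at rate $\tfrac12 n^{2}$; equivalently, enlarge the Lévy measure $V$ to $V_n := V + \nu_n$, where
\[
 \nu_n(\dd z) := \tfrac12 n^{2}\int_{S^{d-1}}\bigl(\delta_{n^{-1}\theta}+\delta_{-n^{-1}\theta}\bigr)(\dd z)\,\mu(\dd\theta),
\]
and extend $\phi$ to $V_n$-a.e.\ function $\phi_n$ equal to $\phi$ on $\supp V$ and to $\psi(\theta)$ on the support of $\nu_n$ corresponding to direction $\theta$. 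Clearly $|\phi_n|\le 1$, and a short expansion using $1-\cos(n^{-1}\xi\cdot\theta) = \tfrac{1}{2n^{2}}(\xi\cdot\theta)^{2}+O(n^{-4})$ shows that the associated symbols $m_n$ converge pointwise to $m$ while remaining uniformly bounded by $1$. Applying the previous step to each $m_n$ gives $\|T_{m_n}f\|_p\le \beta_{p,X}\|f\|_p$, and a dominated convergence argument in frequency (valid since $f\in\mathcal S(\mathbb R^d)\otimes X$) yields $T_{m_n}f\to T_m f$ in $L^p(\mathbb R^d;X)$, giving \eqref{eq:multthm} on the dense subspace $\mathcal S(\mathbb R^d)\otimes X$ and thus on all of $L^p(\mathbb R^d;X)$.

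The main obstacle is the rigorous identification of $\E N^{x,T}_T$ with $T_m f$ together with the justification of the $T\to\infty$ limit: one needs $e^{-T\Psi}\to 0$ on $\{\Psi\ne 0\}$ fast enough to drop the exponential factor after dividing by $\Psi$, and one must argue that the division by the characteristic exponent, which appears naturally when one integrates the parabolic martingale in time, indeed reproduces exactly the symbol \eqref{eq:defofm(xi)}. Once this Fourier-analytic bookkeeping is carried out, the UMD input enters only through Theorem \ref{thm:weakdiffsubproc}, so the constant in \eqref{eq:multthm} is automatically $\beta_{p,X}$.
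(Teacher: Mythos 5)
Your proposal follows essentially the same route as the paper, which itself is a vector-valued adaptation of Ba\~nuelos--Bogdan and Ba\~nuelos--Bielaszewski--Bogdan: build the parabolic martingale $G$ from a L\'evy process, transform its jumps by $\phi$ to get a martingale $F$ that is weakly differentially subordinated to $G$, invoke Theorem \ref{thm:weakdiffsubproc} to get $\mathbb E\|F\|^p\le\beta_{p,X}^p\,\mathbb E\|G\|^p$, project back to a function of the space variable, and recover the symbol in the limit of long time horizon. Your treatment of the Gaussian part by enlarging the L\'evy measure with small symmetric jumps $\pm n^{-1}\theta$ at rate $\tfrac12 n^2$ is exactly the reduction the paper performs by citing \cite{BBB}, and the computation $n^2(1-\cos(n^{-1}\xi\cdot\theta))\to\tfrac12(\xi\cdot\theta)^2$ together with Fatou's lemma closes that step correctly. (Two cosmetic differences: the paper works with finite symmetric measures $\nu$, i.e.\ compound Poisson approximations, rather than the full L\'evy measure $V$ directly, and it first symmetrizes $V$; neither affects the substance.)

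The one step that would fail as literally written is the identification $T_mf(x)=\lim_{T\to\infty}\mathbb E\,N^{x,T}_T$. Your $N^{x,T}$ is a stochastic integral against a compensated Poisson random measure, hence a martingale started at $0$, so $\mathbb E\,N^{x,T}_T=0$ for every $T$; a plain expectation cannot produce $T_mf$. What is needed -- and what you announce in your opening sentence but do not carry out -- is the \emph{conditional} expectation of $N^{x,T}_T$ given the terminal position $x+Z_T$, averaged over $x$; equivalently, the duality argument of the paper: test $F_u(x;s,u;f,\phi)$ against $g(x+X_{s,u})$ for $g\in L^{p'}(\mathbb R^d;X^*)$, integrate over $x$, and use H\"older together with \eqref{eq:fourcorfrommainthm} to produce $h\in L^p(\mathbb R^d;X)$ with $\|h\|_p\le\beta_{p,X}\|f\|_p$ and $\mathcal F h=m_s\mathcal F f$, where $m_s$ carries the extra factor $1-e^{2|s|\Psi(\xi)}$ that disappears as $s\to-\infty$ on $\{\Psi\neq0\}$. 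It is this projection (contractive by Jensen and translation invariance of Lebesgue measure), not the expectation, that transfers the martingale inequality to the multiplier bound; once you replace the displayed formula by the conditional-expectation/duality version, the rest of your argument goes through and matches the paper's proof.
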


\begin{remark}
 The coefficient $\frac 12$ in both numerator and denominator of \eqref{thm:mainmult}, even though it looks wired and useless (because one can always transform $\mu$ to $2\mu$), exists because of the strong connection with the L\'evy--Khintchin representation of L\'evy processes (see e.g.\ \cite[Part~4.1]{Ban10}).
\end{remark}

The proof is a modification of the arguments given in \cite{BBB} and \cite{BB}, 
but instead of real-valued process we will work with processes that take their 
values in a finite dimensional space. For the convenience of the reader the 
proof will be given in the same form and with the same notations as the original 
one. However, we will need to justify here some steps, so we cannot just skip 
the proof. First of all as that was done in \cite{BBB}, we reduce to the case of 
symmetric $V$ and $\mu = 0$, and proceed as in the proof of \cite[Theorem 
1]{BB}.

In the rest of the section we may assume that $X$ is finite dimensional, since 
it is sufficient to show \eqref{eq:multthm} for all $f$ with values in $X_0$ for 
each finite dimensional subspace $X_0$ of $X$.

Let $\nu$ be a positive finite symmetric measure on $\mathbb R^d$, 
$\widetilde{\nu} = \nu/|\nu|$. Let $T_i$ and $Z_i$, $i = \pm 1, \pm 2,\pm 3, 
\ldots$, be a family of independent random variables, such that each $T_i$ is 
exponentially distributed with parameter $|\nu|$ (i.e.\ $\mathbb E T_i = 
1/|\nu|$), and each $Z_i$ has $\widetilde{\nu}$ as a distribution. Let $S_i = 
T_1+\cdots +T_i$ for a positive $i$ and $S_i = -(T_{-1}+\cdots +T_i)$ for a 
negative $i$. For each $-\infty < s<t<\infty$ we define $X_{s,t}:= 
\sum_{s<S_i\leq t}Z_i$ and $X_{s,t-}:= \sum_{s<S_i< t}Z_i$. Note that $\mathcal 
N(B) = \#\{i:(S_i, Z_i)\in B\}$ defines a Poisson measure on $\mathbb R\times 
\mathbb R^d$ with the intensity measure $\lambda \otimes \nu$, and $X_{s,t} = 
\int_{s<v\leq t} x \mathcal N(\dd v, \ud \nu)$ (see e.g.\ \cite{Sato}). Let 
$N(s,t) = \mathcal N((s,t] \times \mathbb R^d)$ be the number of {\it signals} 
$S_i$ such that $s<S_i \leq t$. The following Lemmas 
\ref{lem:lemma1inBB}-\ref{lem:lemma5inBB} are multidimensional 
versions of \cite[Lemma 1--5]{BB}, which can be proven in the same way as in the 
scalar case.

\begin{lemma}\label{lem:lemma1inBB}
 Let $f: \mathbb R\times\mathbb R^d\times\mathbb R^d \to X$ be Borel measurable 
and be either nonnegative or bounded, and let $s\leq t$. Then
 $$
 \mathbb E \sum_{s<S_i\leq t}F(S_i, X_{s,S_i-, X_{s,S_i}}) = \mathbb E \int_s^t 
\int_{\mathbb R^d}F(v,X_{s,v-, X_{s,v-}+z})\nu(\dd z)\ud v.
 $$
\end{lemma}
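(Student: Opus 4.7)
The plan is to recognize this identity as the compensation (Campbell) formula for the underlying Poisson random measure $\mathcal N$ on $\mathbb R \times \mathbb R^d$, which the statement immediately preceding the lemma already identifies as having intensity $\lambda \otimes \nu$.

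First I would rewrite the left-hand sum as an integral against $\mathcal N$. By construction of the compound Poisson process, at every signal time $S_i$ one has $X_{s,S_i} = X_{s,S_i-} + Z_i$, so
$$\sum_{s<S_i\leq t} F(S_i, X_{s,S_i-}, X_{s,S_i}) = \int_{(s,t]}\int_{\mathbb R^d} F(v, X_{s,v-}, X_{s,v-}+z)\, \mathcal N(\mathrm{d}v, \mathrm{d}z),$$
since $\mathcal N$ charges exactly the atoms $(S_i, Z_i)$. Next I would note that $v\mapsto X_{s,v-}$ is left-continuous and adapted (being the left limit of the c\`adl\`ag process $v \mapsto X_{s,v}$), hence the integrand $(v,z,\omega) \mapsto F(v, X_{s,v-}(\omega), X_{s,v-}(\omega)+z)$ is predictable. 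Applying the standard compensation formula for a Poisson random measure with intensity $\lambda \otimes \nu$ then gives
$$\mathbb E\! \int_{(s,t]}\!\int_{\mathbb R^d}\! F(v, X_{s,v-}, X_{s,v-}+z)\, \mathcal N(\mathrm{d}v,\mathrm{d}z) = \mathbb E\! \int_s^t\!\int_{\mathbb R^d}\! F(v, X_{s,v-}, X_{s,v-}+z)\, \nu(\mathrm{d}z)\,\mathrm{d}v,$$
which is precisely the stated identity.

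For the vector-valued case, since in the applications of this lemma one may (and does, see the discussion preceding Theorem \ref{thm:mainmult}) reduce to $X$ finite dimensional, the identity follows by applying the scalar compensation formula to each coordinate of $F$; alternatively, by approximating $F$ by simple $X$-valued functions $\sum_j g_j(v,x,y)\,x_j$ with $g_j$ scalar and using linearity. The nonnegative-or-bounded hypothesis only serves to make all expectations meaningful: for nonnegative scalar $F$ one invokes Tonelli to avoid integrability issues, while for bounded $X$-valued $F$ the bound $\mathbb E N(s,t) = (t-s)|\nu|<\infty$ gives absolute convergence of both sides.

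The main obstacle is purely bookkeeping: verifying predictability of the integrand and making the vector-valued reduction clean. Once these are in place the result is an immediate instance of Campbell's theorem, which is why the paper remarks that the proof is identical to the scalar case.
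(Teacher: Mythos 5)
Your proof is correct and is essentially the intended argument: the paper itself omits the proof, deferring to \cite[Lemma~1]{BB}, and having just identified $\mathcal N$ as a Poisson random measure with intensity $\lambda\otimes\nu$, the expected route is exactly the compensation (Campbell) formula applied to the predictable integrand $(v,z)\mapsto F(v,X_{s,v-},X_{s,v-}+z)$, with the nonnegative/bounded hypothesis and the finiteness of $\nu$ handling integrability and the coordinatewise reduction handling the (finite-dimensional) vector-valued case, just as you describe.
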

 We will consider the following filtration:
 $$
 \mathbb F = \{\mathcal F_t\}_{t\in \mathbb R} = \{\sigma\{X_{s,t}:s\leq 
t\}\}_{t\in \mathbb R}.
$$
Recall that for measures $\nu_1$ and $\nu_2$ on $\mathbb R^d$ the expression 
$\nu_1 \ast \nu_2$ means the {\it convolution} of these measures (we refer the 
reader \cite[Chapter 3.9]{Bog} for the details). Also for each $n\geq 1$ we 
define $\nu_1^{*n}:= \underbrace{\nu_1\ast \cdots \ast\nu_1}_{n \text{ times}}.$ 
For each $t\in \mathbb R$ define
\begin{equation*}
 p_t = e^{*t(\nu-|\nu|\delta_0)} = 
\sum_{n=0}^{\infty}\frac{t^n}{n!}(\nu-|\nu|\delta_0)^{*n}=e^{-t|\nu|}\sum_{n=0}^
{\infty} \frac{t^n}{n!}\nu^{*n}.
\end{equation*}
The series converges in the norm of absolute variation of measures. As in 
\cite[(18)]{BB} and \cite[(3.9)]{BBB} $p_t$ is symmetric, and 
$$
\frac{\partial}{\partial t}p_t = (\nu - |\nu|\delta_0)* p_t,\;\;\; t\in \mathbb 
R.
$$
Also $p_{t_1+t_2} = p_{t_1}*p_{t_2}$ for each $t_1, t_2\in \mathbb R$. In fact 
for all $t\leq u$ the measure $p_{u-t}$ is the distribution of $X_{t,u}$ and 
$X_{t,u-}$. Put
$$
\Psi(\xi) = \int_{\mathbb R^d} (e^{i\xi \cdot z}-1)\nu(\dd z),\;\;\; \xi\in 
\mathbb R^d.
$$
Thanks to the symmetry of $\nu$ one has as well that
$$
\Psi(\xi) = \int_{\mathbb R^d} (\cos\xi\cdot z - 1)\nu(\dd z) = \Psi(-\xi)\leq 
0.
$$
Therefore $\Psi$ is bounded on $\mathbb R^d$, and due to \cite[(3.12)]{BBB} 
we have that the characteristic function of $p_t$ is of the following form: 
$$
\hat p_t(\xi) = e^{t\Psi(\xi)},\;\;\;\xi \in \mathbb R^d.
$$
(The reader can find more on characteristic functions in \cite[Chapter 
3.8]{Bog}.)

Let $g \in L^{\infty}(\mathbb R^d;X)$. Then for $x\in \mathbb R^d$, $t\leq u$, 
we define the {\it parabolic extension} of $g$ by
\begin{equation*}
 P_{t,u}g(x) := \int_{\mathbb R^d}g(x+y)p_{u-t}(\dd y) = g*p_{u-t}(x)=\mathbb 
Eg(x+X_{t,u}).
\end{equation*}
For $s\leq t\leq u$ we define the {\it parabolic martingale} by
\begin{equation*}
 G_t = G_t(x;s,u;g):= P_{t,u}g(x+X_{s,t}).
\end{equation*}
\begin{lemma}
 We have that $G_t$ is a bounded $\mathbb F$-martingale.
\end{lemma}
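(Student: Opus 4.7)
The plan is to prove both assertions by exhibiting $G_t$ as a closed martingale of the form $\mathbb E[g(x+X_{s,u})\mid \mathcal F_t]$ for $s\le t\le u$, from which boundedness and the martingale property both follow instantly.

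First I would verify that $p_{u-t}$ is a probability measure for $u\ge t$. From the series expansion
$$
p_{u-t}=e^{-(u-t)|\nu|}\sum_{n=0}^\infty\frac{(u-t)^n}{n!}\nu^{*n},
$$
nonnegativity is clear (each $\nu^{*n}$ is a positive measure), and a direct computation of the total mass gives $e^{-(u-t)|\nu|}\cdot e^{(u-t)|\nu|}=1$. Recalling that $p_{u-t}$ was identified above as the distribution of the increment $X_{t,u}$, this is consistent. Consequently
$$
\|G_t(x;s,u;g)\|\le\int_{\mathbb R^d}\|g(x+X_{s,t}+y)\|\,p_{u-t}(\mathrm d y)\le\|g\|_\infty,
$$
which gives the required boundedness.

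For the martingale property, the key point is the independent increments of the compound Poisson process: since the Poisson random measure $\mathcal N$ has independent values on disjoint time strips, $X_{t,u}=\int_{t<v\le u}z\,\mathcal N(\mathrm d v,\mathrm d z)$ is independent of $\mathcal F_t=\sigma\{X_{s,\tau}:s\le\tau\le t\}$. Hence, using that $X_{s,t}$ is $\mathcal F_t$-measurable and $X_{s,u}=X_{s,t}+X_{t,u}$,
$$
G_t=P_{t,u}g(x+X_{s,t})=\mathbb E\bigl[g(x+X_{s,t}+X_{t,u})\,\big|\,\mathcal F_t\bigr]=\mathbb E\bigl[g(x+X_{s,u})\,\big|\,\mathcal F_t\bigr].
$$
Now $g(x+X_{s,u})$ is a fixed bounded $X$-valued random variable (since $g\in L^\infty(\mathbb R^d;X)$), so $(G_t)_{s\le t\le u}$ is a closed martingale, and the tower property yields $\mathbb E[G_t\mid\mathcal F_r]=G_r$ for $s\le r\le t\le u$.

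The only substantive point to justify carefully is the independence of $X_{t,u}$ from $\mathcal F_t$; everything else is formal manipulation. This independence is standard for compound Poisson processes generated by the restriction of a Poisson point process to disjoint time intervals, and is already implicit in the construction of $\mathcal N$ and the $T_i$, $Z_i$ above. I do not foresee any genuine obstacle in this lemma.
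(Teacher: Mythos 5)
Your proof is correct and is the standard argument: the paper does not spell out a proof of this lemma but delegates it to the scalar-valued case in Ba\~nuelos--Bogdan, where exactly this reasoning is used, namely that $p_{u-t}$ is the law of the increment $X_{t,u}$, which is independent of $\mathcal F_t$, so that $G_t=\mathbb E\bigl[g(x+X_{s,u})\mid\mathcal F_t\bigr]$ is a closed martingale bounded by $\|g\|_\infty$. No gaps; the independence of $X_{t,u}$ from $\mathcal F_t$ follows from the independence of the families $\{(S_i,Z_i)\}$ over disjoint time intervals, as you note.
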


Let $\phi\in L^{\infty}(\mathbb R^d;\mathbb C)$ be symmetric. For each $x\in 
\mathbb R^d$, $s\leq t\leq u$, and $f\in C_c(\mathbb R^d;X)$ we define $F_t$ as 
follows:
\begin{align*}
 F_t &= F_t(x;s,u;f,\phi) := \\
 &\sum_{s<S_i\leq t}[P_{S_i,u}f(x+X_{s,S_i})- 
P_{S_i,u}f(x+X_{s,S_i-})]\phi(X_{s, S_i}-X_{s, S_i-})\\
 &- \int_s^t \int_{\mathbb 
R^d}[P_{v,u}f(x+X_{s,v-}+z)-P_{u,v}f(x+X_{s,v-})]\phi(z)\nu(\dd z)\ud v.
\end{align*}
\begin{lemma}
 We have that $F_t = F_t(x;s,u;f,\phi)$ is an $\mathbb F$-martingale for $t\in [s,u]$. 
Moreover, $\mathbb E\|F_t\|^p <\infty$ for each $p>0$.
\end{lemma}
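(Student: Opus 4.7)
The plan is to view $F_t$ as a compensated sum over the jumps of the Poisson point process $\mathcal N$. Setting
$$
H(v, y, z) := \bigl[P_{v,u}f(x+y+z) - P_{v,u}f(x+y)\bigr]\phi(z),
$$
we have
$$
F_t = \sum_{s<S_i\leq t} H(S_i, X_{s,S_i-}, Z_i) - \int_s^t \int_{\mathbb R^d} H(v, X_{s,v-}, z)\,\nu(\dd z)\ud v,
$$
so $F_t$ is the discrepancy between a pure-jump sum and its expected rate. This presentation makes it plausible that the martingale property should follow from Lemma \ref{lem:lemma1inBB} applied on sub-intervals, while the $L^p$ integrability should follow from the uniform boundedness of $H$ together with the fact that the number of signals in $(s,t]$ is Poisson distributed.

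For the $L^p$ bound, since $f \in C_c(\mathbb R^d; X)$ and each $p_{u-v}$ is a probability measure, the parabolic extension satisfies $\sup_{y\in\mathbb R^d}\|P_{v,u}f(y)\| \leq \|f\|_\infty$, whence $\|H(v,y,z)\| \leq 2\|f\|_\infty$ using $|\phi|\leq 1$. The integral term in $F_t$ is therefore bounded deterministically by $2\|f\|_\infty\, |\nu|(u-s)$, while the jump sum is bounded in norm by $2\|f\|_\infty \cdot N(s,t)$, and $N(s,t)$ is Poisson with parameter $(t-s)|\nu|$ and hence lies in $L^p(\Omega)$ for every $p>0$. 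Combining these gives $\mathbb E\|F_t\|^p < \infty$ for each $p>0$.

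For the martingale property I would show $\mathbb E(F_t - F_r \mid \mathcal F_r) = 0$ for $s \leq r \leq t \leq u$. Decomposing $X_{s,v-} = X_{s,r} + X_{r,v-}$ for $v>r$, one observes that $X_{s,r}$ is $\mathcal F_r$-measurable while, by the independent-increments structure of $\mathcal N$, the shifted point process $\{(S_i - r, Z_i) : S_i > r\}$ has the same distribution as the original Poisson point process on $(0,\infty)\times\mathbb R^d$ and is independent of $\mathcal F_r$. Freezing $X_{s,r}$ to its realized value and applying Lemma \ref{lem:lemma1inBB} (adapted to the time interval $[r,t]$ and the shifted process) with $F(v,y,y') := H(v, X_{s,r}+y, y'-y)$ yields equality of the conditional expectations of the jump sum and of the compensator over $[r,t]$, giving $\mathbb E(F_t - F_r \mid \mathcal F_r) = 0$ almost surely.

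The main obstacle is this final step: promoting the unconditional identity of Lemma \ref{lem:lemma1inBB} to its $\mathcal F_r$-conditional version. The uniform boundedness of $H$ obtained above is what lets Fubini and dominated convergence pass under the conditional expectation, so once the independent-increments structure of $\mathcal N$ has been used to treat the post-$r$ jumps as a fresh Poisson point process independent of $\mathcal F_r$, the argument becomes routine.
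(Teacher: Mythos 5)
Your argument is correct and is essentially the proof the paper has in mind: the paper itself omits it, stating only that Lemmas \ref{lem:lemma1inBB}--\ref{lem:lemma5inBB} are proved as in the scalar case of Ba\~nuelos--Bogdan, and your route (uniform bound $\|P_{v,u}f\|_\infty\le\|f\|_\infty$ plus Poisson moments for integrability; independence of the post-$r$ jumps from $\mathcal F_r$ and a conditional application of Lemma \ref{lem:lemma1inBB} for the martingale property) is exactly the argument of \cite[Lemma 3]{BB}. The ``freezing'' step you flag is indeed the only technical point, and it goes through precisely because of the boundedness you established.
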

\begin{lemma}\label{lem:lemma5inBB}
 $G_t(x;s,u;g) = F_t(x;s,u;g,1)+P_{s,u}g(x)$.
\end{lemma}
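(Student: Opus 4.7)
The plan is to show that the identity follows from an It\^o-type decomposition of the pure-jump process $v\mapsto P_{v,u}g(x+X_{s,v})$. Observe that both sides agree at $t=s$: the left-hand side equals $G_s(x;s,u;g) = P_{s,u}g(x)$, while $F_s(x;s,u;g,1)=0$ since both the sum over signals and the integral over $[s,s]$ vanish. So it suffices to match the increments of $G$ with those of $F+P_{s,u}g(x)$ (the latter just being $F_t$ viewed as a function of $t$).

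Next I would decompose the evolution of $G_v := P_{v,u}g(x+X_{s,v})$ into two pieces: (a) its jumps, which occur exactly at the signal times $S_i\in(s,t]$ and contribute
\[
\sum_{s<S_i\leq t}\bigl[P_{S_i,u}g(x+X_{s,S_i})-P_{S_i,u}g(x+X_{s,S_i-})\bigr];
\]
and (b) the drift coming from the time-dependence of the kernel $p_{u-v}$ during intervals where $X_{s,\cdot}$ is constant. For (b), using the forward equation $\tfrac{\partial}{\partial t}p_t=(\nu-|\nu|\delta_0)*p_t$ together with symmetry of $\nu$, differentiation in $v$ under the integral gives
\[
\frac{\partial}{\partial v}P_{v,u}g(x+y)=-\int_{\mathbb R^d}\bigl[P_{v,u}g(x+y+z)-P_{v,u}g(x+y)\bigr]\nu(\dd z),
\]
and integrating this along $[s,t]$ with $y=X_{s,v-}$ (which is piecewise constant, so the integral is unaffected by the finitely many jump points a.s.) yields precisely the second (drift) term in the definition of $F_t$ with $\phi\equiv1$.

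Adding (a) and (b), the total change $G_t-G_s$ coincides with $F_t(x;s,u;g,1)$. Since $G_s=P_{s,u}g(x)$, this yields $G_t=F_t(x;s,u;g,1)+P_{s,u}g(x)$, which is the stated identity.

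The main technical point to handle carefully is the justification of differentiating under the integral sign in (b) and the interchange with the $\nu$-integration; this uses that $g\in C_c(\mathbb R^d;X)$ (so $P_{v,u}g$ is bounded uniformly in $v\le u$) and that the measure $(\nu-|\nu|\delta_0)*p_{u-v}$ is a finite signed measure depending smoothly on $v$ in total variation. Everything else is bookkeeping between the jump-sum and the compensator integral in the definition of $F_t$.
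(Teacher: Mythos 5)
Your proof is correct and follows essentially the same pathwise decomposition---jumps of $v\mapsto P_{v,u}g(x+X_{s,v})$ at the signal times plus the deterministic drift between signals computed from $\frac{\partial}{\partial t}p_t=(\nu-|\nu|\delta_0)*p_t$ and the symmetry of $\nu$---as the argument in Ba\~{n}uelos--Bogdan to which the paper defers for this lemma. No gaps.
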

Analogously to \cite[(21)-(22)]{BB} one has that for each $x^* \in X^*$ the 
quadratic variations of $\langle F_t(x;s,u;f,\phi), x^*\rangle$ and $\langle 
G_t(x;s,u;g), x^*\rangle$ satisfy the following a.s.\ identities,
\begin{equation*}
 [\langle F,x^* \rangle]_t = \sum_{s<S_i\leq t}\Bigl(\langle 
P_{S_i,u}f(x+X_{s,S_i})-P_{S_i,u}f(x+X_{s,S_i-}), 
x^*\rangle\Bigr)^2\phi^2(\Delta X_{s, S_i}),
\end{equation*}
\begin{equation*}
 [\langle G,x^* \rangle]_t = |\langle P_{s,u} g(x), 
x^*\rangle|^2+\sum_{s<S_i\leq t}\Bigl(\langle 
P_{S_i,u}g(x+X_{s,S_i})-P_{S_i,u}g(x+X_{s,S_i-}), x^*\rangle\Bigr)^2.
\end{equation*}
It follows that for each $f\in C_c(\mathbb R^d;X)$, 
$(F_t(x;s,u;f,\phi))_{t\in[s,u]}$ is weakly differentially subordinated to 
$(G_{t}(x;s,u;f))_{t\in[s,u]}$ and by Theorem \ref{thm:weakdiffsubproc} one has 
for each $t\in [s,u]$
$$
\mathbb E\|F_t(x;s,u;f,\phi)\|^p \leq \beta_{p,X}^p \mathbb E\|G_t(x;s,u;f)\|^p.
$$
Note that $G_u(x;s,u;f) = f(x+X_{s,u})$, so
\begin{equation}\label{eq:fourcorfrommainthm}
\begin{split}
 \int_{\mathbb R^d}\mathbb E\|F_u(x;s,u;f,\phi)\|^p\ud x &\leq 
\beta_{p,X}^p\int_{\mathbb R^d}\mathbb E \|f(x+X_{s,u})\|^p\ud x \\
 &= \beta_{p,X}^p \|f\|^p_{L^p(\mathbb R^d;X)}.
\end{split}
\end{equation}
 Let $p'$ be such that $\frac 1p + \frac{1}{p'}=1$. Consider the linear 
functional on $L^{p'}(\mathbb R^d;X^*)$:
$$
L^{p'}(\mathbb R^d;X^*) \ni g\mapsto \int_{\mathbb R^d} \mathbb E \langle 
F_u(x;s,u;f,\phi),g(x+X_{s,u}) \rangle\ud x. 
$$
Then by H\"older's inequality and \eqref{eq:fourcorfrommainthm} one has
$$
\int_{\mathbb R^d}\mathbb E |\langle F_u(x;s,u;f,\phi),g(x+X_{s,u})\rangle|\ud x 
\leq \beta_{p,X}\|f\|_{L^p(\mathbb R^d;X)}\|g\|_{L^{p'}(\mathbb R^d;X^*)}.
$$
By Theorem 1.3.10 and Theorem 1.3.21 in \cite{HNVW1}, $(L^{p'}(\mathbb 
R^d;X^*))^* = L^p(\mathbb R^d;X)$, so there exists $h\in L^p(\mathbb 
R^d;X)$ such that for each $g\in L^{p'}(\mathbb R^d;X^*)$
$$
\int_{\mathbb R^d} \mathbb E \langle F_u(x;s,u;f,\phi),g(x+X_{s,u}) \rangle\ud x 
= \int_{\mathbb R^d}\langle h(x),g(x)\rangle\ud x,
$$
and
\begin{equation}\label{eq;ineqfornorm}
 \|h\|_{L^p(\mathbb R^d;X)}\leq \beta_{p,X}\|f\|_{L^p(\mathbb R^d;X)}.
\end{equation}
In particular, since $X$ is finite dimensional
\begin{equation}\label{eq:fourscalarcase}
 \int_{\mathbb R^d} \mathbb E  F_u(x;s,u;f,\phi)g(x+X_{s,u})\ud x = 
\int_{\mathbb R^d} h(x)g(x)\ud x,\;\;\; g\in L^{p'}(\mathbb R^d).
\end{equation}
For each $s<0$ define $m_s:\mathbb R^d \to \mathbb C$ as follows
$$
m_s(\xi) =
\begin{cases}
 \Bigl(1-e^{2|s|\Psi(\xi)}\Bigr)\frac{1}{\Psi(\xi)}\int_{\mathbb R^d}(e^{i\xi\cdot 
z}-1)\phi(z)\nu(\dd z),\;\;\; &\Psi(\xi)\neq 0,\\
 0,\;\;\;&\Psi(\xi)= 0.
\end{cases}
$$

Let $u=0$. Then analogously to \cite[(32)]{BB}, by 
\eqref{eq:fourscalarcase} one obtains
$$
\mathcal F (h)(\xi) = m_s(\xi) \mathcal F (f)(\xi),\;\;\; \xi \in \mathbb R^d.
$$
Let $T_{m_s}$ be the Fourier multiplier on $L^2(\mathbb R^d;X)$ with symbol 
$m_s$ (that is bounded by 1). By \eqref{eq;ineqfornorm} one obtains that 
$T_{m_s}$ extends uniquely to a bounded operator on $L^p(\mathbb R^d; X)$ with 
$\|T_{m_s}\|_{\mathcal L(L^p(\mathbb R^d;X))}\leq\beta_{p,X}$. Let $T_m$ be the 
multiplier on $L^2(\mathbb R^d;X)$ with the symbol $m$ given by
$$
m(\xi) =
\begin{cases}
 \frac{1}{\Psi(\xi)}\int_{\mathbb R^d}(e^{i\xi\cdot z}-1)\phi(z)\nu(\dd z),\;\;\; 
&\Psi(\xi)\neq 0,\\
 0,\;\;\;&\Psi(\xi)= 0.
\end{cases}
$$
Note that $m$ is a pointwise limit of $m_s$ as $s\to -\infty$. Also note that 
$T_{m_s} f \to T_{m} f$ in $L^2(\mathbb R^d;X)$ as $s\to -\infty$ for each $f\in 
C_c(\mathbb R^d;X)$ by Plancherel's theorem. Therefore by Fatou's lemma one has 
that for each $f\in C_c(\mathbb R^d;X)$ the following holds:
$$
\|T_{m} f\|_{L^p(\mathbb R^d;X)} \leq \varliminf_{s\to -\infty}\|T_{m_s} 
f\|_{L^p(\mathbb R^d;X)}\leq \beta_{p,X} \|f\|_{L^p(\mathbb R^d;X)},
$$
hence $T_{m}$ uniquely extends to a bounded operator on $L^p(\mathbb R^d;X)$ 
with 
$$
\|T_m\|_{\mathcal L(L^p(\mathbb R^d;X))}~\leq~ \beta_{p,X}.
$$

\subsection{Examples of Theorem \ref{thm:mainmult}}\label{sec:examples}
In this subsection $X$ is a UMD Banach space, $p\in (1,\infty)$.
The examples will be mainly the same as were given in \cite[Chapter 4]{BBB} with 
some author's remarks. Recall that we set $\frac {a}{0} = 0$ for any $a \in 
\mathbb C$.

\begin{example}
 Let $V_1, V_2$ be two nonnegative L\'evy measures on $\mathbb R^d$ such that 
$V_1\leq V_2$. Let
 \[
  m(\xi)= \frac{\int_{\mathbb R^d} (1-\cos (\xi \cdot z))V_1(\dd z) 
}{\int_{\mathbb R^d} (1-\cos (\xi \cdot z))V_2(\dd z)}, \;\;\; \xi \in \mathbb 
R^d.
 \]
Then $\|T_m\|_{\mathcal L(L^p(\mathbb R^d; X))}\leq\beta_{p,X}$.
\end{example}

\begin{example}
 Let $\mu_1, \mu_2$ be two nonnegative measures on $S^{d-1}$ such that 
$\mu_1\leq \mu_2$. Let
 \[
 m(\xi) =\frac{ \int_{S^{d-1}}(\xi \cdot 
\theta)^{2}\mu_1(\dd\theta)}{\int_{S^{d-1}}(\xi \cdot 
\theta)^{2}\mu_2(\dd\theta)}, \;\;\; \xi \in \mathbb R^d.
 \]
Then $\|T_m\|_{\mathcal L(L^p(\mathbb R^d; X))}\leq\beta_{p,X}$.
\end{example}

\begin{example}[Beurling-Ahlfors transform]
 Let $d=2$. Put $\mathbb R^2 = \mathbb C$. Then the Fourier multiplier $T_m$ 
with a symbol $m(z) = \frac{\bar z^2}{|z|^2}$, $z \in \mathbb C$, has the norm 
at most $2\beta_{p,X}$ on $L^p(\mathbb R^d; X)$. 
 This multiplier is also known as the {\it Beurling-Ahlfors transform}. It is 
well-known that $\|T_m\|_{\mathcal L(L^p(\mathbb R^2; X))}\geq \beta_{p,X}$. 
There is quite an old problem whether $\|T_m\|_{\mathcal L(L^p(\mathbb R^2; 
X))}= \beta_{p,X}$. This question was firstly posed by Iwaniec in \cite{Iw82} in 
$\mathbb C$. Nevertheless it was neither proved nor disproved even in the 
scalar-valued case. We refer the reader to \cite{Ban10} and \cite{HNVW1} for 
further details.
\end{example}

\begin{example}\label{ex:BBB4.7}
 Let $\alpha \in (0,2)$, $\mu$ be a finite positive measure on $S^{d-1}$, $\psi$ 
be a measurable function on $S^{d-1}$ such that $|\psi|\leq 1$. Let
 \[
 m(\xi) =\frac{ \int_{S^{d-1}}|(\xi \cdot 
\theta)|^{\alpha}\psi(\theta)\mu(\dd\theta)}{\int_{S^{d-1}}|(\xi \cdot 
\theta)|^{\alpha}\mu(\dd\theta)}, \;\;\; \xi \in \mathbb R^d.
 \]
Then analogously to \cite[(4.7)]{BBB}, $\|T_m\|_{\mathcal L(L^p(\mathbb R^d; 
X))}\leq\beta_{p,X}$.
\end{example}

\begin{example}[Double Riesz transform]
 Let $\alpha \in (0,2]$. Let
 \[
 m(\xi) = \frac{|\xi_1|^{\alpha}}{|\xi_1|^{\alpha}+\cdots +|\xi_d|^{\alpha}}, 
\;\;\; \xi = (\xi_1,\ldots, \xi_d) \in \mathbb R^d,
 \]
Then according to Example \ref{ex:BBB4.7}, $\|T_m\|_{\mathcal L(L^p(\mathbb R^d; 
X))}\leq\beta_{p,X}$. Note that if $\alpha = 2$, then $T_m$ is a double Riesz 
transform. (In the forthcoming paper \cite{Y18UMDAp} it is shown that the norm $\|T_m\|_{\mathcal L(L^p(\mathbb R^d; X))}$ does not depend on $\alpha$ and equals the UMD$_p^{\{0,1\}}$ constant of~$X$).
\end{example}

\begin{example}
 Let $\alpha \in [0,2]$, $d\geq 2$. Let
 \[
 m(\xi) = \frac{|\xi_1|^{\alpha}-|\xi_2|^{\alpha}}{|\xi_1|^{\alpha}+\cdots 
+|\xi_d|^{\alpha}}, \;\;\; \xi = (\xi_1,\ldots, \xi_d) \in \mathbb R^d,
 \]
Then by Example \ref{ex:BBB4.7}, $\|T_m\|_{\mathcal L(L^p(\mathbb R^d; 
X))}\leq\beta_{p,X}$. Moreover, if $d=2$, $\alpha~\in~[1,2]$, then $\max_{\xi\in 
\mathbb R^2} m(\xi) = 1$, $\min_{\xi\in \mathbb R^2} m(\xi) = -1$ and 
$m|_{S^1}\in W^{2,1}(S^1)$. Therefore due to Proposition~3.4, Proposition~3.8 
and Remark~3.9 in \cite{GM-SS} one has $\|T_m\|_{\mathcal L(L^p(\mathbb R^2;X))} 
\geq \beta_{p,X}$. This together with Theorem \ref{thm:mainmult} implies 
$\|T_m\|_{\mathcal L(L^p(\mathbb R^2;X))} = \beta_{p,X}$, which extends 
\cite[Theorem 1.1]{GM-SS}, where the same assertion was proven for $\alpha = 2$.
\end{example}

\begin{example}
 Let $\mu$ be a nonnegative Borel measure on $S^{d-1}$, $\psi \in 
L^{\infty}(S^{d-1}, \mu)$, $\|\psi\|_{\infty} \leq 1$. Let
 \[
  m(\xi)= \frac{\int_{S^{d-1}} \ln(1+(\xi\cdot \theta)^{-2})\psi(\theta)\mu(\dd 
\theta)}{\int_{S^{d-1}} \ln(1+(\xi\cdot \theta)^{-2})\mu(\dd \theta)},\;\;\;\xi 
\in \mathbb R^d.
 \]
Then $\|T_m\|_{\mathcal L(L^p(\mathbb R^d; X))}\leq\beta_{p,X}$.
\end{example}

\section{Hilbert transform and general conjecture}\label{sec:Hilbtrans}

In this section we assume that $X$ is a finite dimensional Banach space to avoid 
difficulties with stochastic integration. Many of the assertions below can be 
extended to the general UMD Banach space case by using the same techniques as in 
the proof of Theorem~\ref{thm:forseq}.

\subsection{Hilbert transform and Burkholder functions}
It turns out that the generalization of Theorem \ref{thm:weakdiffsubproc} to the 
case of continuous martingales is connected with the boundedness of the Hilbert 
transform. The Fourier multiplier $\mathcal H\in \mathcal L(L^2(\mathbb R))$ with the 
symbol $m\in L^{\infty}(\mathbb R)$ such that $m(t) = -i\; \sign(t)$, 
$t\in\mathbb R$, is called the {\it Hilbert transform}. This operator can be 
extended to a bounded operator on $L^p(\mathbb R)$, $1<p<\infty$ (see 
\cite{Riesz28} and \cite[Chapter 5.1]{HNVW1} for the details).

Let $X$ be a Banach space. Then one can extend the Hilbert transform $\mathcal 
H$ to $\mathcal S(\mathbb R)\otimes X$ in the same way as it was done in 
\eqref{eq:multSotimesX}. Denote this extension by $\mathcal H_X$.
By \cite[Lemma 2]{Bour83} and \cite[Theorem 3]{Gar85} the following holds 
true:
\begin{theorem}[Bourgain, Burkholder]
 Let $X$ be a Banach space. Then $X$ is a~UMD Banach space if and only if 
$\mathcal H_X$ can be extended to a bounded operator on $L^p(\mathbb R; X)$ for 
each $1<p<\infty$. Moreover, then
 \begin{equation}\label{eq:squaredepend}
  \sqrt {\beta_{p,X}}\leq \|\mathcal H_X\|_{\mathcal L(L^p(\mathbb R; X))} \leq 
\beta_{p,X}^2.
 \end{equation}
\end{theorem}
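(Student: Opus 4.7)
My plan is to invoke the two halves of this result separately, since each is a classical theorem of one of the attributed authors. The ``only if'' direction (upper bound) is due to Garling \cite{Gar85}, and the ``if'' direction (lower bound) is due to Bourgain \cite{Bour83}; in a paper of this flavour I expect the proof to be mostly a pointer to these references, with perhaps a sketch of the key estimates.

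For the upper bound $\|\mathcal H_X\|_{\mathcal L(L^p(\mathbb R; X))} \leq \beta_{p,X}^2$, the plan is to follow Garling's probabilistic representation of the conjugate function. Given $f \in \mathcal S(\mathbb R)\otimes X$, one extends $f$ harmonically to the upper half plane, obtains its harmonic conjugate $\widetilde f$, and notes that $\mathcal H_X f$ is the boundary trace of $\widetilde f$. Running a planar Brownian motion started far up and stopped on the real axis, the composition with $\widetilde f$ is (pathwise) a martingale transform of the composition with the harmonic extension, the transforming operator being the rotation by $i$. A conditional-expectation and duality argument turns this into an estimate of the form
\[
 \|\mathcal H_X f\|_{L^p(\mathbb R; X)} \leq \beta_{p,X}\, \|M\|_{L^p}, \qquad \|M\|_{L^p} \leq \beta_{p,X}\, \|f\|_{L^p(\mathbb R; X)},
\]
where $M$ is the auxiliary martingale; each appeal to the UMD property contributes a factor of $\beta_{p,X}$, producing the square. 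I would simply cite this, since the present paper (see its introductory discussion of $\mathcal H_X$) already treats Garling's bound as given.

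For the lower bound $\sqrt{\beta_{p,X}} \leq \|\mathcal H_X\|_{\mathcal L(L^p(\mathbb R; X))}$, the idea is Bourgain's transference. Given an $X$-valued martingale difference sequence $(d_n)_{n=1}^N$ together with signs $\eps_n \in \{-1,+1\}$, one builds a step function $f$ on $\mathbb R$ by placing suitably rescaled copies of the $d_n$ on widely separated intervals $I_n$, chosen so that the $1/(x-y)$ kernel of $\mathcal H$, evaluated on a companion system of intervals $J_n$, reproduces (up to an absolutely small cross term) the signed pattern $\sum \eps_n d_n$. Computing $L^p$ norms on $\bigcup J_n$ and $\bigcup I_n$ respectively, and invoking the assumed boundedness of $\mathcal H_X$, yields
\[
 \Bigl(\mathbb E \Bigl\|\sum_n \eps_n d_n\Bigr\|^p\Bigr)^{1/p} \leq \|\mathcal H_X\|_{\mathcal L(L^p(\mathbb R; X))}^2 \Bigl(\mathbb E \Bigl\|\sum_n d_n\Bigr\|^p\Bigr)^{1/p};
\]
the square on the operator norm appears because one must apply $\mathcal H_X$ together with a reflection/symmetrization to isolate the sign pattern from the non-local kernel, as in \cite[Lemma 2]{Bour83}. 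This gives $\beta_{p,X} \leq \|\mathcal H_X\|^2$, hence the claimed square-root estimate.

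The main obstacle would be the lower bound, specifically the bookkeeping needed to show that the off-diagonal contributions of the Hilbert kernel across the intervals $I_n, J_n$ are negligible in $L^p$ compared to the diagonal contributions that encode the signs $\eps_n$; the upper bound, by contrast, is by now a textbook consequence of the UMD axiom once one has the harmonic-extension/Brownian representation in hand.
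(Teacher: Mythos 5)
Your proposal is correct and matches the paper's treatment: the paper likewise gives no self-contained proof, deferring to \cite[Lemma 2]{Bour83} for the lower bound and \cite[Theorem 3]{Gar85} for the upper bound, exactly as you propose. The only supplementary material the paper supplies is Proposition \ref{thm:01-10matrixsquarebound} (the estimate $\|f_2\cdot B_1 - f_1\cdot B_2\|_p \leq \beta_{p,X}^2\|f_1\cdot B_1 + f_2\cdot B_2\|_p$, obtained by applying the decoupling theorem twice), which is the same two-factors-of-$\beta_{p,X}$ mechanism behind Garling's argument that you describe.
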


The proof of the right-hand side of \eqref{eq:squaredepend} is based on the 
following result.
\begin{prop}\label{thm:01-10matrixsquarebound}
 Let $X$ be a finite dimensional Banach space, $B_1$, $B_2$ be two real-valued 
Wiener processes, $f_1, f_2:\mathbb R_+ \times \Omega \to X$ be two 
stochastically integrable functions. Let us define $M:= f_1 \cdot B_1 + f_2 
\cdot B_2$, $N:= f_2 \cdot B_1 - f_1 \cdot B_2$. Then for each $T\geq 0$
 \begin{equation*}
  (\mathbb E\|N_T\|^p)^{\frac 1p} \leq \beta_{p, X}^2(\mathbb E\|M_T\|^p)^{\frac 
1p}.
 \end{equation*}
\end{prop}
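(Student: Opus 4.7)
The plan is to apply the discrete UMD inequality (Theorem~\ref{thm:forseq}) twice through the carefully chosen intermediate martingale
$$P := f_2 \cdot B_1 + f_1 \cdot B_2,$$
splitting the bound as $\|N_T\|_p \leq \beta_{p,X}\|P_T\|_p \leq \beta_{p,X}^2 \|M_T\|_p$.

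The first inequality is the easier one: $N = f_2 \cdot B_1 - f_1 \cdot B_2$ differs from $P$ only in the sign of the $B_2$-integral. I would discretize on a partition $0 = t_0 < \cdots < t_K = T$ and work in the enlarged filtration that reveals $\Delta B_1(n)$ and $\Delta B_2(n)$ successively. In this filtration the approximating martingale $P^\pi$ has $2K$ differences, and those of $N^\pi$ are obtained from them by multiplication by the predictable signs $(+1, -1, +1, -1, \dots)$, all of absolute value $1$. Theorem~\ref{thm:forseq} then gives $\|N^\pi_T\|_p \leq \beta_{p,X}\|P^\pi_T\|_p$ for each partition, and letting the mesh tend to zero yields $\|N_T\|_p \leq \beta_{p,X}\|P_T\|_p$.

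For the second inequality, the key observation is an orthogonal rotation: $B_1' := (B_1+B_2)/\sqrt{2}$ and $B_2' := (B_1-B_2)/\sqrt{2}$ form another pair of independent standard Brownian motions generating the same filtration. Setting $g_1 := (f_1+f_2)/\sqrt{2}$ and $g_2 := (f_1-f_2)/\sqrt{2}$, a direct computation yields
$$M = g_1 \cdot B_1' + g_2 \cdot B_2', \qquad P = g_1 \cdot B_1' - g_2 \cdot B_2',$$
so in the new filtration $P$ is obtained from $M$ by exactly the same kind of sign change on the second integral. Discretizing with respect to $(B_1',B_2')$ and applying Theorem~\ref{thm:forseq} as above gives $\|P_T\|_p \leq \beta_{p,X}\|M_T\|_p$.

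The main obstacle is that a naive comparison of $M$ and $P$ in the original $(B_1, B_2)$-filtration does not express $P$ as a \emph{scalar} martingale transform of $M$: the would-be multiplier at step $2n-1$ would be the vector-ratio $f_2^n / f_1^n$, which generally makes no sense. The rotation trick is precisely what converts the swap of integrands into an honest scalar sign change to which Theorem~\ref{thm:forseq} applies; this is the reason we pay a factor $\beta_{p,X}^2$ rather than $\beta_{p,X}$. A minor remaining task is justifying the continuous-to-discrete transfer, which is routine: approximate $f_1, f_2$ by elementary progressive processes, apply Theorem~\ref{thm:forseq} to the resulting discrete martingales in the enlarged filtration, and pass to the limit using the $L^p$-convergence of stochastic integrals guaranteed by the finite-dimensionality of $X$ together with Lemma~\ref{lemma:stochintmoment}.
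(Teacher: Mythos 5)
Your proof is correct, but it follows a genuinely different route from the paper. The paper's argument is a double application of the decoupling theorem \cite[Theorem 4.4.1]{HNVW1}: one decouples $(B_1,B_2)$ into an independent copy $(\widetilde B_1,\widetilde B_2)$ at cost $\beta_{p,X}$, uses that $(-\widetilde B_2,\widetilde B_1)$ is again a two-dimensional Wiener process independent of $\mathcal F$ to identify the decoupled version of $N$ with the decoupled version of $M$ in distribution, and then decouples back at cost another $\beta_{p,X}$ --- no discretization and no intermediate martingale appear. You instead stay entirely on the adapted side: you pass through $P=f_2\cdot B_1+f_1\cdot B_2$, and each of the two steps is an honest martingale transform by the deterministic signs $(+1,-1,+1,-1,\dots)$ after interlacing the increments of the two driving Brownian motions, the second step being enabled by the explicit $\pi/4$-rotation $B_1'=(B_1+B_2)/\sqrt2$, $B_2'=(B_1-B_2)/\sqrt2$ (your algebra $M=g_1\cdot B_1'+g_2\cdot B_2'$, $P=g_1\cdot B_1'-g_2\cdot B_2'$ checks out). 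Both arguments ultimately exploit rotational invariance of the planar Wiener process and both yield the constant $\beta_{p,X}^2$; yours buys self-containedness --- since your multiplying sequence is deterministic you do not even need Theorem~\ref{thm:forseq}, only the definition of UMD, and you avoid invoking the decoupling theorem --- at the price of the discretization and limiting argument, while the paper's is shorter once decoupling is taken as known. Two small points to tidy up: the relevant property of $(B_1',B_2')$ is not that they generate the same filtration but that they remain independent $\mathbb F$-Wiener processes for the original filtration to which $f_1,f_2$ are adapted (true, since orthogonal combinations of independent $\mathbb F$-Wiener processes are again such); and the independence of $B_1$ and $B_2$, which your rotation requires, is implicit in the statement and is equally essential in the paper's proof.
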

\begin{proof}
 The theorem follows from \cite{Y17MartDec}. Nevertheless we wish to illustrate 
an easier and more specific proof. Let $\widetilde B_1, \widetilde 
B_2:\widetilde {\Omega} \times \mathbb R_+ \to \mathbb R$ be two Wiener process 
defined on an enlarged probability space $(\widetilde{\Omega}, \widetilde 
{\mathcal F}, \widetilde {\mathbb P})$ with an enlarged filtration 
$\widetilde{\mathbb F} = (\widetilde{\mathcal F}_t)_{t\geq 0}$ such that 
$\widetilde B_1$ and $\widetilde B_2$ are independent of $\mathcal F$. Then 
by applying the decoupling theorem \cite[Theorem 4.4.1]{HNVW1} twice
(see also \cite{Mcc89}) and the fact that $-\widetilde B_1$ is a Wiener process
 \begin{align*}
  \mathbb E\|N_T\|^p = \mathbb E\|(f_2\cdot B_1)_T - (f_1\cdot B_2)_T\|^p &\leq 
\beta_{p,X}^p\mathbb E\|(f_2\cdot \widetilde B_1)_T - (f_1\cdot \widetilde 
B_2)_T\|^p\\
  &=\beta_{p,X}^p\mathbb E\|(f_1\cdot (-\widetilde B_2))_T + (f_2\cdot 
\widetilde B_1)_T\|^p\\
  &\leq \beta_{p,X}^{2p}\mathbb E\|(f_1\cdot  B_1)_T + (f_2\cdot  B_2)_T\|^p\\
  &=\beta_{p,X}^{2p}\mathbb E\|M_T\|^p.
 \end{align*}
\end{proof}

Let $p\in (1,\infty)$. A natural question is whether there exists a constant 
$C_p >0$ such that
\begin{equation}\label{eq:lindepend}
\|\mathcal H_X\|_{\mathcal L(L^p(\mathbb R; X))} \leq C_p \beta_{p,X}.
\end{equation}
Then the following theorem is applicable.
\begin{theorem}\label{thm:C_p}
 Let $X$ be a Banach space, $p\in(1,\infty)$. Then there exists $C_p \geq 1$ 
such that \eqref{eq:lindepend} holds if there exists \textbf{some} Burkholder 
function $U: X\times X \to \mathbb R$ such that $U$ is continuous and a.s.\ 
twice Fr\'echet differentiable, $U(x, y) \geq \|y\|^p - (C_p\beta_{p,X})^p 
\|x\|^p$ for any $x, y\in X$, $U(\alpha x, \alpha y) = |\alpha|^p U(x, y)$ for 
any $\alpha \in \mathbb R$ and $x, y\in X$, and the function
  \begin{equation*}
   t\mapsto U(x+tz_1, y+tz_2)+ U(x+ tz_2, y-tz_1),\;\;\; t\in \mathbb R,
  \end{equation*}
  or, equivalently,
  \begin{equation*}
   t\mapsto U(x+tz_1, y+tz_2)+ U(x- tz_2, y+tz_1),\;\;\; t\in \mathbb R,
  \end{equation*}
  is concave for each $x,y,z_1,z_2 \in X$ at $t=0$.
\end{theorem}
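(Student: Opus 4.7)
The plan is to prove Theorem \ref{thm:C_p} in two stages: first, use the assumed Burkholder function $U$ to derive the moment bound
\[
(\mathbb{E}\|N_T\|^p)^{1/p} \leq C_p\beta_{p,X}(\mathbb{E}\|M_T\|^p)^{1/p}, \qquad T\geq 0,
\]
for the rotated pair $M = f_1\cdot B_1 + f_2\cdot B_2$, $N = f_2\cdot B_1 - f_1\cdot B_2$ of Proposition \ref{thm:01-10matrixsquarebound}; second, invoke the Gundy--Varopoulos stochastic representation of the Hilbert transform, in which $\mathcal H_X f$ arises (up to a constant factor and a limit as the starting height of a background planar Brownian motion tends to infinity) as a projection of such a rotated martingale built from the Poisson extension of $f$. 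The $X$-valued version of the representation is just the scalar one applied coordinate-wise, and so the first stage is the real content of the theorem.

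For the first stage, apply It\^o's formula to $U(M_t,N_t)$. Assume first that $U$ is smooth; the general case follows by regularization, discussed below. Computing the quadratic covariations from the definitions of $M$ and $N$ gives $\dd [M^i,M^j]_s = \dd [N^i,N^j]_s = (f_1^i f_1^j + f_2^i f_2^j)\ud s$ and $\dd [M^i,N^j]_s = (f_1^i f_2^j - f_2^i f_1^j)\ud s$. Substituting these into the second-order It\^o term and rearranging shows it equals $\tfrac{1}{2}\Psi_U(M_s,N_s;f_1(s),f_2(s))\ud s$, where
\[
\Psi_U(x,y;z_1,z_2) := \frac{\dd^2}{\dd t^2}\bigg|_{t=0}\bigl[U(x+tz_1,y+tz_2) + U(x+tz_2,y-tz_1)\bigr].
\]
By the assumed concavity of the bracketed map at $t=0$, $\Psi_U \leq 0$. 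The first-order It\^o terms form a true martingale thanks to the growth bound $\|\partial_x U(x,y)\|,\|\partial_y U(x,y)\| \lesssim \|x\|^{p-1} + \|y\|^{p-1}$, which is a consequence of $p$-homogeneity and continuity of $U$ exactly as in Lemma \ref{lem:V_xV_yarenice}, combined with Lemma \ref{lemma:stochintmoment}. Taking expectations, standard localization then delivers $\mathbb{E} U(M_T,N_T) \leq \mathbb{E} U(M_0,N_0) = U(0,0) = 0$, where the final equalities use $M_0 = N_0 = 0$ a.s.\ and $p$-homogeneity. Combining with the lower bound $U(x,y) \geq \|y\|^p - (C_p\beta_{p,X})^p\|x\|^p$ completes the first stage.

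The main obstacle is the regularization, since $U$ is only a.s.\ twice Fr\'echet differentiable in the Alexandrov sense, cf.\ Remark \ref{rem:findimXforV}. The plan is to convolve $U$ with a smooth, compactly supported mollifier $\rho_\eps$ on $X\times X$; the crucial point is that the rotation-concavity at $t=0$ is preserved by this mollification, because convolution commutes with the $t$-derivative, so the second derivative of $U_\eps$ at $t=0$ along the rotation direction becomes an average of nonpositive quantities. The lower bound survives up to an error which vanishes as $\eps\to 0$, and the growth estimates on $\partial U_\eps$ are uniform in $\eps$, so dominated convergence closes the passage to the limit in the moment inequality. A secondary issue is that exact $p$-homogeneity is only approximately preserved by mollification; however we only use its consequences $U(0,0) = 0$ and $U(x,\pm x)\leq 0$ from Remark \ref{rem:propsofU}, and these too survive regularization up to errors vanishing with $\eps$.
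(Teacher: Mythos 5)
Your proposal follows essentially the same route as the paper's proof: apply It\^o's formula to $U(M_t,N_t)$ for the rotated pair $M=f_1\cdot B_1+f_2\cdot B_2$, $N=f_2\cdot B_1-f_1\cdot B_2$, use the quadratic covariation identities to rewrite the second-order term as the second derivative at $u=0$ of $u\mapsto U(x+uf_1,y+uf_2)+U(x+uf_2,y-uf_1)$ (hence $\leq 0$ by hypothesis), show the first-order terms form a mean-zero martingale via the growth bound on $\partial U$ and Lemma \ref{lemma:stochintmoment}, and then pass from the resulting estimate \eqref{eq:N_tM_tC_pbeta_p,X} to the Hilbert transform by the standard Brownian representation (the paper cites Garling and Ba\~nuelos--Wang for this last step). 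The only difference is in how the lack of everywhere-twice-differentiability is handled -- the paper perturbs $M,N$ to have absolutely continuous laws, whereas you mollify $U$ -- which is a cosmetic variation of the same regularization idea.
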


For the proof of Theorem \ref{thm:C_p} we will need a variant of the It\^o 
formula for a general basis of a finite dimensional linear space.

\begin{defi}\label{def:corrbasis}
 Let $d$ be a natural number, $E$ be a $d$-dimensional linear space, 
$(e_n)_{n=1}^d$ be a basis of~$E$. Then $(e_n^*)_{n=1}^d\subset E^*$ is called 
the {\it corresponding dual basis} of $(e_n)_{n=1}^d$ if $\langle e_n, 
e_m^*\rangle = \delta_{nm}$ for each $m,n=1,\ldots, d$.
\end{defi}

 Note that the corresponding dual basis is uniquely determined. Moreover, if 
$(e_n^*)_{n=1}^d$ is the corresponding dual basis of $(e_n)_{n=1}^d$, then, the 
other way around, $(e_n)_{n=1}^d$ is the corresponding dual basis of 
$(e_n^*)_{n=1}^d$ (here we identify $E^{**}$ with $E$ in the natural way).

The following theorem is a variation of \cite[Theorem 26.7]{Kal} which does not 
use the Hilbert space structure of a finite dimensional space.

\begin{theorem}[It\^o formula]\label{thm:itoformula}
 Let $d$ be a natural number, $X$ be a $d$-dimensional Banach space, $f\in 
C^2(X)$, $M:\mathbb R_+ \times \Omega \to X$ be a martingale. Let 
$(x_n)_{n=1}^d$ be a basis of $X$, $(x_n^*)_{n=1}^d$ be the corresponding dual 
basis. Then for each $t\geq 0$
 \begin{equation}\label{eq:itoformula}
  \begin{split}
   f(M_t) = f(M_0)&+ \int_0^t \langle \partial_xf(M_{s-}), \ud M_s\rangle\\
&+ \frac 12 \int_0^t \sum_{n,m=1}^d f_{x_n, x_m}(M_{s-})\ud[\langle M, 
x_n^*\rangle,\langle M, x_m^*\rangle]_s^c\\
&+ \sum_{s\leq t}(\Delta f(M_s) - \langle  \partial_xf(M_{s-}), \Delta 
M_{s}\rangle).
  \end{split}
 \end{equation}
\end{theorem}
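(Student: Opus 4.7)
The plan is to reduce Theorem \ref{thm:itoformula} to the classical coordinate It\^o formula in $\mathbb{R}^d$ (e.g., \cite[Theorem 26.7]{Kal}) by identifying $X$ with $\mathbb{R}^d$ through the chosen basis, and then to rewrite each term in a basis-independent form using the duality $\langle x_n, x_m^*\rangle = \delta_{nm}$.

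First I would set $M^n_t := \langle M_t, x_n^*\rangle$ for $n=1,\dots,d$, so that each $M^n$ is a real-valued local martingale and $M_t = \sum_{n=1}^d M^n_t \, x_n$. Define $g : \mathbb{R}^d \to \mathbb{R}$ by $g(a_1,\dots,a_d) := f\bigl(\sum_{n=1}^d a_n x_n\bigr)$. Since $a \mapsto \sum_n a_n x_n$ is a linear isomorphism between $\mathbb{R}^d$ and $X$ and $f \in C^2(X)$, we have $g \in C^2(\mathbb{R}^d)$. Applying the standard $\mathbb{R}^d$-valued It\^o formula to $g$ and the semimartingale $(M^1,\dots,M^d)$ yields
\begin{align*}
g(M^1_t,\dots,M^d_t) &= g(M^1_0,\dots,M^d_0) + \sum_{n=1}^d \int_0^t \partial_n g(M^1_{s-},\dots,M^d_{s-})\ud M^n_s\\
&\quad + \tfrac 12 \sum_{n,m=1}^d \int_0^t \partial_n\partial_m g(M^1_{s-},\dots,M^d_{s-})\ud [M^n,M^m]^c_s\\
&\quad + \sum_{s\leq t}\Bigl(\Delta g(M^1_s,\dots,M^d_s) - \sum_{n=1}^d \partial_n g(M^1_{s-},\dots,M^d_{s-})\,\Delta M^n_s\Bigr).
\end{align*}

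Next I would translate each term back to $X$. By the chain rule, $\partial_n g(a) = \langle \partial_x f(\sum_k a_k x_k), x_n\rangle$ and $\partial_n \partial_m g(a) = f_{x_n,x_m}(\sum_k a_k x_k)$. Combined with $dM_s = \sum_n (dM^n_s)\, x_n$ and the linearity of $\langle \partial_x f(M_{s-}), \cdot\rangle$, this gives
\[
\sum_{n=1}^d \int_0^t \partial_n g(M_{s-}^\bullet)\ud M^n_s = \int_0^t \langle \partial_x f(M_{s-}),\ud M_s\rangle,
\]
which is exactly the first integral in \eqref{eq:itoformula}. The second integral needs no work since $[\langle M, x_n^*\rangle,\langle M, x_m^*\rangle]^c = [M^n,M^m]^c$ by definition. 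For the jump term, $g(M^1_s,\dots,M^d_s) = f(M_s)$, so $\Delta g = \Delta f(M_s)$, and by the same decomposition of $\Delta M_s$ in the basis $(x_n)$, $\sum_n \partial_n g(M_{s-}^\bullet)\, \Delta M^n_s = \langle \partial_x f(M_{s-}), \Delta M_s\rangle$. Summing over $s \le t$ recovers the last line of \eqref{eq:itoformula}.

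The only non-mechanical point is justifying the chain-rule identification of the Fr\'echet derivatives in a basis-dependent way, which however is immediate since $\partial_x f(y) \in X^*$ acts linearly and $\{x_n\}$ spans $X$; no convergence issues arise because $d<\infty$. All other steps are either direct consequences of the finite-dimensional It\^o formula or follow from the defining property $\langle x_n, x_m^*\rangle = \delta_{nm}$ of the corresponding dual basis, which ensures that coordinate processes with respect to $(x_n)$ are exactly $\langle M, x_n^*\rangle$. I expect no real obstacles: the theorem is essentially a coordinate-free repackaging of the classical It\^o formula, and the main content lies in fixing notation carefully so that $\partial_x f$ is treated as an element of $X^*$ rather than a gradient.
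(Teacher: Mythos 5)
Your proposal is correct and follows essentially the same route as the paper: the paper's proof likewise reduces to \cite[Theorem 26.7]{Kal} by using the basis $(x_n)_{n=1}^d$ to identify $X$ with a Euclidean space in which that basis is orthonormal, so that the coordinate processes are exactly $\langle M, x_n^*\rangle$ and the formula \eqref{eq:itoformula} becomes a restatement of the classical one. Your additional chain-rule bookkeeping for $\partial_n g$ and $\partial_n\partial_m g$ is the same identification, just written out more explicitly.
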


\begin{proof}
 To apply \cite[Theorem 26.7]{Kal} one needs only to endow $X$ with a proper 
Euclidean norm $\vertiii{\cdot}$. Define $\vertiii{x} = \sum_{n=1}^d \langle x, 
x_n^*\rangle^2$ for each $x\in X$. Then $(x_n)_{n=1}^d$ is an orthonormal basis 
of $(X, \vertiii{\cdot})$, $M = \sum_{n=1}^d \langle M, x_n^*\rangle x_n$ is a 
decomposition of $M$ in this orthonormal basis, and therefore 
\eqref{eq:itoformula} is equivalent to the formula in \cite[Theorem 26.7]{Kal}.
\end{proof}

\begin{proof}[Proof of Theorem \ref{thm:C_p}]
 Let $M$ and $N$ be as in Proposition \ref{thm:01-10matrixsquarebound}. By the 
approximation argument we can suppose that  $M$ and $N$ have absolutely 
continuous distributions. Let $d$ be the dimension of $X$. Then by the It\^o 
formula in Theorem~\ref{thm:itoformula}
     \begin{equation}\label{eq:ineq+itoform}
      \begin{split}     
      \mathbb E\|N_t\|_X^p - (C_p\beta_{p, X})^p\mathbb E\|M_t\|_X^p &\leq 
\mathbb E U(M_t, N_t)
      = \mathbb EU(M_0, N_0) \\
      &+ \mathbb E\int_0^t\langle \partial_x U(M_{s}, N_{s}),\ud M_s\rangle\\
      &+ \mathbb E\int_0^t\langle \partial_y U(M_{s}, N_{s}),\ud N_s\rangle + 
\frac 12\mathbb E I,
      \end{split}
     \end{equation}
     where 
     \begin{multline}\label{eq:Iexpression}
            I = \int_0^t \sum_{i,j=1}^d(U_{x_i,x_j}(M_{s}, N_{s}) \ud [\langle 
x_i^*, M_s\rangle, \langle x_j^*, M_s\rangle]\\
            + 2U_{x_i,y_j}(M_{s}, N_{s}) \ud [\langle x_i^*, M_s\rangle, \langle 
y_j^*, N_s\rangle]\\
            + U_{y_i,y_j}(M_{s}, N_{s}) \ud [\langle y_i^*, N_s\rangle, \langle 
y_j^*, N_s\rangle]),
     \end{multline}
where $(x_i)_{i=1}^d = (y_i)_{i=1}^d\subset X$ is the same basis of $X$, and 
$(x_i^*)_{i=1}^d = (y_i^*)_{i=1}^d\subset X^*$ are the same corresponding dual 
bases of $X^*$. 

Notice that by Remark \ref{rem:propsofU} $\mathbb EU(M_0, N_0)\leq 0$ since 
$\|N_0\|\leq \|M_0\|$ a.s.\ and $C_p, \beta_{p, X}\geq 1$, and that
\[
 \mathbb E\Bigl(\int_0^t\langle \partial_x U(M_{s}, N_{s}),\ud M_s\rangle + 
\int_0^t\langle \partial_y U(M_{s}, N_{s}),\ud N_s\rangle \Bigr)= 0,
\]
since due to the same type of discussion as was done in the proof of 
Theorem~\ref{thm:weakdiffsubproc}, $\int_0^\cdot\langle \partial_x U(M_{s}, 
N_{s}),\ud M_s\rangle+\int_0^\cdot\langle \partial_y U(M_{s}, N_{s}),\ud 
N_s\rangle$ is a martingale which starts at zero.

Let us now prove that $I \leq 0$. For each $i = 1,2,\ldots, d$ we define $f_i^1 
:= \langle x^*_i, f_1\rangle$ and $f_i^2 := \langle x^*_i, f_2\rangle$. Then for 
each $i, j = 1,2\ldots, d$ one has that
\begin{equation}\label{eq:MMNN}
 \ud[\langle x_i^*, M_s\rangle, \langle x_j^*, M_s\rangle] = \ud[\langle y_i^*, 
N_s\rangle, \langle y_j^*, N_s\rangle] = (f_i^1f_j^1 + f_i^2f_j^2)\ud t, 
\end{equation}
and
\begin{equation}\label{eq:MN}
  \ud [\langle x_i^*, M_s\rangle, \langle y_j^*, N_s\rangle] = (f_i^1f_j^2 - 
f_i^2f_j^1)\ud t.
\end{equation}
Notice also that for each $x,y \in X$
\begin{equation}\label{eq:secondderivative}
 \begin{split}
  \frac{\partial^2}{\partial u^2} U(x + uf_1, y+uf_2)|_{u=0} &= 
\sum_{i,j=1}^d((U_{x_i^*,x_j^*}(x,y)f_i^1f_j^1 + 2U_{x_i^*,y_j^*}(x, 
y)f_i^1f_j^2\\ 
  &+ U_{y_i^*,y_j^*}(x, y) f_i^2f_j^2),\\
   \frac{\partial^2}{\partial u^2} U(x + uf_2, y-uf_1)|_{u=0} &=  
\frac{\partial^2}{\partial u^2} U(x - uf_2, y+uf_1)|_{u=0} 
\\=\sum_{i,j=1}^d((U_{x_i^*,x_j^*}(x,y)f_i^2f_j^2 &- 2U_{x_i^*,y_j^*}(x, 
y)f_i^2f_j^1
  + U_{y_i^*,y_j^*}(x, y) f_i^1f_j^1).
 \end{split}
\end{equation}
Therefore by \eqref{eq:Iexpression}, \eqref{eq:MMNN}, \eqref{eq:MN}, and 
\eqref{eq:secondderivative} we have that
\begin{align*}
I &= \int_0^t \sum_{i,j=1}^d((U_{x_i^*,x_j^*}(M_{s-}, N_{s-})(f_i^1f_j^1 + 
f_i^2f_j^2)+ 2U_{x_i^*,y_j^*}(M_{s-}, N_{s-})(f_i^1f_j^2 - f_i^2f_j^1) \\
&+ U_{y_i^*,y_j^*}(M_{s-}, N_{s-})(f_i^1f_j^1 + f_i^2f_j^2))\ud t = 
\int_0^t\frac{\partial^2}{\partial u^2} U(M_{s-} + uf_1, N_{s-}+uf_2)|_{u=0} \\ 
&+ \frac{\partial^2}{\partial u^2} U(M_{s-} + uf_2, N_{s-}-uf_1)|_{u=0} \ud s\\
&= \int_0^t\frac{\partial^2}{\partial u^2}\Bigl(U(M_{s-} + uf_1, N_{s-}+uf_2) 
+U(M_{s-} + uf_2, N_{s-}-uf_1)\Bigr)\Big|_{u=0}\ud s,
\end{align*}
and thanks to the concavity of $U(x + uf_1, y+uf_2) +U(x + uf_2, y-uf_1)$ in 
point $u=0$ for each $x,y \in X$ one deduces that a.s.\ $I\leq 0$. Then thanks 
to \eqref{eq:ineq+itoform} one has that
\begin{equation}\label{eq:N_tM_tC_pbeta_p,X}
 \mathbb E\|N_t\|_X^p - (C_p\beta_{p, X})^p\mathbb E\|M_t\|_X^p\leq \mathbb E 
U(M_t, N_t) \leq 0.
\end{equation}

Now one can prove that \eqref{eq:N_tM_tC_pbeta_p,X} implies \eqref{eq:lindepend} 
in the same way as it was done for instance in \cite[Theorem 3]{Gar85}, 
\cite[p.592]{BanWang95} or \cite[Chapter 3]{Burk01}.
\end{proof}

\begin{remark}\label{rem:C_p}
 Note that if $X$ is a finite dimensional Hilbert space, then one gets condition 
(iii) in Theorem \ref{thm:C_p} for free from \cite{Wang}. Indeed, let $U:X 
\times X \to \mathbb R$ be as in \cite[p.\,527]{Wang}, namely
 \[
U(x, y) = p(1-1/p^*)^{p-1}(\|y\|-(p^*-1)\|x\|)(\|x\|+\|y\|)^{p-1},\;\;\; x, y\in 
X.  
 \]
Then $U$ is a.s.\ twice Fr\'echet differentiable, and thanks to the property (c) 
of $U$, which is given on \cite[p.\,527]{Wang}, for all nonzero $x, y\in X$ 
there exists a constant $c(x, y)\geq 0$ such that
 \begin{align*}
   \langle \partial_{xx} U(x,y),(h,h)\rangle &+ 2\langle \partial_{xy} 
U(x,y),(h,k)\rangle+\langle \partial_{yy}U(x,y),(k,k)\rangle\\
   &\leq -c(x,y)(\|h\|^2-\|k\|^2),\;\;\; h,k \in X.
 \end{align*}
Therefore for any $z_1, z_2 \in X$
\begin{align*}
 \frac{\partial^2}{\partial t^2} \bigl[U(x&+tz_1, y+tz_2)+ U(x+ tz_2, 
y-tz_1)\bigr]\Big|_{t=0}\\
 &= \langle \partial_{xx} U(x,y),(z_1,z_1)\rangle + 2\langle  \partial_{xy} 
U(x,y),(z_1,z_2)\rangle+\langle \partial_{yy}U(x,y),(z_2,z_2)\rangle\\
 &\quad+\langle \partial_{xx} U(x,y),(z_2,z_2)\rangle - 2\langle  \partial_{xy} 
U(x,y),(z_2,z_1)\rangle+\langle \partial_{yy}U(x,y),(z_1,z_1)\rangle\\\
 &\leq -c(x,y)(\|z_1\|^2-\|z_2\|^2)-c(x,y)(\|z_2\|^2-\|z_1\|^2)= 0.
\end{align*}
\end{remark}

\subsection{General conjecture}

By Theorem \ref{thm:C_p} the estimate \eqref{eq:lindepend} is a direct 
corollary of the following conjecture.
\begin{conj}\label{thm:main}                                           
 Let $X$ be a finite dimensional Banach space, $p\in (1,\infty)$. Then there 
exists $C_p\geq 1$ such that for each pair of continuous martingales 
$M,N:\mathbb R_+\times \Omega \to X$ such that $N$ is weakly differentially 
subordinated to $M$ one has that for each $t\geq 0$
 \begin{equation}\label{eq:main}
  (\mathbb E \|N_t\|^p)^{\frac 1p}\leq C_p\beta_{p,X}(\mathbb E 
\|M_t\|^p)^{\frac 1p}.
 \end{equation}
\end{conj}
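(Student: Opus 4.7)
\smallskip

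\noindent\emph{Proof proposal.} My plan is to follow the template of Theorem \ref{thm:weakdiffsubproc} but with the full It\^o formula of Theorem \ref{thm:itoformula} in place of its purely discontinuous version, and to reduce matters to the existence of a suitable Burkholder function, precisely of the type required by Theorem \ref{thm:C_p}.

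First I would normalize the setting. By the approximation and enlargement techniques already used in the proofs of Theorem \ref{thm:forseq} and Lemma \ref{lemma:trulyabscontmart}, I may assume that $(M_t, N_t)$ has a density with respect to Lebesgue measure on $X\times X$ for each $t>0$ and that $M_0, N_0$ are nonzero a.s. By a standard representation theorem for continuous $X$-valued martingales, on a sufficiently rich filtration one has $M_t = M_0 + \int_0^t\phi_s\ud B_s$ for a cylindrical Brownian motion $B$ on a Hilbert space $H$ and a predictable $\phi_s\in\mathcal L(H, X)$. Writing similarly $N_t = N_0 + \int_0^t \psi_s\ud B_s$, the weak subordination hypothesis $[\langle M,x^*\rangle]\geq[\langle N,x^*\rangle]$ for every $x^*\in X^*$ reads $\|\psi_s^* x^*\|_H \leq \|\phi_s^* x^*\|_H$ and, by Douglas' range-inclusion lemma, is equivalent to the existence of a predictable $A_s\in\mathcal L(X^*)$ with $\|A_s\|\leq 1$ such that $\psi_s = \phi_s A_s^*$.

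The core of the argument would then run as follows. Suppose we have a Burkholder function $U\in C^2(X\times X)$ satisfying $U(x,y)\geq\|y\|^p - (C_p\beta_{p,X})^p\|x\|^p$, the homogeneity $U(\alpha x, \alpha y) = |\alpha|^p U(x,y)$, and the Hessian condition that for every $(x,y)\in X\times X$, every contraction $A:X^*\to X^*$, and every $\eta\in X$,
\[
\langle \partial_{xx}U(x,y),(\eta,\eta)\rangle + 2\langle \partial_{xy}U(x,y),(\eta, A^*\eta)\rangle + \langle \partial_{yy}U(x,y),(A^*\eta, A^*\eta)\rangle \leq 0.
\]
Applying It\^o's formula (Theorem \ref{thm:itoformula}) to $U(M_t, N_t)$ then splits into three contributions: (i) the $\ud M$- and $\ud N$-integrals, which by Lemma \ref{lem:V_xV_yarenice} and Lemma \ref{lemma:stochintmoment} form a true martingale starting at zero; (ii) the continuous second-order drift, which is exactly the displayed quadratic form evaluated at the ``columns'' $\eta = \phi_s^* x_i^*$ of $\phi_s^*$ in some basis of $X^*$, integrated against $\ud s$, and therefore a.s.\ nonpositive; (iii) the jump terms, which vanish by continuity of $M$ and $N$. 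Combined with $\mathbb E U(M_0, N_0)\leq 0$ — which follows from Remark \ref{rem:propsofU}, since weak subordination at time $0$ yields $|\langle N_0, x^*\rangle|\leq|\langle M_0, x^*\rangle|$ for all $x^*$ — this gives $\mathbb E\|N_t\|^p - (C_p\beta_{p,X})^p \mathbb E\|M_t\|^p \leq \mathbb E U(M_t, N_t)\leq 0$.

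The hard part is clearly the construction of such a $U$ on a general finite-dimensional UMD space. The variational candidate \eqref{eq:formulaofU} yields a $U$ having only the ``$|\varepsilon|\leq 1$'' zigzag-concavity, which suffices for the discrete and the purely discontinuous settings but not for the genuinely two-parameter Hessian condition required above. In the Hilbert case the extra condition comes for free from Wang's explicit formula (Remark \ref{rem:C_p}); in an arbitrary UMD space it is, via Theorem \ref{thm:C_p}, essentially equivalent to the long-standing conjectural linear estimate \eqref{eq:lindepend} for $\mathcal H_X$. I would therefore expect the whole substance of Conjecture \ref{thm:main} to sit in producing such a $U$ — plausibly by refining \eqref{eq:formulaofU} so that the admissible class consists of continuous-time, $\mathcal L(X^*)$-contraction-multiplier pairs rather than merely scalar sign sequences $(\varepsilon_n)$.
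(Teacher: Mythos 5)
The statement you are addressing is stated in the paper as Conjecture \ref{thm:main}: the paper contains no proof of it, only the weaker bound with constant $\beta_{p,X}^2$ from \cite{Y17MartDec} and the special cases in Theorems \ref{thm:selfadjmatrixsquarebound} and \ref{thm:onedimint}. Your proposal correctly identifies the intended strategy (It\^o's formula applied to a Burkholder function whose second-order term is nonpositive in the relevant directions, in the spirit of Theorem \ref{thm:C_p}), and you are candid that constructing such a $U$ is the whole difficulty; but this makes your argument a conditional reduction, not a proof. The missing ingredient --- a twice differentiable Burkholder function on a general finite-dimensional UMD space with the required second-order concavity --- is exactly what is open: the variational formula \eqref{eq:formulaofU} only yields concavity along directions $(z,\eps z)$ with $|\eps|\le 1$, which controls jump terms (as in Theorem \ref{thm:weakdiffsubproc}) but says nothing about the continuous quadratic-variation drift, and outside the Hilbert setting of Remark \ref{rem:C_p} no substitute is known.

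There is also a flaw in the reduction itself. Writing $M=M_0+\int\phi\,\dd B$ and $N=N_0+\int\psi\,\dd B$ for an $H$-cylindrical Brownian motion $B$, weak differential subordination gives the row-wise bound $\|\psi_s^*x^*\|_H\le\|\phi_s^*x^*\|_H$ for all $x^*\in X^*$, equivalently $\psi_s=\phi_s C_s$ for a predictable contraction $C_s\in\mathcal L(H)$ (not $\phi_sA_s^*$ with $A_s\in\mathcal L(X^*)$; that composition does not typecheck). The It\^o drift is $\tfrac12\sum_k\partial_u^2U(x+u\,\phi_se_k,\,y+u\,\psi_se_k)\big|_{u=0}$ summed over an orthonormal basis $(e_k)$ of $H$, so the relevant directions are the $X$-valued columns $\phi_se_k$ and $\psi_se_k=\phi_sC_se_k$, not the $H$-valued vectors $\phi_s^*x_i^*$. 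Your Hessian hypothesis amounts to $\partial_u^2U(x+u\eta,y+u\zeta)|_{u=0}\le0$ whenever $\|\zeta\|_X\le\|\eta\|_X$, and this controls the $k$-th summand only if $\|\psi_se_k\|_X\le\|\phi_se_k\|_X$; that column-wise domination does not follow from the row-wise one (take $\phi_s$ the identity from Euclidean $\mathbb R^2$ to $\ell^2_1$ and $C_s$ a rotation). What is actually needed is concavity of $t\mapsto\sum_kU(x+th_k,y+tg_k)$ for all finite families satisfying $\sum_k|\langle g_k,x^*\rangle|^2\le\sum_k|\langle h_k,x^*\rangle|^2$ for every $x^*$ --- an aggregate condition which in the Hilbert case follows from Wang's estimate by summing in the Hilbert--Schmidt norm, but which in a general UMD space is a further strengthening of the (already open) hypothesis of Theorem \ref{thm:C_p}. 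So both the key object and the precise concavity requirement are missing.
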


\begin{remark}
 Notice that in \cite{Y17MartDec} the estimate \eqref{eq:main} is proven with the constant $\beta_{p, X}^2$ instead of $C_p\beta_{p,X}$. Moreover, it is shown in \cite{Y17MartDec} that $C_p$ can not be less then 1.
\end{remark}

We wish to finish by pointing out some particular cases in which 
Conjecture~\ref{thm:main} holds. These results are about stochastic integration 
with respect to a Wiener process. Recall that we assume that $X$ is a finite 
dimensional space. Later we will need a couple of definitions.

Let $W^H:\mathbb R_+ \times H \to L^2(\Omega)$ be an {\it $H$-cylindrical Brownian motion}, i.e.
\begin{itemize}
 \item $(W^Hh_1, \ldots, W^Hh_d):\mathbb R_+ \times \Omega \to\mathbb R^d$ is a 
$d$-dimensional Wiener process for all $d\geq 1$ and  $h_1,\ldots, h_d \in H$,
 \item $\mathbb E W^H(t)h\,W^H(s)g = \langle h,g\rangle \min\{t,s\}$
 $\forall h,g \in H$, $t,s \geq 0$.
\end{itemize}
(We refer the reader to \cite[Chapter~4.1]{DPZ} for further details). Let $X$ be a Banach space, $\Phi:\mathbb R_+ \times \Omega \to 
\mathcal L(H,X)$ be elementary progressive of the form \eqref{eq:elprog}. Then 
we define a {\it stochastic integral} $\Phi \cdot W^H:\mathbb R_+ \times \Omega 
\to X$ of $\Phi$ with respect to $W^H$ in the following way:
\begin{equation*}
 (\Phi \cdot W^H)_t = \sum_{k=1}^K\sum_{m=1}^M \mathbf 1_{B_{mk}}
\sum_{n=1}^N (W^H(t_k\wedge t)h_n- W^H(t_{k-1}\wedge t)h_n)x_{kmn},\;\; t\geq 0.
\end{equation*}

The following lemma is a multidimensional variant of \cite[(3.2.19)]{KS} and it 
is closely connected with Lemma \ref{lemma:KunWat}.
\begin{lemma}\label{lemma:covstochintwrtcylbrmo}
 Let $X = \mathbb R$, $\Phi, \Psi:\mathbb R_+ \times \Omega \to \mathcal L(H, 
\mathbb R)$ be elementary progressive. Then for all $t\geq 0$ a.s.
 \[
  [\Phi \cdot W^H, \Psi \cdot W^H]_t = \int_0^t \langle \Phi^*(s), \Psi^*(s) 
\rangle\ud s.
 \]
\end{lemma}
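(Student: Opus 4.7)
The plan is to reduce both integrands to a common elementary form and then evaluate each side of the identity by direct computation. First I would align the representations: by refining partitions and relabeling if needed, I would arrange $\Phi$ and $\Psi$ to be based on the same partition $0 = t_0 < \dots < t_K$ and the same adapted sets $B_{mk}\in\mathcal F_{t_{k-1}}$. For the orthogonal family in $H$, I would pick an orthogonal basis $(h_n)_{n=1}^N$ of the (finite-dimensional) linear span of all $H$-vectors appearing in either expansion, applying Gram--Schmidt if necessary. Expanding the scalar coefficients in this common basis yields
\[
\Phi(s,\omega)=\sum_{k,m,n}\mathbf 1_{(t_{k-1},t_k]\times B_{mk}}(s,\omega)\,a_{kmn}\,h_n^*, \qquad
\Psi(s,\omega)=\sum_{k,m,n}\mathbf 1_{(t_{k-1},t_k]\times B_{mk}}(s,\omega)\,b_{kmn}\,h_n^*,
\]
where $h_n^*:=\langle\cdot,h_n\rangle_H\in\mathcal L(H,\mathbb R)$, so that $\Phi^*(s,\omega)=\sum_{k,m,n}\mathbf 1_{(t_{k-1},t_k]\times B_{mk}}\,a_{kmn}\,h_n$ in $H$ and similarly for $\Psi^*$.

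For the right-hand side, orthogonality of $(h_n)$ collapses the inner product to a diagonal sum, so after integrating,
\[
\int_0^t\langle\Phi^*(s),\Psi^*(s)\rangle\ud s = \sum_{k,m,n}\mathbf 1_{B_{mk}}\,a_{kmn}\,b_{kmn}\,\|h_n\|^2\,(t_k\wedge t-t_{k-1}\wedge t).
\]
For the left-hand side I would expand $\Phi\cdot W^H$ and $\Psi\cdot W^H$ via the defining formula for the stochastic integral in terms of the increments $W^H(t_k\wedge t)h_n - W^H(t_{k-1}\wedge t)h_n$. The orthogonality of $(h_n)$ ensures that the scalar martingales $(W^H(\cdot)h_n)_n$ have pairwise covariations $[W^Hh_n,W^Hh_m]_s=\langle h_n,h_m\rangle s = \delta_{nm}\|h_n\|^2 s$. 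Combined with bilinearity of the covariation, the fact that each $\mathbf 1_{B_{mk}}$ is $\mathcal F_{t_{k-1}}$-measurable (so the indicators factor out of the covariation contribution on the interval $(t_{k-1},t_k]$), and the vanishing of cross-terms between disjoint time intervals, the left-hand side collapses to exactly the same diagonal sum.

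The only mildly delicate point is the first alignment step: justifying that both $\Phi$ and $\Psi$ can indeed be rewritten simultaneously with a common partition, common sets $B_{mk}$, and a common orthogonal family $(h_n)$ without altering the values of $\Phi$, $\Psi$, their adjoints, or the two stochastic integrals. This is routine but tedious bookkeeping (refine partitions, take a common enumeration of the finitely many $B_{mk}$'s allowing some to be empty, and use Gram--Schmidt). Once it is in place, the rest is a direct and almost mechanical verification.
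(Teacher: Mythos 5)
Your computation is correct, and it is exactly the routine verification the paper has in mind: the paper itself offers no proof of this lemma, merely remarking that it is a multidimensional variant of the scalar identity \cite[(3.2.19)]{KS}, so your reduction to a common partition, common $\mathcal F_{t_{k-1}}$-measurable sets, and a common orthogonal family, followed by the diagonal collapse of both sides via $[W^Hh_n,W^Hh_m]_s=\langle h_n,h_m\rangle s$, is the intended argument. The bookkeeping step you flag is indeed routine (the value of the elementary stochastic integral is independent of the chosen representation by linearity of $h\mapsto W^Hh$), so there is no gap.
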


The reader can find more on stochastic integration with respect to an 
$H$-cylindrical Brownian motion in the UMD case in \cite{NVW}.

\begin{theorem}\label{thm:selfadjmatrixsquarebound}
 Let $X$ be a finite dimensional Banach space, $W^H$ be an $H$-cylindrical 
Brownian motion, $\Phi:\mathbb R_+ \times \Omega \to \mathcal L(H,X)$ be 
stochastically integrable with respect to $W^H$ function. Let $A\in \mathcal 
L(H)$ be self-adjoint. Then
 \begin{equation}\label{eq:selfadjmatrixsquarebound}
  (\mathbb E\|((\Phi A)  \cdot W^H)_{\infty}\|_X^p)^{\frac 1p} \leq \beta_{p, 
X}\|A\|(\mathbb E\|(\Phi \cdot W^H)_{\infty}\|_X^p)^{\frac 1p}.
 \end{equation}
\end{theorem}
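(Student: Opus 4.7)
The plan is to adapt the Burkholder-function argument used in the proofs of Theorem \ref{thm:weakdiffsubproc} and Theorem \ref{thm:C_p} to this stochastic-integral setting, exploiting the self-adjointness of $A$ in place of any new assumption on the Burkholder function. By $p$-homogeneity I may reduce to the case $\|A\|=1$, and by a standard truncation (so that both sides are finite) and an approximation analogous to Lemma \ref{lemma:trulyabscontmart} I may assume the continuous $X$-valued martingales $M := \Phi\cdot W^H$ and $N := (\Phi A)\cdot W^H$ have absolutely continuous joint law at every time, so that the Burkholder function $U$ of Theorem \ref{thm:Burkholder} (mollified if needed) can be fed into the It\^o formula (Theorem \ref{thm:itoformula}) applied to $U(M_t,N_t)$.

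Because $M$ and $N$ are continuous there are no jump terms; the first-order stochastic integrals $\int_0^t\langle \partial_x U(M_s,N_s),\dd M_s\rangle + \int_0^t\langle \partial_y U(M_s,N_s),\dd N_s\rangle$ are genuine mean-zero martingales thanks to Lemma \ref{lem:V_xV_yarenice} and Lemma \ref{lemma:stochintmoment}. What remains is the continuous quadratic-variation contribution: with $v_i(s) := \Phi(s)^*x_i^*$ and using Lemma \ref{lemma:covstochintwrtcylbrmo} together with $A^* = A$, the three covariations are $\langle v_i,v_j\rangle_H\,\dd s$, $\langle v_i,Av_j\rangle_H\,\dd s$ and $\langle Av_i,Av_j\rangle_H\,\dd s$, so the integrand of the second-order term equals
\[
\tfrac{1}{2}\sum_{i,j=1}^{d}\Bigl[U_{x_i,x_j}\langle v_i,v_j\rangle_H + 2U_{x_i,y_j}\langle v_i,Av_j\rangle_H + U_{y_i,y_j}\langle Av_i,Av_j\rangle_H\Bigr](M_s,N_s).
\]
The crux is to show this is pointwise $\le 0$.

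Here I use self-adjointness of $A$: by the spectral theorem $A$ is a norm-limit of finite spectral sums $\sum_k \lambda_k P_k$ with $|\lambda_k|\le 1$ and mutually orthogonal spectral projections $P_k$, and it suffices to prove the bound for such $A$ and then pass to the limit in operator norm. Decompose $v_i = \sum_k v_i^{(k)}$ with $v_i^{(k)}\in\mathrm{range}(P_k)$; orthogonality of the $P_kH$ kills all cross terms between distinct $k$, and within the $k$-th block the three inner products collapse to $1,\lambda_k,\lambda_k^2$ times the common Gram matrix $(\langle v_i^{(k)},v_j^{(k)}\rangle)_{i,j}$. Factoring this Gram matrix as $\sum_m q_{im}^{(k)}q_{jm}^{(k)}$ and setting $q_m^{(k)} := \sum_i q_{im}^{(k)}x_i\in X$, the $k$-th spectral block contributes
\[
\sum_m \frac{\dd^2}{\dd t^2}\Big|_{t=0}\!U\bigl(M_s + tq_m^{(k)},\,N_s + \lambda_k t q_m^{(k)}\bigr),
\]
each summand of which is $\le 0$ by the $|\eps|\le 1$ zigzag-concavity of $U$ (Theorem \ref{thm:Burkholder}) specialised to $\eps = \lambda_k$.

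Combining these steps, It\^o yields $\mathbb E\,U(M_t,N_t)\le \mathbb E\,U(0,0)=0$ (the last equality by Remark \ref{rem:propsofU}), and the lower bound $U(x,y)\ge\|y\|^p-\beta_{p,X}^p\|x\|^p$ then gives $\mathbb E\|N_t\|^p\le \beta_{p,X}^p\,\mathbb E\|M_t\|^p$; $L^p$-martingale convergence lets $t\to\infty$, and rescaling $A$ to arbitrary norm produces the claimed inequality. The main obstacle is the spectral/concavity step above: it is precisely the self-adjointness of $A$ that makes the cross-covariation $\langle v_i,Av_j\rangle$ appear as the linear-in-$\lambda_k$ coefficient of a perfect square that is itself a one-dimensional section of the zigzag-concave $U$. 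For non-self-adjoint $A$ this reduction breaks down, which is why Proposition \ref{thm:01-10matrixsquarebound} (where the transform is anti-symmetric) has to fall back on double decoupling and pay a factor $\beta_{p,X}^2$.
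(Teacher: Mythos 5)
Your argument is correct, but it is genuinely different from the one in the paper. The paper's proof is much softer: it reduces to elementary progressive $\Phi$ and to $A$ of finite rank via $A_n=P_nAP_n$, diagonalizes $A=\sum_n\lambda_n h_n\otimes h_n$ in a new orthonormal basis, writes $(\Phi A)\cdot W^H=\sum_n\lambda_n(\Phi h_n)\cdot W^H(h_n)$, and then invokes the discrete martingale transform theorem \cite[Theorem~4.2.25]{HNVW1} for the independent Brownian motions $W^H(h_n)$ with the constant multipliers $\lambda_n/\|A\|$; no Burkholder function or It\^o formula appears at all. Your route instead runs the second-order It\^o/Burkholder-function machinery of Theorems \ref{thm:weakdiffsubproc} and \ref{thm:C_p}, and the spectral-block computation showing that self-adjointness diagonalizes the second-order quadratic form into rank-one directions $(q,\lambda_k q)$ with $|\lambda_k|\le\|A\|$ --- each controlled by the $|\eps|\le 1$ zigzag-concavity of Theorem \ref{thm:Burkholder} --- is correct and is a genuinely illuminating explanation of why the self-adjoint case costs only one factor of $\beta_{p,X}$ while the antisymmetric case (Proposition \ref{thm:01-10matrixsquarebound}) does not. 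What your approach costs is the regularity of $U$: the paper's $U$ is only continuous, and the paper systematically arranges its proofs (first-order concavity inequalities in the discrete case, purely discontinuous martingales in the continuous-time case) precisely to avoid second-order It\^o terms; Theorem \ref{thm:C_p} is stated only conditionally on a twice differentiable Burkholder function for this reason. Your "mollified if needed" therefore carries real weight and should be spelled out: one must check that convolving $U$ with a smooth kernel preserves zigzag-concavity (it does, being an average of translates), approximately preserves the minorant $\|y\|^p-\beta_{p,X}^p\|x\|^p$ and the gradient bounds of Lemma \ref{lem:V_xV_yarenice}, and that the errors vanish as the mollification parameter tends to zero. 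Since your argument needs only the zigzag-concavity that $U$ genuinely possesses (and not the unproved "orthogonal pairs" condition of Theorem \ref{thm:C_p}), this mollification does go through, so there is no gap --- but the paper's diagonalize-and-transform proof buys the same conclusion with a fraction of the technical overhead, whereas yours buys a structural explanation that extends naturally to the weak-differential-subordination viewpoint.
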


 Notice that by Lemma \ref{lemma:covstochintwrtcylbrmo} for each $x^* \in 
X^*$ and $ 0\leq s<t<\infty$ a.s.
\begin{align*}
 [\langle (\Phi A)  \cdot W^H, x^*\rangle]_t -  [\langle (\Phi A)  \cdot W^H, 
x^*\rangle]_s  &= \int_s^t \|A \Phi^*(r) x^*\|^2\ud r\\
  &\leq \|A\|^2\int_s^t \|\Phi^*(r) x^*\|^2\ud r\\
 &= \|A\|^2\bigl([\langle \Phi  \cdot W^H, x^*\rangle]_t-  [\langle \Phi   \cdot 
W^H, x^*\rangle]_s\bigr).
\end{align*}
Hence if $\|A\|\leq 1$, then $(\Phi A)  \cdot W^H$ is weakly differentially 
subordinated to $\Phi  \cdot W^H$, and therefore Theorem 
\ref{thm:selfadjmatrixsquarebound} provides us with a special case of Conjecture 
\ref{thm:main}.

\begin{proof}[Proof of Theorem \ref{thm:selfadjmatrixsquarebound}]
Due to \cite[Theorem 3.6]{NVW} it is enough to show 
\eqref{eq:selfadjmatrixsquarebound} for elementary progressive process $\Phi$. Let 
$(h_n)_{n\geq 1}$ be an orthogonal basis of $H$, and let $\Phi$ be of the form 
\eqref{eq:elprog}. For each $n\geq 1$ we define $P_n\in \mathcal L(H)$ as an 
orthonormal projection onto $\text{span}(h_1,\ldots,h_n)$, and set $A_n:= P_n A 
P_n$. Notice that $\|A_n\|\leq \|A\|$. Then
\begin{equation*}
 \|((\Phi A - \Phi A_n)  \cdot W^H)_{\infty}\|_{L^p(\Omega; X)} \to 0,\;\;\;n\to 
\infty,
\end{equation*}
 so it is sufficient to prove \eqref{eq:selfadjmatrixsquarebound} for $A$ with a 
finite dimensional range, and we can suppose that there exists $d\geq 1$ such 
that $\text{ran }A\subset\text{span}(h_1,\ldots,h_d)$. This implies that $A$ is 
compact self-adjoint, so we can change the first $d$ vectors $h_1,\ldots,h_d$ of 
the orthonormal basis in such a way that $A = \sum_{n=1}^d \lambda_n h_n\otimes 
h_n$, where $(\lambda_n)_{n\geq 1}$ is a real-valued sequence. Without loss of generality we can assume that $|\lambda_1|\geq \cdots \geq |\lambda_d|$ and $|\lambda_1| = \|A\|$. Notice that under this change of coordinates  
$\Phi$ remains elementary progressive (perhaps of a different form).  
Therefore by the martingale transform theorem \cite[Theorem~4.2.25]{HNVW1}:
\begin{align*}
 \mathbb E\|((\Phi A)  \cdot W^H)_{\infty}\|_X^p &= \mathbb 
E\|\sum_{n=1}^d((\Phi Ah_n)  \cdot W^H(h_n))_{\infty}\|_X^p\\
 &=\|A\|^p\mathbb E\Bigl\|\sum_{n=1}^d\frac{\lambda_n}{\|A\|}((\Phi h_n)  \cdot 
W^H(h_n))_{\infty}\Bigr\|_X^p\\
 &\leq \beta_{p,X}^p\|A\|^p\mathbb E\Bigl\|\sum_{n=1}^N((\Phi h_n)  \cdot 
W^H(h_n))_{\infty}\Bigr\|_X^p.
\end{align*}
The last inequality holds because of structure of $\Phi$ so that one can rewrite 
$(\Phi h_n)  \cdot W^H(h_n)$ as a summation in time, and because 
$(W^H(h_n))_{n\geq 1}$ is a sequence of independent Wiener processes.
\end{proof}

\begin{remark}
 Theorem \ref{thm:selfadjmatrixsquarebound} in fact can be shown using 
\cite[Proposition 3.7.(i)]{GM-SS}.
\end{remark}

\begin{remark}
An analogue of Theorem \ref{thm:selfadjmatrixsquarebound} for antisymmetric $A$ 
(i.e.\ $A$ such that $A^* = -A$) remains open. It is important for instance for 
the possible estimate \eqref{eq:lindepend}. Indeed, in Proposition 
\ref{thm:01-10matrixsquarebound} the Hilbert space $H$ can be taken 
2-dimensional, $A = \bigl(\begin{smallmatrix}
0 & -1\\1 & 0 \end{smallmatrix}\bigr)$, and $\Phi :\mathbb R_+ \times \Omega \to 
\mathcal L(H, X)$ is such that $\Phi \bigl(\begin{smallmatrix}
a\\b \end{smallmatrix}\bigr) = af_1 + bf_2$ for each $a, b\in \mathbb R$. Then 
$M = \Phi \cdot W^H$, $N = (\Phi A)\cdot W^H$, and if one shows 
\eqref{eq:selfadjmatrixsquarebound} for an antisymmetric operator $A$, then one 
automatically gains~\eqref{eq:lindepend}.
\end{remark}

The next theorem shows that Conjecture \ref{thm:main} holds for stochastic 
integrals with respect to a one-dimensional Wiener process.

\begin{theorem}\label{thm:onedimint}
 Let $X$ be a finite dimensional Banach space, $W:\mathbb R_+ \times \Omega \to 
\mathbb R$ be a one-dimensional Wiener process, $\Phi, \Psi:\mathbb R_+ \times 
\Omega \to X$ be stochastically integrable with respect to $W$, $M=\Phi \cdot 
W$, $N=\Psi\cdot W$. Let $N$ be weakly differentially subordinated to $M$. Then 
for each $p\in (1,\infty)$,
 \begin{equation}\label{eq:onedimint}
  \mathbb E\|N_{\infty}\|^p\leq \beta_{p,X}^p\mathbb E\|M_{\infty}\|^p.
 \end{equation}
\end{theorem}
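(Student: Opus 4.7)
The plan is to reduce the statement to the discrete martingale transform estimate given in Theorem \ref{thm:forseq} by exploiting the fact that, because $W$ is one-dimensional, weak differential subordination forces a pointwise multiplicative relation between the integrands $\Psi$ and $\Phi$.

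\textbf{Step 1 (pointwise structure of $\Psi$).} By Lemma \ref{lemma:covstochintwrtcylbrmo} (with $H=\mathbb R$), for every $x^*\in X^*$ and all $0\le s<t$,
\[
 [\langle M,x^*\rangle]_t-[\langle M,x^*\rangle]_s=\int_s^t|\langle \Phi_r,x^*\rangle|^2\ud r,\quad
 [\langle N,x^*\rangle]_t-[\langle N,x^*\rangle]_s=\int_s^t|\langle \Psi_r,x^*\rangle|^2\ud r.
\]
The weak differential subordination hypothesis thus gives $|\langle \Psi_r,x^*\rangle|\le|\langle \Phi_r,x^*\rangle|$ for $d\mathbb P\otimes\ud r$-a.e.\ $(r,\omega)$ and every $x^*\in X^*$. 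Applying Lemma~\ref{lemma:ajustformeasfunc} on the product space $\mathbb R_+\times\Omega$ equipped with the predictable $\sigma$-algebra, I obtain a predictable scalar process $a$ with $|a|\le 1$ such that $\Psi=a\Phi$ up to a null set.

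\textbf{Step 2 (discrete approximation).} For $T>0$ fixed, take a sequence of deterministic partitions $0=t_0^n<t_1^n<\cdots<t_{k_n}^n=T$ whose mesh tends to zero. Define the step integrands $\Phi^n_s:=\Phi_{t^n_{k-1}}$ and $\Psi^n_s:=a_{t^n_{k-1}}\Phi_{t^n_{k-1}}$ on $(t^n_{k-1},t^n_k]$, and the corresponding martingales $M^n,N^n$. Setting $\alpha_k:=a_{t^n_{k-1}}$, the increments satisfy $dN^n_k=\alpha_k\,dM^n_k$ with $|\alpha_k|\le 1$ and $\alpha_k$ being $\mathcal F_{t^n_{k-1}}$-measurable. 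By Proposition~\ref{thm:discweakdiffsub} this means the discrete martingale $(N^n_{t^n_k})_k$ is weakly differentially subordinated to $(M^n_{t^n_k})_k$, so Theorem~\ref{thm:forseq} yields
\[
 \mathbb E\|N^n_T\|^p\le \beta^p_{p,X}\,\mathbb E\|M^n_T\|^p.
\]

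\textbf{Step 3 (passage to the limit).} Since $X$ is a UMD space, by the vector-valued stochastic integration theory of Veraar--van Neerven--Weis (used in the proof of Theorem~\ref{thm:selfadjmatrixsquarebound}) the step approximants converge in $L^p(\Omega;X)$: $M^n_T\to M_T$ and $N^n_T\to N_T$. Passing to the limit in the above inequality gives the bound for finite $T$. Finally, by the UMD-valued martingale convergence theorem, $M_T\to M_\infty$ and $N_T\to N_\infty$ in $L^p(\Omega;X)$ as $T\to\infty$ (assuming the right-hand side is finite; otherwise the inequality is trivial), which yields \eqref{eq:onedimint}.

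\textbf{Main obstacle.} The chief technical point is Step~3: proving $L^p(\Omega;X)$-convergence of the discretised stochastic integrals for a general UMD space $X$ requires an appeal to the theory of stochastic integration in UMD Banach spaces (e.g.\ \cite{NVW}), since one loses access to the scalar It\^o isometry. A secondary, more cosmetic point is to ensure that the scalar multiplier $a$ from Lemma~\ref{lemma:ajustformeasfunc} can be chosen predictable; this is done by running the proof of that lemma on $(\mathbb R_+\times\Omega,\mathcal P)$ with $\mathcal P$ the predictable $\sigma$-algebra, using that both $\Phi$ and $\Psi$ are predictable by hypothesis.
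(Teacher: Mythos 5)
Your overall strategy --- extract a scalar process $a$ with $|a|\le 1$ and $\Psi=a\Phi$ from Lemma \ref{lemma:ajustformeasfunc}, reduce to a discrete martingale transform estimate, and pass to the limit --- is exactly the route the paper takes, and Step 1 is fine (the paper settles for progressive measurability of $a$, which suffices).

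The genuine gap is in Step 2. For a general stochastically integrable $\Phi$ the sampled step process $\Phi^n_s:=\Phi_{t^n_{k-1}}$ on $(t^n_{k-1},t^n_k]$ is not a legitimate approximation: $\Phi$ and $a$ are only determined up to $\ud t\otimes \ud \mathbb P$-null sets, so their values at fixed deterministic times are not well defined, and even for a fixed measurable version there is no reason that $\Phi^n\to\Phi$ in $L^2(\Omega\times[0,T];X)$ (a rapidly oscillating deterministic integrand already defeats this). Consequently the convergence $M^n_T\to M_T$ claimed in Step 3 cannot be justified as stated. The repair is standard and is what the paper does: choose elementary progressive $\Phi_n\to\Phi$ in $L^2(\Omega\times[0,T];X)$ and elementary progressive $a_n\to a$ a.s.\ with $|a_n|\le 1$, observe via the triangle inequality that $a_n\Phi_n\to\Psi$ in $L^2(\Omega\times[0,T];X)$, apply the discrete estimate to the pair $(\Phi_n, a_n\Phi_n)$, and let $n\to\infty$. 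Two smaller remarks: since your transforming sequence is predictable with respect to the partition filtration, the classical martingale transform theorem \cite[Theorem 4.2.25]{HNVW1} already yields the constant $\beta_{p,X}$ --- this is what the paper invokes, so the full strength of Theorem \ref{thm:forseq} is not needed here; and since this section assumes $X$ finite dimensional, the appeal to UMD-valued stochastic integration in Step 3 is superfluous (the paper instead assumes without loss of generality that $\Phi$ and $\Psi$ vanish after a finite time $T$, which disposes of the limit $T\to\infty$).
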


\begin{proof}
 Without loss of generality suppose that there exists $T\geq 0$ such that 
$\Phi\mathbf 1_{[T,\infty]} = \Psi\mathbf 1_{[T,\infty]} = 0$. Since $N$ is 
weakly differentially subordinated to $M$, by the It\^o isomorphism for each 
$x^* \in X^*$, $0\leq s<t<\infty$ we have~a.s.
 \begin{align*}
    [\langle x^*,N\rangle]_t - [\langle x^*,N\rangle]_s &= \int_{s}^t|\langle 
x^*, \Psi(r)\rangle|^2\ud r\\
    &\leq\int_{s}^t|\langle x^*, \Phi(r)\rangle|^2\ud r =  [\langle 
x^*,M\rangle]_t - [\langle x^*,M\rangle]_s.
 \end{align*}
Therefore we can deduce that $|\langle x^*, \Psi\rangle| \leq |\langle x^*, 
\Phi\rangle|$ a.s.\ on $\mathbb R_+ \times \Omega$. By 
Lemma~\ref{lemma:ajustformeasfunc} there exists progressively measurable 
$a:\mathbb R_+ \times \Omega \to \mathbb R$ such that $|a|\leq 1$ on $\mathbb 
R_+ \times \Omega$ and $\Psi = a\Phi$ a.s.\ on $\mathbb R_+ \times \Omega$. Now 
for each $n\geq 1$ set $a_n:\mathbb R_+ \times \Omega \to \mathbb R$, 
$\Phi_n:\mathbb R_+ \times \Omega \to X$ be elementary progressively measurable 
such that $|a_n|\leq 1$, $a_n \to a$ a.s.\ on $\mathbb R_+ \times \Omega$ and 
$\mathbb E\int_0^T \|\Phi(t)-\Phi_n(t)\|^2\ud t\to 0$ as $n\to\infty$. Then by 
the triangle inequality
\begin{equation}\label{eq:Psi-a_nPhi_n}
 \begin{split}
  \Bigl(\mathbb E\int_0^T \|\Psi(t)-a_n(t)\Phi_n(t)\|^2\ud t\Bigr)^{\frac 
12}&\leq  \Bigl(\mathbb E\int_0^T \|\Phi(t)\|^2(a(t)-a_n(t))^2\ud t\Bigr)^{\frac 
12}\\ 
  &\quad+  \Bigl(\mathbb E\int_0^T \|\Phi(t)-\Phi_n(t)\|^2a_n^2\ud 
t\Bigr)^{\frac 12},
 \end{split}
\end{equation}
which vanishes as $n\to \infty$ by the dominated convergence theorem. For each 
$n\geq 1$ the inequality
\[
  (\mathbb E\|((a_n\Phi_n) \cdot W)_{\infty}\|^p)^{\frac 1p}\leq 
\beta_{p,X}(\mathbb E\|(\Phi_n \cdot W)_{\infty}\|^p)^{\frac 1p}
\]
holds thanks to the martingale transform theorem \cite[Theorem 4.2.25]{HNVW1}. 
Then~\eqref{eq:onedimint} follows from the previous estimate and 
\eqref{eq:Psi-a_nPhi_n} when one lets $n$ go to infinity.
\end{proof}
\begin{remark}
 Let $W$ be a one-dimensional Wiener process, $\mathbb F$ be a filtration which 
is generated by $W$. Let $M,N:\mathbb R_+ \times \Omega \to X$ be $\mathbb 
F$-martingales such that $M_0=N_0=0$ and $N$ is weakly differentially 
subordinated to $M$. Then thanks to the It\^o isomorphism \cite[Theorem 
3.5]{NVW} there exist progressively measurable $\Phi, \Psi:\mathbb R_+ \times 
\Omega \to X$ such that $M = \Phi \cdot W$and $N=\Psi \cdot W$, and thanks to 
Theorem \ref{thm:onedimint}
 \[
  \mathbb E\|N_{\infty}\|^p \leq \beta_{p,X}^p\mathbb E \|M_{\infty}\|^p,\;\;\; 
p\in(1,\infty).
 \]
 This shows that on certain probability spaces the estimate \eqref{eq:main} 
automatically holds with a constant $C_p=1$.
\end{remark}

\bibliographystyle{plain}

\def\cprime{$'$} \def\polhk#1{\setbox0=\hbox{#1}{\ooalign{\hidewidth
  \lower1.5ex\hbox{`}\hidewidth\crcr\unhbox0}}}
  \def\polhk#1{\setbox0=\hbox{#1}{\ooalign{\hidewidth
  \lower1.5ex\hbox{`}\hidewidth\crcr\unhbox0}}} \def\cprime{$'$}

\end{document}